\theoremstyle{plain}
\newtheorem{theorem}{Theorem}
\newtheorem{proposition}[theorem]{Proposition}
\newtheorem{lemma}[theorem]{Lemma}
\newtheorem{corollary}[theorem]{Corollary}
\newtheorem{claim}[theorem]{Claim}
\newtheorem{example}{Example}
\newtheorem{problem}{Problem}
\newtheorem{obs}{Observation}
\theoremstyle{remark}
\theoremstyle{definition}
\newcommand{\N}{{\mathbb N}}
\newcommand{\Prob}{\mathbb{P}}
\newcommand{\E}{\mathbb{E}}
\newcommand{\referee}[1]{\textcolor{red}{\textbf{[~Referee Comment:} #1 ~\textbf{]}}}
\begin{document}
	
\title{Creating Subgraphs in Semi-Random Hypergraph Games}
\author{
Natalie Behague\thanks{Mathematics Institute, University of Warwick, Coventry, UK; e-mail: \texttt{natalie.behague@warwick.ac.uk}},
Pawe\l{} Pra\l{}at\thanks{Department of Mathematics, Toronto Metropolitan University, Toronto, ON, Canada; e-mail: \texttt{pralat@torontomu.ca}},
Andrzej Ruci\'nski\thanks{Faculty of Mathematics and Computer Science, Adam Mickiewicz University, Pozna\'n, Poland; e-mail: \texttt{rucinski@amu.edu.pl}}
}

\maketitle

\begin{abstract}
The semi-random hypergraph process is a natural generalisation of the semi-random graph process, which can be thought of as a one player game. For fixed $r < s$, starting with an empty hypergraph on $n$ vertices, in each round a set of $r$ vertices $U$ is presented to the player independently and uniformly at random. The player then selects a set of $s-r$ vertices $V$ and adds the hyperedge $U \cup V$ to the $s$-uniform hypergraph. For a fixed (monotone) increasing graph property, the player's objective is to force the graph to satisfy this property with high probability in as few rounds as possible.

We focus on the case where the player's objective is to construct a subgraph isomorphic to an arbitrary, fixed hypergraph $H$. In the case $r=1$ the threshold for the number of rounds required was already known in terms of the degeneracy of $H$. In the case $2 \le r < s$, we give upper and lower bounds on this threshold for general $H$, and find further improved upper bounds for cliques in particular. We identify cases where the upper and lower bounds match. We also demonstrate that the lower bounds are not always tight by finding exact thresholds for various paths and cycles.
\end{abstract}

\section{Introduction\label{sec:intro}}
In this paper, we consider a hypergraph generalization of the \textbf{semi-random graph process} suggested by Peleg Michaeli (see \cite{beneliezer2020fast} and \cite[Acknowledgements]{beneliezer2019semirandom}) and studied recently in \cite{beneliezer2019semirandom,beneliezer2020fast,gao2022fully,gao2022perfect,frieze2022hamilton,gao2020hamilton,BMPR,ham_cycles_preprint,gamarnik2023cliques,molloy2023matchings} that can be viewed as a ``one player game''. Such a generalization was first proposed in~\cite{BMPR} and also studied in~\cite{molloy2023matchings}.

The semi-random process on hypergraphs, $(G^{(r,s)}_t)_t$, is defined as follows. Fix integers $r \ge 1$ to be the number of randomly selected vertices per step, and $s > r$ to be the uniformity of the hypergraph. The process starts from $G^{(r,s)}_0$, the empty hypergraph on the vertex set $[n]:=\{1,2,\dots,n\}$, where $n \ge s$ (throughout, we often suppress the dependence on $n$). In each step $t\geq 1$, a set $U_t$ of $r$ vertices is chosen uniformly at random from $[n]$. Then, the player replies by selecting a set of $s-r$ vertices $V_t$, and ultimately the edge $e_t:=U_t \cup V_t$ is added to $G^{(r,s)}_{t-1}$ to form $G^{(r,s)}_{t }$. In order for the process to be well defined, we allow parallel edges. For instance, they are necessary if an $r$-element set $U$ has been chosen more than $\binom{n-r}{s-r}$ times. 

Note that the resulting hypergraph is $s$-uniform, or shortly an \textbf{$s$-graph}. If $r=1$ and $s=2$, then this is the semi-random graph process. Further, if we allowed the degenerate case $r=s$ (that is, the player chooses $V_t=\emptyset$ for all $t$), then $G_t^{(r,r)}=(U_1,\dots,U_t)$ would be just a uniform random $r$-graph process with $t$ edges selected with repetitions.

To avoid ambiguity in using the notions of uniform hypergraph and uniform distributions, we will use the synonym \emph{equiprobable} for the latter.

Let us mention briefly some other variants of the semi-random process. In~\cite{macrury2022sharp}, sharp thresholds were studied for a more general class of processes that includes the semi-random process. In~\cite{burova2022semi}, a random spanning tree of $K_n$ is presented, and the player keeps one of the edges. In~\cite{gilboa2021semi}, vertices are presented by the process in a random permutation. In~\cite{Harjas}, the process presents $k$ random vertices, and to create an edge the player selects one of them, and freely chooses a second vertex.

The goal of the player is to build an $s$-graph $G_t^{(r,s)}$ satisfying a given monotone property $\mathcal{P}$ as quickly as possible. To make it more precise we define the notions of a strategy and a threshold.


A \textbf{strategy} $\mathcal{S}$ of the player consists, for each $n \ge s$, of a sequence of functions $(f_{t})_{t=1}^{\infty}$, where for each $t \in \N$, $V_t:=f_t(U_1,V_1,\ldots, U_{t-1},V_{t-1},U_t)\in\binom{[n]}{s-r}$. Thus, the player's response, $V_t$,   is fully determined by $U_1,V_1, \ldots ,U_{t-1},V_{t-1},U_t$, that is, by the history of the process up until step $t-1$, and by the random set $U_t$ chosen at step $t$. Given $t:=t(n)$, let $G_{t}^{(r,s)}[\mathcal{S}]$ be  the sequence of semi-random (multi)-$s$-graphs obtained by following  strategy $\mathcal{S}$ for $t$ rounds; we shorten $G_{t}^{(r,s)}[\mathcal{S}]$  to $G_{t}^{(r,s)}$ when clear.

\medskip

Throughout the paper we write $a_n\gg b_n$ if $b_n=o(a_n)$, and say that an event holds \emph{asymptotically almost surely} (\emph{a.a.s.}) if it holds with probability tending to one as $n \to \infty$.
 For  a monotonically increasing property $\mathcal{P}$ of $s$-graphs, we say that a function $\tau^{(r)}_{\mathcal{P}}(n)$ is \emph{a threshold for $\mathcal{P}$} if the following two conditions hold:
\begin{itemize}
\item[(a)] there exists a strategy $\mathcal{S}$ such that if  $t:=t(n)\gg \tau^{(r)}_{\mathcal{P}}(n)$, then a.a.s.\
$G^{(r,s)}_t \in \mathcal{P},\;$

\item[(b)] for every strategy~$\mathcal{S}$, if $t:=t(n)=o( \tau^{(r)}_{\mathcal{P}}(n))$, then a.a.s.\
$G^{(r,s)}_t \not\in \mathcal{P}$.
\end{itemize}
 Observe that $\tau^{(r)}_{\mathcal{P}}(n)\ge\tau^{(r-1)}_{\mathcal{P}}(n)$ for all $r\ge2$. Indeed, one can couple the two games by always including one of the $r$ random vertices chosen in the $G^{(r,s)}_t$ process among the $s-(r-1)$ vertices selected by the player in the $G^{(r-1,s)}_t$ process.

\medskip

In~\cite{molloy2023matchings} it was shown for any $s\ge2$ and $r\in\{1,2\}$ that for both, $\mathcal{P}$ being the property of having a perfect matching and $\mathcal{P}$ being the property of having a loose Hamilton cycle, $\tau^{(r)}_{\mathcal{P}}(n)=n$ (in fact, the results are even sharper).

In this paper we focus on the problem of constructing a sub-$s$-graph of $G^{(r,s)}_t$ isomorphic to an arbitrary, \emph{fixed} $s$-graph $H$.
Let $\mathcal{P}_H$ be the property that $H \subseteq G^{(r,s)}_t$. We abbreviate $\tau^{(r)}_{\mathcal{P}_H}(n)$ to $\tau^{(r)}(H,n)$ and often suppress the dependence on $n$, writing simply~$\tau^{(r)}(H)$.

\medskip

It was proved by the authors and T. Marbach in~\cite{BMPR} that for $r=1$, that is, when just a single vertex is selected randomly at each step, the threshold $\tau^{(1)}(H)$ can be determined fully in terms of the degeneracy of $H$.
For a given $d \in \N$, a hypergraph $H$ is  \textbf{$d$-degenerate} if every sub-hypergraph $H'\subseteq H$ has minimum degree $\delta(H')\le d$. The \textbf{degeneracy} $d(H)$ of $H$ is the smallest value of $d$ for which $H$ is $d$-degenerate. Equivalently, $d(H)=\max_{H'\subseteq H}\delta(H')$, where $\delta(H)$ is the minimum vertex degree of a hypergraph $H$.

\begin{theorem}[Behague, Marbach, Pra{\l}at, Ruci\'{n}ski \cite{BMPR}]\label{thm:r=1}
	Let $s\ge2$ and $H$ be a fixed $s$-uniform hypergraph of degeneracy $d \in \N$. Then, $\tau^{(1)}(H) = n^{1-1/d}$.
\end{theorem}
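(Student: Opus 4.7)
The plan is to establish matching upper and lower bounds.

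\emph{Upper bound.} Assume $t(n) \gg n^{1-1/d}$. Fix a degeneracy ordering $v_1,\dots,v_h$ of $H$, so each $v_i$ lies in at most $d_i \le d$ \emph{back-edges} (edges of $H$ whose remaining vertices all belong to $\{v_1,\dots,v_{i-1}\}$). Partition $[n]$ into $h$ disjoint pools $V_1,\dots,V_h$ each of size $\lfloor n/h\rfloor$, and split the $t$ rounds into $h$ consecutive phases of length $\lfloor t/h\rfloor$. In phase $i$, whenever the random vertex $u_t \in V_i$ appears and this is the $k$-th draw of that particular $u_t$ in the phase with $k \le d_i$, the player responds with the $s-1$ images of the other vertices of the $k$-th back-edge of $v_i$, using the already-fixed images $\phi(v_1),\dots,\phi(v_{i-1})$; on all other rounds the player plays an arbitrary $(s-1)$-set. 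At the end of phase $i$, set $\phi(v_i)$ to be any $x \in V_i$ drawn at least $d_i$ times during that phase, which automatically makes all $d_i$ back-edges of $v_i$ present in the host. A routine first-moment estimate shows that the expected number of such candidate $x$ is $\Theta(t^{d_i}/n^{d_i-1}) = \omega(1)$ whenever $t \gg n^{1-1/d}$ and $d_i \le d$, and Chebyshev's inequality provides the needed concentration, so a valid choice exists a.a.s.\ in each phase.

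\emph{Lower bound.} Assume $t(n) = o(n^{1-1/d})$, and let $H^\ast \subseteq H$ achieve $\delta(H^\ast) = d$. First, the expected number of vertices of $[n]$ appearing at least $d$ times among $u_1,\dots,u_t$ is at most $n\binom{t}{d}n^{-d} = O(t^d/n^{d-1}) = o(1)$, so a.a.s.\ no vertex is drawn $d$ or more times. On this event, orient each host edge toward its random vertex; any copy of $H^\ast$ in the host inherits an orientation in which every vertex has out-degree at most $d-1$. Since $\delta(H^\ast) = d$, for each $v \in V(H^\ast)$ at least one of its $\ge d$ incident edges must be oriented away from $\phi(v)$, making $\phi(v)$ the chosen (player-committed) vertex rather than the random vertex of that edge. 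Such ``chain'' edges impose rigid constraints tying past random draws to earlier player-committed vertices, each contributing an extra factor of $1/n$ to the first-moment estimate; summing over embeddings, over admissible orientations, and over round-edge assignments shows the expected number of copies of $H^\ast$ in the host is $o(1)$.

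The main obstacle is the lower bound: the naive first-moment over embeddings alone yields only the weaker exponent $1-s/d$, and reaching the sharp exponent $1-1/d$ requires the refined accounting of online-strategy constraints via the chain-edge analysis. Verifying this accounting --- in particular, showing that at most one such chain per embedding is needed on the max-multiplicity event, and that the orientation-type sum does not swamp the gained $1/n$ factors --- is the delicate part of the argument.
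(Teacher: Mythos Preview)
This theorem is cited from~\cite{BMPR} and is not proved in the present paper, so there is no in-paper proof to compare against. I therefore evaluate your sketch on its own merits.

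Your \emph{upper bound} strategy is correct and is essentially the standard argument: build $H$ along a degeneracy ordering, keeping every vertex of the current pool as a live candidate for $\phi(v_i)$ and committing only after the phase ends. The first- and second-moment estimates you quote are routine.

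Your \emph{lower bound}, however, has a genuine gap. You correctly observe that for $t=o(n^{1-1/d})$ a.a.s.\ no vertex is drawn $d$ times, and that consequently in any embedded copy of $H^\ast$ every vertex is the random endpoint of at most $d-1$ edges. But the ``chain-edge'' accounting you allude to does not, as stated, recover the exponent $1-1/d$. Concretely, take $s=2$ and $H^\ast=K_4$, so $d=3$ and the target threshold is $n^{2/3}$. A first moment over triples (embedding, root-assignment, time-assignment) --- even restricted to root-assignments in which every vertex has root-degree at most $d-1=2$ --- yields at best $C=3$ ``costly'' $1/n$ factors out of six edges (e.g.\ process $12,13,14$ first with roots $1,3,4$ respectively, giving three fresh roots). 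The resulting bound is $\Theta(t^{6}/n^{3})$, which forces only $t=o(n^{1/2})$, not $t=o(n^{2/3})$. The additional structure you describe (``at most one chain per embedding on the max-multiplicity event'') is neither made precise nor shown to close this quantitative gap, and you yourself flag it as the delicate part. The sharp lower bound in~\cite{BMPR} requires a more refined argument than a union bound over signatures; your sketch does not supply that refinement.
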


Note that, in particular, for $s=2$ and any tree $T$ we have $d(T)=1$, and so $\tau^{(1)}(T)=1$. In fact, in this case one can easily show a stronger statement: there exists a strategy $\mathcal{S}$ such that a.a.s.\ $T \subseteq G^{(1,2)}_t[\mathcal{S}]$ for $t=|E(T)|$, as a.a.s.\ the first $t$ random vertices $u_1,\dots,u_t$, selected in the semi-random process $G^{(1,2)}_t$, are all distinct from each other, as well as, from a fixed vertex $u_0\in[n]$. On the other hand, for any (graph) cycle $C$, $d(C)=2$, yielding $\tau^{(1)}(C)=\sqrt n$ by Theorem~\ref{thm:r=1}.

A similar contrast takes place for $s>2$. A \emph{tight cycle} $C_m^{(s)}$ is an $s$-graph  with $m$ vertices and $m$ edges,  whose vertices can be ordered cyclically so that the edges are formed by the consecutive $s$-element segments in this ordering.
(E.g., the set of triples
 $123, 234, 345,456,567,671,712$ forms a copy of $C_7^{(3)}$ on $[7]$.)
 A \emph{tight path} $P_m^{(s)}$ is an $s$-graph  with $k=m+s-1$ vertices and $m$ edges,  whose vertices can be ordered linearly so that the edges are formed by the consecutive $s$-element segments in this ordering. Alternatively, it can be obtained from $C_{m+s-1}^{(s)}$ by removing $s-1$ consecutive edges, while keeping all vertices intact. (E.g., the set of triples
 $123, 234, 345,456,567$ forms a copy of $P_5^{(3)}$ on $[7]$.)
We have $d(P_m^{(s)})=1$, so $\tau^{(1)}(P_m^{(s)})=1$, while  $d(C_m^{(s)})=s$ and so $\tau^{(1)}(C_m^{(s)})=n^{1-1/s}$ (see Appendix, Claim~\ref{miu}).

\medskip

\section{New results}\label{or}

Our understanding of semi-hypergraph processes with $r \ge 2$ is far from complete. For property $\mathcal{P}_H$,  we can only prove a general lower  bound, show its optimality for certain classes of hypergraphs and  its suboptimality for others. We defer the proofs of these results to later sections. Throughout, for a hypergraph $H$, we will be using notation $v_H=|V(H)|$ and $e_H=|E(H)|$.

\subsection{Lower bound} Our general lower bound on $\tau^{(r)}(H)$, proved in Section~\ref{s3}, depends only on the number of vertices and edges of $H$ so, in a sense, it is also quite generic. Surprisingly, it provides the right answer for a broad class of $s$-graphs.

\begin{restatable}{theorem}{LBgeneral}
\label{thm:lower_bound_general}
	Let $k \ge s \ge r \ge 1$, and let $H$ be an $s$-graph with $k$ vertices and $m$ edges. Then, for every strategy $\mathcal{S}$, if  $t =o\left( n^{r-(k-s+r)/m}\right)$, then a.a.s.\ $G^{(r,s)}_t \not\in \mathcal{P}_H.$
 It follows that
 $$\tau^{(r)}(H) \ge n^{r-(k-s+r)/m}.$$
\end{restatable}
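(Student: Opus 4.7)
The plan is to prove the lower bound by a careful first-moment calculation on the number $X$ of labeled copies of $H$ in $G^{(r,s)}_t$. A naive first moment---summing over $n^k$ labelings and $t^m$ round assignments with per-round probability $\binom{s}{r}/\binom{n}{r} = O(n^{-r})$ that $U_{t'}$ lies inside the image edge---only yields $\tau^{(r)}(H) \ge n^{r-k/m}$, which is strictly weaker than the claim. The extra factor of $n^{s-r}$ that I need will be saved by exploiting that the player's response $V_t$ is a deterministic function of the history together with $U_t$.

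Concretely, I fix a reference edge $e^* \in E(H)$ and an $r$-subset $W^* \subset e^*$, and write $V^* = e^* \setminus W^*$, so $|V^*| = s-r$. For any copy $\phi$ of $H$, the image $\phi(e^*)$ is formed in some round $t^*$ as $U_{t^*} \cup V_{t^*}$; summing over the $\binom{s}{r}$ possibilities for which $r$-subset of $e^*$ plays the role of the random part, I may restrict attention to copies with $U_{t^*} = \phi(W^*)$ and $V_{t^*} = \phi(V^*)$. The key observation is that once $U_{t^*}$ equals the specific set $\phi(W^*)$, the player's response $V_{t^*}$ is forced, so $\phi|_{V^*}$ is constrained to be a bijection onto $V_{t^*}$---only $(s-r)!$ possibilities rather than $n^{s-r}$. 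I therefore parametrize copies by $\phi' = \phi|_{V(H) \setminus V^*}$, an injection from a set of size $k-s+r$, contributing at most $O(n^{k-s+r})$ to the sum.

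Combining the ingredients, $\Prob[U_{t^*} = \phi'(W^*)] = 1/\binom{n}{r} = O(n^{-r})$, and by a union bound over the at most $t^{m-1}$ assignments of the remaining $m-1$ edges of $H$ to distinct rounds (with per-round probability $O(n^{-r})$ that the random $r$-set lies inside the required image edge), the probability that each of the other edges of $\phi(H)$ is also formed is at most $O(t^{m-1} n^{-r(m-1)})$. Multiplying with $t$ choices of $t^*$ and $O(1)$ choices of $(e^*, W^*)$:
\[
\E[X] \;\le\; O(1) \cdot t \cdot n^{k-s+r} \cdot n^{-r} \cdot t^{m-1}\, n^{-r(m-1)} \;=\; O\bigl(t^m\, n^{k-s+r-rm}\bigr),
\]
which is $o(1)$ exactly when $t = o(n^{r - (k-s+r)/m})$, giving the required lower bound.

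The main delicacy will be handling the player's adaptive strategy: although $V_t$ can depend on the full history, the event $\{e_{t'} = \phi(e_i)\}$ is dominated by $\{U_{t'} \subseteq \phi(e_i)\}$, which depends only on the random $r$-set in round $t'$. Since $U_1, \ldots, U_t$ are mutually independent, the per-round probability bounds multiply cleanly even after conditioning on $U_{t^*}$, and this is what makes the estimate valid uniformly over all strategies.
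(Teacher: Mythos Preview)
Your overall strategy is sound and reaches the correct bound, but the independence step has a real gap. You fix $e^*$ in advance and then claim that for every other edge $e_i$ and every round $t'$, the event $\{U_{t'}\subseteq\phi(e_i)\}$ ``depends only on the random $r$-set in round $t'$.'' That is false as written: in your parametrization $\phi=\phi'\cup\sigma$ with $\sigma(V^*)=V_{t^*}$, so whenever $e_i\cap V^*\neq\emptyset$ the set $\phi(e_i)$ is a function of $V_{t^*}$, hence of the whole sequence $U_1,\dots,U_{t^*}$. If some $t_i<t^*$ this set can depend on $U_{t_i}$ itself. For a concrete failure, take $s=4$, $r=2$, $V^*=\{c,d\}\subset e^*$, another edge $e_1\supset V^*$, and a strategy with $V_{t^*}=U_1$; then for $t_1=1$ the event $\{U_{t_1}\subseteq\phi(e_1)\}$ holds with probability $1$, not $O(n^{-r})$, so your product bound collapses for that term.

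The fix is cheap: instead of fixing $e^*$, sum over which edge of $H$ is created \emph{first} in the process (an extra factor $m=O(1)$), so that $t^*=\min_e t_e$ and all remaining $t_i>t^*$. Conditioning on $U_1,\dots,U_{t^*}$ then determines $V_{t^*}$ and hence $\phi$, while each $U_{t_i}$ with $t_i>t^*$ is independent of this conditioning, giving the clean $\binom{s}{r}/\binom{n}{r}$ bound per remaining edge.

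For comparison, the paper avoids this issue entirely by a different bookkeeping. It does not count labeled copies of $H$; it lets $X_j(t)$ be the number of $k$-vertex sets $W$ spanning at least $j$ edges in $G_t^{(r,s)}$, and proves by induction on $j$ that $\E X_j(t)\le t^j k^{r(j-1)}n^{k-s+r-rj}$. The base case $X_1(t)\le t\,n^{k-s}$ is deterministic (each of the $t$ edges lies in at most $n^{k-s}$ sets of size $k$), and the inductive step uses only that a $k$-set $W$ acquires a new edge at time $i$ only if $U_i\subseteq W$; since $W$ is a fixed set, this event is independent of the history and has probability $\le (k/n)^r$. Because the target set $W$ never depends on the player's moves, the independence is immediate and no ``first-formed edge'' device is needed. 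Your route, once repaired, is a legitimate alternative and arguably more direct, but the paper's formulation is cleaner precisely because it never lets the random strategy contaminate the target sets.
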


\begin{example}\label{Ex1}\rmfamily
Let $H$ be a 3-graph consisting of 5 vertices $a,b,c,d,e$ and 5 edges made by all triples from $\{a,b,c,d\}$ plus $\{c,d,e\}$ (see Figure~\ref{fig:example1}). Then, with $r=2$, $s=3$, and $k=m=5$,
we get $\tau^{(2)}(H) \ge n^{6/5}$.
\end{example}

\begin{figure}[h]
	\centering
	\includegraphics[scale=.6]{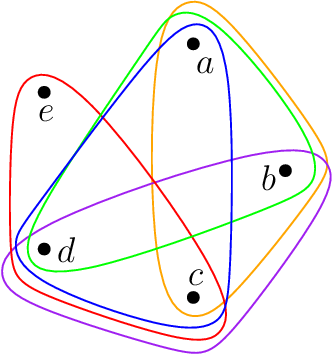}
	\caption{The $3$-graph $H$ described in Example~\ref{Ex1}.}
	\label{fig:example1}
\end{figure}

Of course, it might happen that the main ``bottleneck'' is not the original hypergraph $H$ as a whole, but one of its sub-hypergraphs $H' \subseteq H$. Trivially, creating  $H$ requires creating $H'$ in the first place, so we immediately obtain the following corollary. For $2\le r\le s$ and an $s$-graph $H$ with at least $s$ vertices, define
	$$f^{(r)}(H)=\frac {e_H}{v_H-s+r}\quad\mbox{and}\quad\mu^{(r)}(H)=\max_{H' \subseteq H,\ v_{H'}\ge s}f^{(r)}(H').$$
\begin{corollary}\label{cor:lower_bound}
	Let $1\le r\le s$ and $H$ be an  $s$-graph with at least $s$ vertices. Then,
	$$
	\tau^{(r)}(H) \ge n^{ r- 1/\mu^{(r)}(H)}.
	$$
\end{corollary}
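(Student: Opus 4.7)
The plan is to derive the corollary directly from Theorem~\ref{thm:lower_bound_general} via the trivial monotonicity of the property $\mathcal{P}_H$ with respect to sub-hypergraphs, so the ``proof'' is essentially an algebraic bookkeeping exercise.

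First I would record the monotonicity observation: if $H' \subseteq H$, then every $s$-graph containing $H$ also contains $H'$, so $\mathcal{P}_H \subseteq \mathcal{P}_{H'}$ as monotone properties. Consequently, any strategy that a.a.s.\ builds $H$ in $t$ rounds also a.a.s.\ builds $H'$ in $t$ rounds, while any lower bound preventing $H'$ from appearing also prevents $H$ from appearing. This yields $\tau^{(r)}(H) \ge \tau^{(r)}(H')$ for every $H' \subseteq H$.

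Next, for each sub-hypergraph $H' \subseteq H$ with $v_{H'} \ge s$, I would apply Theorem~\ref{thm:lower_bound_general} with $k := v_{H'}$ and $m := e_{H'}$ to get
$$\tau^{(r)}(H') \ge n^{\, r - (v_{H'}-s+r)/e_{H'}} \, = \, n^{\, r - 1/f^{(r)}(H')},$$
using the identity $(v_{H'}-s+r)/e_{H'} = 1/f^{(r)}(H')$ that comes directly from the definition of $f^{(r)}$.

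Finally, since $x \mapsto r - 1/x$ is monotonically increasing in $x > 0$, maximizing the exponent over admissible $H'$ is the same as maximizing $f^{(r)}(H')$, which by definition equals $\mu^{(r)}(H)$. Combining this with the monotonicity step gives
$$\tau^{(r)}(H) \ge \max_{H' \subseteq H,\ v_{H'} \ge s} n^{\, r - 1/f^{(r)}(H')} = n^{\, r - 1/\mu^{(r)}(H)},$$
as required. There is no genuine obstacle here: the only ``content'' in the corollary is noticing that the quantity $(k-s+r)/m$ appearing in Theorem~\ref{thm:lower_bound_general} is precisely $1/f^{(r)}$ and that the right bottleneck may be a proper sub-hypergraph.
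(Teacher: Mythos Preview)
Your proposal is correct and follows essentially the same approach as the paper, which derives the corollary immediately from Theorem~\ref{thm:lower_bound_general} via the observation that creating $H$ requires first creating every sub-hypergraph $H'\subseteq H$. You have simply spelled out in more detail what the paper summarizes in one sentence.
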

If $\mu^{(r)}(H)=f^{(r)}(H)$, then we call such an $H$ \textbf{$r$-balanced}. This is, for instance,  the case for the tight cycle $C_m^{s}$ (see Appendix, Claim~\ref{miu}). For $r$-balanced $H$, the bounds in Theorem~\ref{thm:lower_bound_general} and Corollary~\ref{cor:lower_bound} coincide. However, for non-$r$-balanced $H$ Corollary~\ref{cor:lower_bound} may give a significantly better lower bound on $\tau^{(r)}(H)$.

\begin{example}\label{Ex2}\rmfamily
Let $H$ be as in Example \ref{Ex1} (see Figure~\ref{fig:example1}) and $H'$ be the clique $K_4^{(3)}$ on vertices $a,b,c,d$. Then, $H' \subseteq H$ and $f^{(2)}(H')=4/3>5/4=f^{(2)}(H)$.  It is easy to see that $\mu^{(2)}(H)=4/3$ and, in fact, $\tau^{(2)}(H) \ge n^{2-3/4}=n^{5/4}$.
\end{example}

\noindent Note that, by considering a single edge as $H'$, we always have $\mu_H^{(r)}\ge 1/r$, and so the lower bound in Corollary \ref{cor:lower_bound} cannot be less than 1. However, for most hypergraphs $H$ we have $\mu^{(r)}(H) > 1/r$, giving us a nontrivial lower bound on $\tau^{(r)}(H)$.
Note also that in the special case when $r=s\ge2$, we are looking at the random $s$-graph (with repeated edges) and the bound in Corollary \ref{cor:lower_bound} corresponds to the  threshold for appearance of a copy of a given $s$-graph $H$ (see~\cite[Chapter 3]{JLR} for the case $r=s=2$ or~\cite{dewar2017subhypergraphs} for non-uniform hypergraphs, though neither model allows edge repetitions, as we do here).

Finally, let us mention that we included  the case $r=1$, already covered by Theorem~\ref{thm:r=1}, to emphasize the potential weakness  of this general lower bound. Indeed, note that $\mu^{(1)}(H)> d(H)$ unless $\mu^{(1)}(H)=d(H)=1$, so in most cases the bound in Corollary~\ref{cor:lower_bound} is weaker than the optimal bound in Theorem \ref{thm:r=1}. However, for some $s$-graphs it is optimal. For example,  when $H$ is the tight path $P_m^{(s)}$, we have $\mu^{(1)}(H)=d(H)=1$ and Theorem~\ref{thm:r=1} yields $\tau^{(1)}(H)=1$ (see the comment after Theorem~\ref{thm:r=1} above).

\subsection{Upper bounds which match lower bound}\label{match}

We now identify a class of $s$-graphs $H$ for which we are able to establish an upper bound on $\tau^{(r)}(H)$ which matches the lower bound in Theorem~\ref{thm:lower_bound_general}.

For integers $1\le c< s\le k$, a $k$-vertex $s$-graph $S$ is called a \textbf{$c$-star} if each of its edges contains a fixed vertex set $C$ of size $|C|=c$. The set $C$ is then called the \textbf{center} of the star and the $(s-c)$-graph $S_1:=\{e\setminus C:\;  e\in S\}$ is the \textbf{flower} of the star $S$. A $c$-star is \textbf{full} when it has all $\binom{k-c}{s-c}$ edges, that is, if its flower is the complete $(s-c)$-graph $K_{k-c}^{(s-c)}$. A full star will be denoted by $S_k^{(s,c)}$.

An \textbf{$(s,c)$-starplus with $\lambda_1$ rays and excess $\lambda_2$} is defined as an $s$-graph obtained from a $c$-star $S$ with $\lambda_1$ edges by arbitrarily adding to it $\lambda_2$ edges not containing $C$ (but not adding any new vertices). For $c>1$, the additional edges may intersect the center~$C$ (but not contain it).
 Call the $(s-c)$-subgraph $H_1:=\{e\setminus C:\; C\subseteq e\in H\}$,  the \textbf{flower} of the starplus $H$, and the $s$-graph $H_2$ consisting of the $\lambda_2$ excess edges of $H$ -- the \textbf{cap} of $H$. (Note that  $H_1=S_1$, the flower of $S$.)

\begin{example}\label{starplus}\rmfamily
Let $V(S)=\{a,b,c,d,e,f\}$ and $E(S)$ consist of all 4-tuples containing $C:=\{a,b\}$ and one pair from $\{c,d,e,f\}$ except $\{e,f\}$ (see Figure~\ref{fig:example3}). Then $S$ is a $2$-star, though not full (as the edge $\{a,b,e,f\}$ is missing and thus its flower $S_1=K_4-\{e,f\}$). By adding to $S$ three edges: $\{c,d,e,f\}, \{a,c,d,e\}$, and $\{b,c,e,f\}$, we obtain a $(4,2)$-starplus $H$ with 5 rays and  excess 3. Its flower is the graph $H_1=S_1=K_4-\{e,f\}$ on vertex set $\{c,d,e,f\}$, while its cap $H_2$ consists of the three 4-tuples we have added to $S$.
\end{example}

\begin{figure}[h]
	\centering
	\includegraphics[scale=.6]{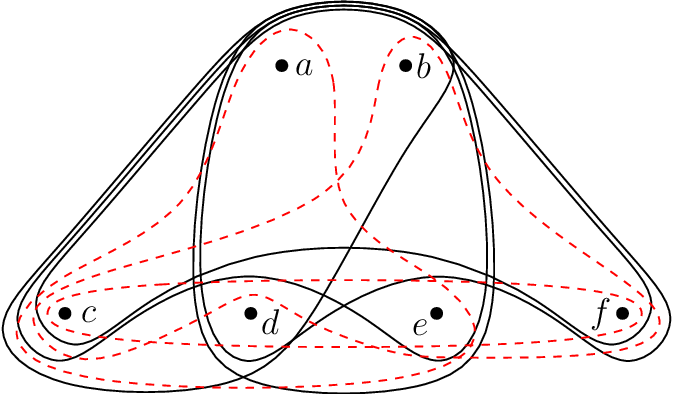}
	\caption{The $(4,2)$-starplus $H$ described in Example~\ref{starplus}, where the black solid edges are the edges of the $2$-star $S$ and the red dashed edges are the excess edges.}
	\label{fig:example3}
\end{figure}

For an $r$-graph $F$, $r\ge2$, let us define its \textbf{density}  $g(F)$ as $1/r$ if $e_F=1$, and  $\frac{e_F-1}{v_F-r}$ if $e_F>1$.  We call $F$ \textbf{edge-balanced} if all sub-$r$-graphs  $F'\subset F$ with $e_{F'}>0$ satisfy $g(F')\le g(F)$.
For starpluses  which are not too dense and whose flowers are edge-balanced, we can prove (see Section~\ref{s4}) the following.

 \begin{restatable}{theorem}{UBbalanced}
 \label{thm:upper_bound_balanced} For $r\ge2$ and $s>r$,
	let $H$ be an $(s,s-r)$-starplus on $k$ vertices with $\lambda_1$ rays and excess $\lambda_2$, such that
	\begin{equation}\label{eqn:ell-gen}
	\frac{\lambda_1+\lambda_2}{\lambda_1-1}\le\frac{k-s+r}{k-s},
	\end{equation}
and whose flower $H_1$ is edge-balanced.
	Then, there exists a strategy $\mathcal{S}$ such that, if
$t \gg n^{r-\frac{k-s+r}{\lambda_1+\lambda_2}}, $
then a.a.s.\ $G_t^{(r,s)} \in \mathcal{P}_H$.
	 Thus, combined with Theorem~\ref{thm:lower_bound_general},
	$$\tau^{(r)}(H)= n^{r-\frac{k-s+r}{\lambda_1+\lambda_2}} .$$
\end{restatable}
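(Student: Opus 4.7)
The plan is to exhibit an explicit two-phase strategy and, via first- and second-moment methods, count the number of copies of $H$ that it creates in $G_t^{(r,s)}$ at the claimed threshold.

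Fix a set $C \subseteq [n]$ with $|C| = s-r$ to serve as the intended image of the center of $H$. Split the $t$ rounds in half: Phase~1 uses $t_1 = \lfloor t/2 \rfloor$ rounds and Phase~2 uses $t_2 = \lceil t/2 \rceil$ rounds. In Phase~1, for each round with $U_t \cap C = \emptyset$ (which holds for all but $o(t)$ rounds), the player responds with $V_t = C$; this adds the $s$-edge $U_t \cup C$ through $C$, and the collection $R := \{U_t : t \leq t_1,\, U_t \cap C = \emptyset\}$ is essentially a uniform random $r$-graph on $[n] \setminus C$ with $\sim t_1$ i.i.d.\ edges. In Phase~2, on receiving $U_t$, the player enumerates all pairs $(W,e)$ where $W \subseteq [n] \setminus C$ with $|W|=k-s+r$ supports a labelled copy of $H_1$ in $R$ and $e$ is an excess edge of the resulting embedded copy of $H$ on $C \cup W$ satisfying $U_t \subseteq e$ and $e \notin G_{t-1}^{(r,s)}$; if any such $(W,e)$ exists she picks one (via a fixed tie-breaking rule) and plays $V_t = e \setminus U_t$.

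For the first moment, note that $H_1$ is edge-balanced with density $g(H_1) = (\lambda_1-1)/(k-s)$, and condition~\eqref{eqn:ell-gen} rearranges to $(k-s+r)/(\lambda_1+\lambda_2) \geq 1/g(H_1)$, giving $t_1 \gg n^{r - 1/g(H_1)}$. Standard first- and second-moment arguments for sub-$r$-graph containment then imply that a.a.s.\ $R$ contains $\Theta(n^{(k-s+r)\lambda_2/(\lambda_1+\lambda_2)})$ labelled copies of $H_1$. Let $X$ count labelled copies of $H$ in $G_t^{(r,s)}$ whose center embeds onto $C$. Summing over injections $\phi\colon V(H) \to [n]$ with $\phi(C_H) = C$ (of which there are $\asymp n^{k-s+r}$), and estimating the probability that all $\lambda_1$ rays $\phi(e)$ are built in Phase~1 ($\sim (t_1/n^r)^{\lambda_1}$, since each ray requires $U_{t'} = \phi(e \setminus C_H)$ exactly) and all $\lambda_2$ excess edges $\phi(e)$ are built in Phase~2 ($\sim (t_2/n^r)^{\lambda_2}$, modulo constants from tie-breaking) yields
\[
\mathbb{E}[X] \;\asymp\; n^{k-s+r}\left(\frac{t_1}{n^r}\right)^{\lambda_1}\left(\frac{t_2}{n^r}\right)^{\lambda_2} \;\longrightarrow\; \infty
\]
precisely when $t \gg n^{r - (k-s+r)/(\lambda_1+\lambda_2)}$.

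The main obstacle is the second-moment bound $\mathbb{E}[X^2] = (1+o(1))\mathbb{E}[X]^2$ needed to conclude $X \geq 1$ a.a.s.\ via Chebyshev. One partitions pairs of embeddings $(\phi_1, \phi_2)$ by the isomorphism type of their overlap sub-hypergraph $F := \phi_1(H) \cap \phi_2(H)$ and shows that each type's contribution is negligible. Overlaps consisting of rays only are controlled by the edge-balance of $H_1$, as in classical sub-hypergraph containment arguments. Overlaps involving excess edges are controlled by condition~\eqref{eqn:ell-gen}, which asserts $(\lambda_1+\lambda_2)/(k-s+r) \leq g(H_1)$, i.e.\ the overall density of $H$ does not exceed the flower-extension density; this ensures that no intermediate sub-hypergraph of $H$ containing excess edges has a density large enough to blow up the pairwise correlation term. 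Once the case analysis over intersection types is completed, Chebyshev's inequality yields $\Pr(X \geq 1) \to 1$, which together with Theorem~\ref{thm:lower_bound_general} pins down $\tau^{(r)}(H) = n^{r - (k-s+r)/(\lambda_1+\lambda_2)}$.
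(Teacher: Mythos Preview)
Your two-phase scaffold and Phase~1 analysis are on the right track, but Phase~2 has a genuine gap that ``modulo constants from tie-breaking'' does not cover. When $U_t$ arrives, it may sit inside excess edges of \emph{many} distinct embeddings $\phi$ simultaneously (any two copies of $H_1$ in $R$ sharing $r$ or more vertices outside $C$ can compete for the same $U_t$), yet your strategy builds only one edge. Since the number of copies of $H_1$ in $R$ is a growing power of $n$, the fraction of those copies that your fixed tie-breaking rule actually serves in a given round can be $o(1)$, so the estimate $\Pr(\text{all }\lambda_2\text{ excess edges of }\phi\text{ are built}) \sim (t_2/n^r)^{\lambda_2}$ is not justified and in general false. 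This undermines the first-moment lower bound for $X$, and your second-moment sketch (``once the case analysis over intersection types is completed'') never confronts the correlations introduced by the tie-breaking either.

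The paper's proof resolves exactly this difficulty by inserting a pruning step between the phases: from the copies of $H_1$ produced in Phase~1 it extracts (via Lemma~\ref{disj} on $D_F$ and a deletion argument) a family $\mathcal{H}$ of $\Theta(n^{k-s+r}p_1^{\lambda_1})$ copies that pairwise share fewer than $r$ vertices outside~$C$. For each such copy a designated multi-$r$-graph $M_i\subset\binom{V(H_1^{(i)})}{r}$ of size $\lambda_2$ is fixed in advance, and the $<r$ overlap guarantees the $M_i$ are pairwise disjoint, so the Phase~2 strategy is unambiguous and the events $\{M_i\subset R_{t_2}\}$ are essentially independent; the second moment on $Y=\sum_i\mathbf{1}[M_i\subset R_{t_2}]$ is then a one-line computation via~\eqref{multi}. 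A further point you miss is that in the boundary case of equality in~\eqref{eqn:ell-gen} one must take $t_1=t/\omega_1$ rather than $t/2$ (with $\omega_1$ a carefully chosen power of $\omega$) to keep $p_1=o(n^{-1/g(H_1)})$, which is what makes both the pruning step and the final $\E Y\to\infty$ go through simultaneously.
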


 It follows  that, in view of Corollary \ref{cor:lower_bound}, any $(s,s-r)$-starplus $H$ satisfying the assumptions of Theorem \ref{thm:upper_bound_balanced} is  $r$-balanced, a fact whose direct proof would be quite tedious (see Appendix, Proposition~\ref{starbal}, for a proof in the special case of full $(s,s-r)$-starplus defined prior to Corollary~\ref{thm:upper_bound_complete} below).

Note also that the assumptions of Theorem~\ref{thm:upper_bound_balanced} do  not impose any structural restrictions on the cap $H_2$. Therefore, once the flower $H_1$ of an $(s-r)$-star $S$ is edge-balanced and the parameters satisfy \eqref{eqn:ell-gen}, we can take any $s$-graph with $\lambda_2$ edges and $k$ vertices (edge-disjoint from $S$) as a cap, obtaining a whole family of  $(s,s-r)$-starpluses to which Theorem~\ref{thm:upper_bound_balanced} applies.

\medskip

\begin{example}\label{tc}\rmfamily
  One such class of  starpluses is defined in terms of  tight cycles.
 Every tight cycle is edge-balanced (see Appendix, Claim \ref{edge-bal}).
 Thus, every $(s,s-r)$-starplus $H$ on $k$ vertices whose flower is $H_1=C_{k-s+r}^{(r)}$ and whose cap $H_2$ has no more than $\frac{r-1}{k-s}(k-s+r)$ edges, satisfies the assumptions of Theorem \ref{thm:upper_bound_balanced}. In particular, for
\emph{the wheel} $H=W_k^{(s,s-r)}$  defined as an $(s,s-r)$-starplus $H$ with  $H_1=C_{k-s+r}^{(r)}$ and $H_2=C_{k-s+r}^{(s)}$, and for $k\le s+r-1$,  we have $\tau^{(r)}(H)= n^{r-1/2}$  (see Figure~\ref{fig:8wheel} for the wheel $W_8^{(5,1)}$ which satisfies the above assumptions with $s=5$ and $r=4$).

\begin{figure}
    \centering
    \includegraphics[scale=1]{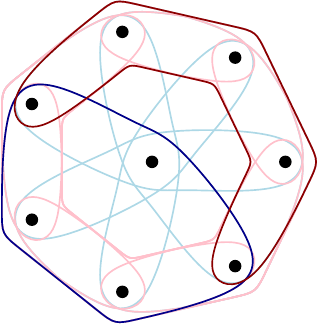}
    \caption{The wheel $W_8^{(5,1)}$, with the `ray' edges containing the centre in blue and the `excess' edges of the cap in red.}
    \label{fig:8wheel}
\end{figure}

\end{example}

\smallskip

Assumption \eqref{eqn:ell-gen} is quite restrictive, because its left-hand-side cannot be too large. It becomes more relaxed, though, when we enlarge $\lambda_1$. At the extreme, $\lambda_1$ can be as large as $\binom{k-s+r}r$. This leads to the following notion.

For integers $1\le c\le s\le k$, an $(s,c)$-starplus  $H$ with $\lambda_1$ rays and surplus $\lambda_2$ is called a \textbf{full $(s,c)$-starplus with excess $\lambda$}  if $H_1=K^{(s-c)}_{k-c}$, that is, the flower is a complete $(s-c)$-graph (and so $\lambda_1=\binom{k-c}r$), while $\lambda_2=\lambda$.  Alternatively, an $(s,c)$-starplus with excess $\lambda$ is an edge-disjoint  union of a full $c$-star $S_k^{(s,c)}$ and an $s$-graph $H_2$ with $\lambda$ edges and the same vertex set as the star. Note that, in this case, $H_1$ is edge-balanced (see Appendix, Claim~\ref{clicbal}) and thus, Theorem \ref{thm:upper_bound_balanced} immediately implies the following result.

\begin{corollary}\label{thm:upper_bound_complete} Let $r\ge2$ and $s>r$, and
	let $H$ be a full $(s,s-r)$-starplus on $k$ vertices with excess
	\begin{equation}\label{eqn:ell}
	\lambda\le\frac{r\binom{k-s+r}r-(k-s+r)}{k-s}.
	\end{equation}
	Then,
	$$\tau^{(r)}(H)= n^{r-\frac{k-s+r}{\binom{k-s+r}r+\lambda}} .$$
\end{corollary}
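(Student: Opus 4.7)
The plan is to derive Corollary~\ref{thm:upper_bound_complete} as a direct specialization of Theorem~\ref{thm:upper_bound_balanced}. A full $(s,s-r)$-starplus on $k$ vertices with excess $\lambda$ has, by definition, $\lambda_1 = \binom{k-s+r}{r}$ rays (since every $r$-subset of the $k-(s-r)$ non-center vertices extends the center $C$ to a ray) and $\lambda_2 = \lambda$. Its flower is $H_1 = K^{(r)}_{k-s+r}$. So the task reduces to checking two things: that the flower is edge-balanced, and that the structural inequality \eqref{eqn:ell-gen} of Theorem~\ref{thm:upper_bound_balanced} translates exactly into the hypothesis \eqref{eqn:ell}.

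For the first point, I would simply cite Claim~\ref{clicbal} in the Appendix, which asserts that complete $r$-graphs are edge-balanced; this is the entire purpose of that auxiliary claim.

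For the second point, I would carry out a short algebraic manipulation. Substituting $\lambda_1=\binom{k-s+r}{r}$ and $\lambda_2=\lambda$ into \eqref{eqn:ell-gen} and clearing denominators yields
\[
(k-s)\bigl(\tbinom{k-s+r}{r}+\lambda\bigr)\le (k-s+r)\bigl(\tbinom{k-s+r}{r}-1\bigr),
\]
which, after isolating $\lambda$, is equivalent to
\[
\lambda\le\frac{r\binom{k-s+r}{r}-(k-s+r)}{k-s},
\]
namely the assumption \eqref{eqn:ell}. Hence Theorem~\ref{thm:upper_bound_balanced} applies and provides a strategy producing $H$ a.a.s.\ whenever $t\gg n^{r-(k-s+r)/(\lambda_1+\lambda_2)}=n^{r-(k-s+r)/(\binom{k-s+r}{r}+\lambda)}$, which is the desired upper bound.

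Finally, to conclude $\tau^{(r)}(H)=n^{r-(k-s+r)/(\binom{k-s+r}{r}+\lambda)}$, I would invoke the matching lower bound from Theorem~\ref{thm:lower_bound_general}, noting that a full $(s,s-r)$-starplus with excess $\lambda$ has exactly $m=\binom{k-s+r}{r}+\lambda$ edges and $k$ vertices, so the exponent $r-(k-s+r)/m$ agrees verbatim. There is no real obstacle here beyond bookkeeping: the only content-bearing step is the reduction from \eqref{eqn:ell} to \eqref{eqn:ell-gen}, and the potential pitfall is making sure that the cap's edge count $\lambda_2$ is counted separately from the ray count $\lambda_1$, so that the total edge count $\lambda_1+\lambda_2$ used in the exponent correctly coincides with $e_H=m$.
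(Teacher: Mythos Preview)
Your proposal is correct and follows exactly the paper's approach: the paper simply observes that the flower $H_1=K^{(r)}_{k-s+r}$ is edge-balanced by Claim~\ref{clicbal} and that Theorem~\ref{thm:upper_bound_balanced} then immediately yields the result. Your write-up in fact supplies more detail than the paper does, explicitly verifying that \eqref{eqn:ell-gen} specializes to \eqref{eqn:ell} under $\lambda_1=\binom{k-s+r}{r}$ and $\lambda_2=\lambda$.
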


\begin{example}\label{e4}\rmfamily For $r=2$ and $s=3$ the upper bound on excess in \eqref{eqn:ell} is $k-1$, so Corollary~\ref{thm:upper_bound_complete} applies in this case to all $s$-graphs whose cap has the same number of edges and vertices. One example is the 3-uniform clique $K_5^{(3)}$ on 5 vertices which can be viewed as a $(3,1)$-starplus consisting of the full 1-star and  the cap forming a copy of $K_4^{(3)}$; so, $\tau^{(2)}(K_5^{(3)})= n^{2-\frac{4}{10}}=n^{8/5}$.

  Another  example, this time for $k=8$, is presented in Figure~\ref{fig:starplus} in Section~\ref{s4}. Here $H$ is the full $3$-uniform 1-star on 8 vertices topped  with the Fano plane; thus, $\tau^{(2)}(H)= n^{7/4}.$ Our last example is the full $(3,1)$-starplus on $k$ vertices whose cap $H_2$ is a tight cycle $C_{k-1}^{(3)}$; then,
$\tau^{(r)}(H)=n^{ 2-\tfrac{2}{k}}.$
 \end{example}

As hinted at in  Example~\ref{e4}, Corollary~\ref{thm:upper_bound_complete} can be sometimes applied to  complete $s$-graphs $K_k^{(s)}$.
Indeed, since cliques can be viewed as $(s,s-r)$-starpluses with excess  $\lambda=\binom ks-\binom{k-s+r}r$, assumption \eqref{eqn:ell}, for cliques, becomes
\begin{equation}\label{clique_starplus}
\binom{k}{s} \le \frac{k-s+r}{k-s}\left(\binom{k-s+r}{r} - 1 \right).
\end{equation}
Thus, in particular, Corollary \ref{thm:upper_bound_complete} covers cliques $K_{s+1}^{(s)}$, whenever $s\le r^2+r-1$, and cliques $K_{s+2}^{(s)}$, whenever $(s+2)(s+1)\le \tfrac12r(r+2)(r+3)$. For $r=2$ this covers the cliques $K^{(3)}_4, K^{(3)}_5, K^{(4)}_5, K^{(5)}_6$.
At the other extreme, when $r=s-1$, assumption \eqref{eqn:ell} for cliques becomes
\begin{equation}\label{in1}
\binom{k-1}{s} \le \frac{(s-1) \binom{k-1}{s-1}-(k-1)}{k - s},
 \end{equation}
 which holds whenever $k\le 2s-1$ (see Appendix, Claim~\ref{ineq}). So, in addition, Corollary~\ref{thm:upper_bound_complete} covers cliques   $K_7^{(4)}$ (for $r=3$),  $K_8^{(5)}$, $K_9^{(5)}$ (for $r=4$), and so on. The smallest case among cliques, not covered by Corollary~\ref{thm:upper_bound_complete},  is thus $K_6^{(3)}$ and $r=2$.\footnote{Recently, it was determined in~\cite{ShD} that $\tau^{(2)}(K_6^{(3)})
 =n^{16/9}$. }

\subsection{Upper bounds for general $s$-graphs and cliques}\label{2.3}

Corollary~\ref{thm:upper_bound_complete} can be used as a black box to derive a generic upper bound on $\tau^{(r)}(H)$ for any $H$, just in terms of its maximum degree and the number of edges. For $1\le d\le s$, let $\Delta_d(H)$ denote the maximum degree of a $d$-set of vertices of $H$, that is, the maximum number of edges that contain a given subset $D\subset V(G)$, $|D|=d$.

The following consequence of Corollary~\ref{thm:upper_bound_complete} has a very simple proof which, therefore, we present right after the statement.
Observe that the right-hand-side of~\eqref{eqn:ell} is an increasing function of $k$ (as $k=s$ is a root of the numerator viewed as a polynomial in $k$).

\begin{corollary}\label{cor:upper_bound} Let $s>r\ge2$ and let $H$ be an arbitrary $s$-graph. Further, let $k\ge v_H$ be the smallest integer for which \eqref{eqn:ell} holds with $\lambda:=e_H-\Delta_{s-r}(H)$. Then
	$$\tau^{(r)}(H)\le n^{r-\frac{k-s+r}{\binom{k-s+r}r+\lambda}} .$$
\end{corollary}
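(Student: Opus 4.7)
The plan is to embed $H$ as a subgraph of a suitably chosen full $(s,s-r)$-starplus $H^*$ on $k$ vertices and then invoke Corollary~\ref{thm:upper_bound_complete} for $H^*$ together with the obvious monotonicity of the threshold under taking subgraphs: $H\subseteq H^*$ implies $\tau^{(r)}(H)\le\tau^{(r)}(H^*)$, since any strategy that a.a.s.\ produces a copy of $H^*$ also produces a copy of $H$.

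First, fix a set $D\in\binom{V(H)}{s-r}$ attaining the $(s-r)$-degree maximum, so that $D$ is contained in exactly $\Delta_{s-r}(H)$ edges of $H$. Let $\lambda:=e_H-\Delta_{s-r}(H)$ be the number of edges of $H$ not containing $D$, and let $k$ be as in the statement, i.e., the smallest integer $k\ge v_H$ for which \eqref{eqn:ell} holds with this value of $\lambda$. Such a $k$ exists because the right-hand side of \eqref{eqn:ell} is an increasing function of $k$ that tends to infinity with $k$ (as it grows like $k^{r-1}$ for fixed $r\ge 2$), while $\lambda$ is a fixed number.

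Next, construct $H^*$ as the full $(s,s-r)$-starplus on $k$ vertices obtained as follows. Take the vertex set $V(H^*)=V(H)\cup W$, where $W$ is a set of $k-v_H$ new vertices. Designate the center $C:=D$ of size $s-r$, form the full $(s-r)$-star on $V(H^*)$ with center $C$ (i.e., include every $s$-set that contains $C$ and one further $r$-set from $V(H^*)\setminus C$), and let the cap $H^*_2$ consist of the $\lambda$ edges of $H$ that avoid $D$ (these are $s$-subsets of $V(H)\setminus D\subseteq V(H^*)\setminus C$, so they are legitimate cap edges). Since the number of edges of $H$ through $D$ is $\Delta_{s-r}(H)\le\binom{v_H-s+r}{r}\le\binom{k-s+r}{r}$, all such edges are contained in the full star, and by construction all other edges of $H$ are contained in the cap. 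Hence $H\subseteq H^*$.

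By the choice of $k$, the starplus $H^*$ has excess $\lambda$ satisfying \eqref{eqn:ell}, so Corollary~\ref{thm:upper_bound_complete} applies and yields
$$\tau^{(r)}(H^*)=n^{r-\frac{k-s+r}{\binom{k-s+r}{r}+\lambda}}.$$
Combining this with $H\subseteq H^*$ gives the claimed bound. There is essentially no obstacle here beyond packaging; the only point that requires verification is the existence of $k$ and the legality of the embedding (both routine), after which the result follows directly from the already-proved Corollary~\ref{thm:upper_bound_complete}.
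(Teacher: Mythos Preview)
The proposal is correct and follows essentially the same approach as the paper's own proof: pick the center to be an $(s-r)$-set of maximum degree, embed $H$ in a full $(s,s-r)$-starplus on $k$ vertices with the non-center edges of $H$ as the cap, apply Corollary~\ref{thm:upper_bound_complete}, and conclude by monotonicity.

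One small slip worth noting: you write that the cap edges ``are $s$-subsets of $V(H)\setminus D\subseteq V(H^*)\setminus C$.'' This is not true in general when $s-r>1$: an edge of $H$ that does not \emph{contain} $D$ may still intersect $D$. Fortunately this does not harm the argument, since by the definition of an $(s,c)$-starplus the excess edges are only required not to contain the center (and may intersect it when $c>1$); hence your $H^*$ is still a legitimate full $(s,s-r)$-starplus with excess $\lambda$, and the proof goes through unchanged once this parenthetical is corrected.
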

\proof Let $C\subset V(H)$, $|C|=s-r$, be a subset which achieves the maximum in the definition of $\Delta_{s-r}(H)$. Further, let $H'$ be the sub-$s$-graph of $H$ obtained by deleting all edges containing $C$ and let $\widehat H$ be the full $(s,s-r)$-starplus  on a $k$-vertex set containing $C$ and with the $\lambda$ surplus edges forming a copy of $H'$. (Alternatively, $\widehat H$ is obtained from $H$ by adding $k-v_H$ new vertices and $\binom{k-s+r}r-\Delta_{s-r}(H)$ new edges containing $C$.) See Figure~\ref{fig:cor_upper_bound} for an example.
As $\widehat H$ has excess $\lambda:=e_H-\Delta_{s-r}(H)$ satisfying \eqref{eqn:ell}, we may apply Corollary \ref{thm:upper_bound_complete} to it, obtaining the bound
$$\tau^{(r)}(\widehat H)\le n^{r-\frac{k-s+r}{\binom{k-s+r}r+\lambda}} .$$
 Clearly, $\widehat H\supseteq H$ and the statement follows by monotonicity.
 \begin{figure}
     \centering
     \includegraphics[width=.9\linewidth]{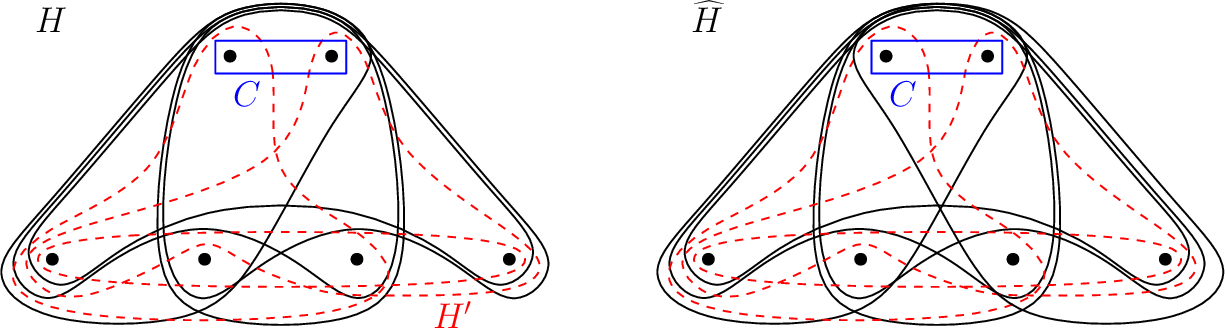}
     \caption{An example of the graphs $H'$ and $\widehat{H}$ constructed in the proof of Corollary~\ref{cor:upper_bound} for a given $H$. The red dashed edges represent the edges of $H'$.}
     \label{fig:cor_upper_bound}
 \end{figure}
 \qed

\medskip

The bound in Corollary~\ref{cor:upper_bound} is generally very weak, but its strength lies in its universality. In particular, it implies that  $\tau^{(r)}(H)=o(n^r)$ for \emph{all} $s$-graphs $H$. In some cases, however, it is not so bad.

\begin{example}\label{Ex4}\rmfamily
Consider the clique $H:=K_6^{(3)}$ and $r=2$. We have $e_H=20$ and $\Delta_1(H)=10$. So, we set $\lambda:=20-10=10$ and, remembering that the  right-hand-side of \eqref{eqn:ell} in this case is just $k-1$, apply Corollary~\ref{cor:upper_bound} with $k=11$. As a result, we obtain the bound $\tau^{(2)}(H)\le n^{2-\tfrac2{11}}$, not so far from the lower bound $n^{ 2-\tfrac14}$ established in Theorem~\ref{thm:lower_bound_general} and even closer to the correct bound $n^{ 2-\tfrac29}$ from~\cite{ShD}.
\end{example}

The ideas used in the proof of Theorem~\ref{thm:upper_bound_balanced} can be extended to cover families of $s$-graphs violating assumption \eqref{eqn:ell-gen}, but the obtained upper bounds do not match the lower bounds in Theorem~\ref{thm:lower_bound_general}. They are, however, better than those established in Corollary~\ref{cor:upper_bound}. In Section~\ref{sec:K_6} we prove such bounds for general cliques. (Note that this theorem is also true in the case $r=1$ yielding, however, a worse bound than the optimal Theorem~\ref{thm:r=1}.)

\begin{restatable}{theorem}{cliques}
\label{thm:general_cliques}
Given $2\le r<s\le k$, let $\ell:=\ell_k(r,s)$ be the smallest integer  such that
\begin{equation}\label{constraint}
k-\ell-r-\frac{k-\ell}{\binom ks-\binom{\ell}{s}}\sum_{j=1}^r\binom\ell{s-j}\left[\binom{k-\ell}j-\binom rj\right]\le0.
\end{equation}
Then there exists a strategy $\mathcal S$ such that for
$$t\gg n^{r-\frac{k-\ell}{\binom ks-\binom{\ell}s}}$$ a.a.s.\ $K_k^{(s)}\subset G_t^{(r,s)}$. Thus,
$$\tau^{(r)}(K_k^{(s)})\le n^{r-\frac{k-\ell}{\binom ks-\binom{\ell}s}}.$$
\end{restatable}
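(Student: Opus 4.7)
The plan is to design an online strategy that builds $K_k^{(s)}$ on an adaptively chosen $k$-vertex set $W = L \cup X$, where $L$ is a fixed arbitrary $\ell$-subset of $[n]$ and $X \subset [n] \setminus L$ of size $k-\ell$ is identified as the process unfolds. To organize the identification, I would split $X = Y \cup Z$ with $|Y|=r$ a distinguished ``pivot'' subset and $|Z|=k-\ell-r$ the remaining ``free'' vertices added one at a time. In each round, depending on the intersection pattern of $U_t$ with $L$, $Y$, $Z_{\mathrm{current}}$, and the rest of $[n]$, the player either (a) creates a not-yet-completed target edge of $K_k^{(s)}$ on $L \cup Y \cup Z_{\mathrm{current}}$ by choosing $V_t$ as the complementary $(s-r)$-subset, (b) adds a fresh vertex of $U_t$ to $Z$ while simultaneously creating an edge incident to the new vertex, or (c) wastes the round.

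To analyze this I would group the target edges by $j := |e \cap X| \in \{0,1,\ldots,s\}$ and further by whether $e\cap X \subseteq Y$ or $e\cap X$ meets $Z$. The quantity $\sum_{j=1}^{r}\binom{\ell}{s-j}[\binom{k-\ell}{j}-\binom{r}{j}]$ counts exactly the target edges with $1\le j\le r$ and $e\cap X\not\subseteq Y$, which are precisely those that ``extend into $Z$'' and must be serviced by rounds whose $U_t$ provides a $Z$-contribution. Rewriting \eqref{constraint} as
\begin{equation*}
(k-\ell-r)\Big(\binom{k}{s}-\binom{\ell}{s}\Big)\le(k-\ell)\sum_{j=1}^{r}\binom{\ell}{s-j}\Big[\binom{k-\ell}{j}-\binom{r}{j}\Big]
\end{equation*}
reveals it as the balance condition under which the expected supply of such useful rounds (amortized over the $k-\ell$ potential choices of a $Z$-vertex) covers both the cost of identifying the $k-\ell-r$ vertices of $Z$ and the cost of creating all $Z$-extending edges in the regime $t\gg n^{r-(k-\ell)/(\binom{k}{s}-\binom{\ell}{s})}$. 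Chernoff bounds then upgrade expected supply to an a.a.s.\ guarantee, and a greedy (or Hall-type matching) assignment of useful rounds to target edges completes the construction; the remaining target edges (those whose $X$-part is contained in $Y$, and those sitting inside $L$) are serviced by the rounds of $U_t$-types not used above.

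The main obstacle is the online nature of the strategy: each round's $V_t$ is an irrevocable commitment, so the order of vertex identification and edge creation matters. This is handled by a phased analysis in which $Z$ grows one vertex at a time, and at each stage one verifies that enough subsequent rounds of each useful type still remain to complete the edges associated with the newly added $Z$-vertex; this verification is governed exactly by \eqref{constraint}, which holds by the minimal choice of $\ell$. A secondary subtlety is the treatment of target edges with more than $r$ vertices in $X$: these cannot be serviced by any $U_t\subseteq X$ alone, so the flexibility to use $V_t$ to supplement $U_t$ with additional $X$-vertices is essential and must be tracked carefully in the assignment.
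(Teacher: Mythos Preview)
Your proposal has a fundamental gap: building a \emph{single} copy of $K_k^{(s)}$ on an adaptively chosen vertex set $W=L\cup X$ cannot succeed in $t=o(n^r)$ rounds. Each round creates at most one edge $e_t=U_t\cup V_t$, and for $e_t$ to be an edge of the target clique we need $U_t\subseteq e_t\subseteq W$. Rounds of your type~(a) require $U_t\subseteq W_{\text{current}}$, a set of size at most $k$, so the expected number of such rounds over the whole process is $O\!\left(t\,k^r/n^r\right)=o(1)$; a.a.s.\ there are none. Rounds of type~(b) must contribute a fresh vertex to $X$, so there can be at most $|X|=k-\ell$ of them in total. Hence at most $k-\ell$ target edges are ever created, whereas $\binom{k}{s}-\binom{\ell}{s}$ are needed outside $L$ alone. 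Your claim that edges ``sitting inside $L$'' are serviced by leftover rounds is also wrong when $\ell\ge s$: those edges require $U_t\subseteq L$, again probability $O(n^{-r})$ per round.

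The paper's proof works very differently. First, the $\binom{\ell}{s}$ edges inside $L$ are obtained by \emph{induction} on $k$ (Phase~0), using that the threshold for $K_\ell^{(s)}$ is strictly smaller. Then, rather than committing to one $X$, the player's strategy produces on the order of $n^{k-\ell}p_1^{\eta_1}$ \emph{candidate} sets $X$ simultaneously: in Phase~1 the random $r$-sets $U_t$ form many copies of a rooted multi-$r$-graph $F$ (each copy yielding, via the player's fixed responses, all target edges with at least $s-r$ vertices in $L$), and a second-moment computation shows that most of these copies share fewer than $r$ vertices outside $L$. It is precisely here that \eqref{constraint} enters: it is equivalent to the condition $S_r=o(\E X)$ guaranteeing this near-disjointness, not a supply/demand balance for a single copy. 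In Phase~2, each candidate has probability $\Theta(p_2^{\eta_2})$ of being completed to $K_k^{(s)}$, and the product $(\text{number of candidates})\times p_2^{\eta_2}\to\infty$; another second-moment argument finishes. The ``many candidates, rare completion'' structure is essential and is entirely missing from your outline.
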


In fact, the conclusion of Theorem~\ref{thm:general_cliques} remains true for \emph{any} $\ell$ satisfying~\eqref{constraint}. However, as shown in the Appendix (see~\eqref{mono}), the exponent $r-\tfrac{k-\ell}{\binom ks-\binom{\ell}s}$ is an increasing function of $\ell$, so the best upper bound on $\tau^{(r)}(K_k^{(s)})$ is, indeed,  obtained for $\ell=\ell_k(r,s)$. Moreover, $\ell=k-r$ satisfies~\eqref{constraint}, so $\ell_k(r,s)$ is well-defined and $\ell_k(r,s)\le k-r$.

Observe also that $\ell_k(r,s)\ge s-r$, since otherwise the left-hand-side of~\eqref{constraint} would be equal to  $k-\ell-r > k-(s-r)-r=k-s>0$.
Moreover, $\ell_k(r,s)=s-r$  if and only if
$$k-s-\frac{k-s+r}{\binom ks}\left[\binom{k-s+r}r-1\right]\le0$$
which is equivalent to  inequality~\eqref{clique_starplus}, stated after Corollary~\ref{thm:upper_bound_complete}, characterizing those cliques for which the upper bound of Theorem~\ref{thm:upper_bound_balanced} matches the lower bound of Theorem~\ref{thm:lower_bound_general}. Indeed, we see that in that case the  conditions on $t$  in Theorems~\ref{thm:general_cliques} and~\ref{thm:upper_bound_balanced} are the same.

It is not easy, in general, to compute $\ell_k(r,s)$. We only managed to show (see Appendix) that
\begin{equation}\label{elka}
\ell_k:=\ell_k(2,3)=\left\lceil k+\frac32-\sqrt{6k+1/4}\right\rceil.
\end{equation}
In the next smallest case we were only able to get the asymptotic lower bound $\ell_k(2,4)=k-\Omega(\sqrt k)$.

\begin{example}\label{Ex5}\rmfamily
Set $s=3$ and $r=2$ and note that $\ell_6=\ell_7=2$. Thus, we get upper bounds $n^{9/5}$ and $n^{13/7}$ for $\tau^{(2)}(H)$ where $H$ is, respectively, $K_6^{(3)}$ and $K_7^{(3)}$, which are not far from the lower bounds $n^{7/4}$ and $n^{64/35}$ given by Theorem~\ref{thm:lower_bound_general}.
We also have $\ell_8=3$, so the threshold for $K_8^{(3)}$ is squeezed between $n^{105/56}$ and $n^{107/56}$.
The values of $\ell_k$ grow rapidly with $k$, getting closer and  closer to $k$ in ratio. Already $\ell_{20}=11$.
\end{example}

\subsection{Better lower bounds}\label{blb}

Finally, let us identify examples of hypergraphs $H$ for which $\tau^{(r)}(H)$ is of a strictly greater order of magnitude than the lower bound given by Theorem~\ref{thm:lower_bound_general} or its corollary. In fact, we have an infinite family of them. We will find them within a class of hypercycles which we define now, along with the corresponding paths.

 An \textbf{$\ell$-tight $s$-uniform cycle} $C_m^{(s,\ell)}$ is an $s$-graph with $k=(s-\ell)m$ vertices and $m$ edges which are formed by segments of consecutive vertices  evenly spread along a cyclic ordering of the vertices in such a way that consecutive edges overlap in exactly $\ell$ vertices.
(E.g., the set of triples $123,345,567,781$ forms a copy of $C_4^{(3,1)}$ and $12345,34567,56781,78123$ forms a copy of $C_4^{(5,3)}$.)
Note that $m\ge \lfloor (s+1)/(s-\ell)\rfloor$ and that non-consecutive edges may also overlap (if $\ell>s/2$).  An \textbf{$\ell$-tight $s$-path} $P_m^{(s,\ell)}$ with $m\ge1$ edges is defined similarly. It has exactly $(s-\ell)m+\ell$ vertices.  In particular,  $C_m^{(s,s-1)}=C_m^{(s)}$ and $P_m^{(s,s-1)}=P_m^{(s)}$ are the tight cycle and tight path defined earlier. (Often, 1-tight cycles and path are called \emph{loose}.) See Figure~\ref{fig:ell-tight} for an example of $C^{(5,3)}_6$ and $P^{(5,3)}_4$.

\begin{figure}[h]
    \centering
    \includegraphics[width=0.8\linewidth]{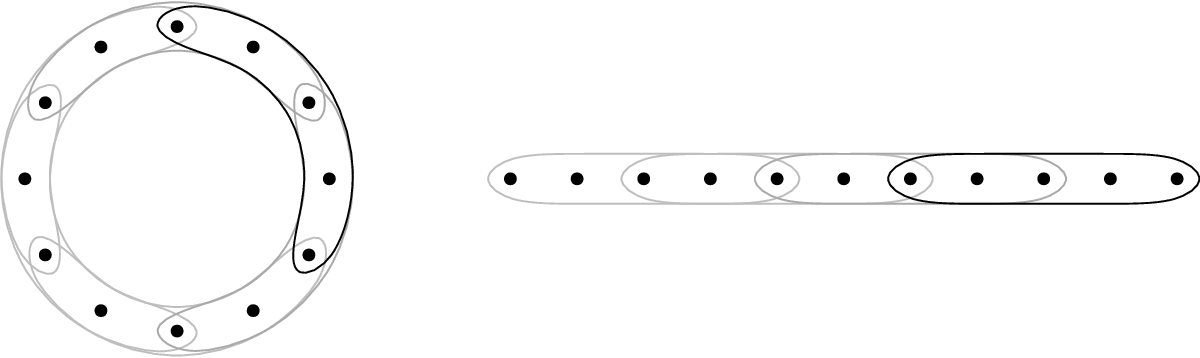}
    \caption{A $3$-tight $5$-uniform cycle $C^{(5,3)}_6$ and a $3$-tight $5$-uniform path  $P^{(5,3)}_4$.}
    \label{fig:ell-tight}
\end{figure}



Let us first summarize what we already know about the threshold function  $\tau^{(r)}$ for $\ell$-tight paths and cycles based on Theorem~\ref{thm:r=1}, Theorem~\ref{thm:lower_bound_general}, and Corollary~\ref{cor:lower_bound}. Since the degeneracies are $d(P_m^{(s,\ell)})=1$ and $d(C_m^{(s,\ell)})=\lfloor\frac s{s-\ell}\rfloor$ (see Appendix, Claim~\ref{degen}), Theorem~\ref{thm:r=1} yields $\tau^{(1)}(P_m^{(s,\ell)})=1$ and, in particular,
$$\tau^{(1)}(C_m^{(s,\ell)})=\begin{cases} 1\quad\mbox{for}\quad \ell<s/2,\\
\sqrt n\quad\mbox{for}\quad \ell=s/2,\\
n^{1-1/s}\quad\mbox{for}\quad \ell=s-1.
\end{cases}
$$
By monotonicity, the above quantities set also lower bounds for $\tau^{(r)}(C_m^{(s,\ell)})$, $r\ge2$.

Next, let us have a closer look at the lower bounds in Theorem~\ref{thm:lower_bound_general} and Corollary~\ref{cor:lower_bound} in the context of $\ell$-tight paths and cycles (all calculations are deferred to Appendix, Claim~\ref{miu}). We have
$$\mu^{(r)}(P_m^{(s,\ell)})=\begin{cases}\frac1r \quad\mbox{for}\quad s-r\ge\ell\\\frac{m}{(s-\ell)m+\ell-s+r} \quad\mbox{otherwise}.
\end{cases}
$$
Thus, for $s-r\le\ell-1$,
$$\tau^{(r)}(P_m^{(s,\ell)})\ge  n^{r+\ell-s+\frac{s-r-\ell}m},
$$
while for the remaining values of $r$, we get the trivial bound of 1.

For $\ell$-tight cycles one can show that
\begin{equation}\label{miucycle}
\mu^{(r)}(C_m^{(s,\ell)})=\max\left\{\frac m{(s-\ell)m-s+r},\frac 1r \right\}=\begin{cases}
\frac1r \quad\mbox{for}\quad s-r\ge2\ell
\\\frac m{(s-\ell)m-s+r}\quad\mbox{for}\quad s-r\le\ell.
\end{cases}
\end{equation}
Thus, for $ s-r\le\ell$,
$$\tau^{(r)}(C_m^{(s,\ell)})\ge  n^{r+\ell-s+\frac{s-r}m}.
$$

For $\ell +1\le s-r\le2\ell-1$, however, the formula for $\mu^{(r)}(C_m^{(s,\ell)})$ depends on how large $m$ is: more precisely, it is $\frac 1r$ for $m\ge\tfrac{s-r}{s-\ell-r}$ and $\frac m{(s-\ell)m-s+r}$ otherwise. These are the cases within which the lower bound on $\tau^{(r)}(C_m^{(s,\ell)})$  can be improved. Indeed, with some extra assumptions on $\ell$ and $r$, in Section~\ref{s6} we are able to prove the following.


\begin{restatable}{proposition}{rsl}\label{prop:rsl}
Let $m\ge3$, $s\ge3$, $1\le\ell\le s/2$, and $s-r\ge\ell$.
Then, $\tau^{(r)}(P_m^{(s,\ell)})=1$ while
$$
\tau^{(r)}(C_m^{(s,\ell)})
\begin{cases}
=1\qquad\qquad&\mbox{if}\quad s-r\ge2\ell
\\=n^{1/2}\quad\quad&\mbox{if}\quad s-r=2\ell-1
 \\= n^{\frac{r-s+2\ell}3}\quad\quad&\mbox{if}\quad s-r\le2\ell-2.
 \end{cases}
$$
   \end{restatable}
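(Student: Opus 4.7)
The plan is to handle the four claimed equalities separately, in each case combining an explicit player strategy for the upper bound with a tailored counting argument for the lower bound. The standing assumption $s-r\ge\ell$ translates into $r\le s-\ell$, so in every round the random $r$-set $U_t$ fits into the "new" vertices of whichever edge is currently being built; this fact drives every upper-bound construction.

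For the path $P_m^{(s,\ell)}$ and for the cycle when $s-r\ge 2\ell$, a pure greedy strategy works: build one edge per round, reuse $\ell$ vertices from the preceding edge, and fill the $s-\ell$ new slots with $U_t$ together with player-chosen fresh vertices. Since only $O(1)$ vertices are ever used, a.a.s.\ each $U_t$ is disjoint from the current structure; for the cycle, the closing edge has $2\ell$ old slots and $s-2\ell\ge r$ new slots, which still comfortably absorb $U_t$. Hence $\tau=1$ in both sub-cases, and the trivial bound $\tau\ge 1$ matches.

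For the cycle with $s-r=2\ell-1$, the closing edge has only $r-1$ new slots, so at least one random vertex must land in the closing $2\ell$-set. I would build $K:=\lceil n^{1/2}\rceil$ vertex-disjoint copies of $P_{m-1}^{(s,\ell)}$ in $O(K)$ rounds, each carrying its own closing $2\ell$-set, and then keep trying to close; the per-round success probability is $\Theta(K/n)=\Theta(n^{-1/2})$, so $\omega(n^{1/2})$ further rounds give $\omega(1)$ expected closures and a.a.s.\ success. For the matching lower bound, observe that by round $t$ at most $st$ vertices have been used, so $\Pr[U_t\text{ hits some old vertex}]=O(t/n)$; union-bounding over rounds gives expected collisions $O(t^2/n)=o(1)$ when $t=o(\sqrt{n})$. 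In this collision-free event every edge of $G_t^{(r,s)}$ carries $r$ brand-new random vertices, leaving at most $s-r=2\ell-1$ old vertices per edge, contradicting the $2\ell$ old vertices that must appear in the closing edge of any $C_m^{(s,\ell)}$.

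For the cycle when $s-r\le 2\ell-2$, set $q:=r-s+2\ell\ge 2$; now $q$ random vertices of the closing round must fall in the $2\ell$-set. The upper bound uses a three-stage product construction of length $t_1:=\lceil n^{q/3}\rceil$ each: first build the $O(1)$-length greedy body $e_2,\dots,e_{m-2}$, then $t_1$ candidate "left extensions" through $e_1$ and $t_1$ candidate "right extensions" through $e_{m-1}$ (each carrying a fresh $\ell$-set of overlap vertices chosen by the player), producing $t_1^2$ candidate closing $2\ell$-sets; in the third stage, each round has probability $\Theta(t_1^2/n^q)$ of covering one of those $2\ell$-sets in $\ge q$ vertices, so $t_1$ third-stage rounds yield $\Theta(t_1^3/n^q)=\Theta(1)$ expected closures, and taking $t_1$ slightly above $n^{q/3}$ delivers the a.a.s.\ conclusion. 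The matching lower bound is the main obstacle, because it does \emph{not} follow from Corollary~\ref{cor:lower_bound} (which gives only $n^{(2r-2s+3\ell)/3}$, e.g.\ $n^{1/3}$ instead of $n^{2/3}$ when $s=6,\ell=3,r=2$) and the naive nice-round improvement above only gives $n^{q/(q+1)}$; instead, one must identify in any formed $C_m^{(s,\ell)}$ three critical rounds (those adding the closing edge and its two neighbors) whose random $r$-sets jointly realize the $q$ coincidences in the $2\ell$-closing set together with the two $\ell$-overlap sides, and then show by a first-moment calculation over ordered triples of rounds that the expected number of such triples is $O(t^3/n^q)$, which is $o(1)$ precisely when $t=o(n^{q/3})$.
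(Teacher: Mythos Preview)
Your treatment of the path, of the cycle with $s-r\ge 2\ell$, and of all the lower bounds is essentially the paper's argument (for the lower bound in the last case, your ``three critical rounds'' is exactly the paper's count over pairs of earlier edges plus the closing round, giving $O(t^3/n^q)$). For $s-r=2\ell-1$ your upper bound via $K=\Theta(n^{1/2})$ vertex-disjoint copies of $P_{m-1}^{(s,\ell)}$ is a legitimate alternative to the paper's single path $P_{m-3}^{(s,\ell)}$ with two ``fans'' $E',E''$ at the ends; both reduce the closing step to a single coincidence with probability $\Theta(n^{-1/2})$ per round.

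The genuine gap is in your upper bound for $s-r\le 2\ell-2$. Set $q:=r-s+2\ell$. Your three-stage scheme asks for $t_1=\omega n^{q/3}$ left extensions and $t_1$ right extensions, each carrying a \emph{fresh} $\ell$-set, so that the $t_1^2$ closing $2\ell$-sets are honestly distinct and the Stage~3 success probability really is $\Theta(t_1^2/n^q)$. But a fresh $\ell$-set per extension consumes at least $\ell t_1$ new vertices on each side; since $q\le\ell$ and $\ell\ge q\ge 3$ whenever $s-r\le 2\ell-3$, this requires $2\ell t_1\ge 2q\,\omega n^{q/3}\ge 6\omega n>n$ vertices, which is impossible. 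Thus your construction only covers $q=2$, and the case $s-r\le 2\ell-3$ (e.g.\ $(s,\ell,r)=(6,3,3)$ or $(8,4,4)$) is left open.

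The paper avoids this obstruction by dropping disjointness altogether. After the short body $P_{m-3}^{(s,\ell)}$ with ends $L',L''$, it partitions the remaining vertices into $W_1,W_2,W_3$ and in Phase~1 simply declares $e_i=U_i\cup L'$ whenever $U_i\subset W_1$ (and symmetrically for $W_2$), producing $\Theta(t)$ overlapping candidate extensions on each side. A separate lemma shows that a.a.s.\ no $\lfloor q/2\rfloor$-set lies in more than three of the $U_i$, so among the sets $e'\setminus L'$ there are still $\Theta(t)$ \emph{distinct} $\lfloor q/2\rfloor$-subsets (and likewise $\Theta(t)$ distinct $\lceil q/2\rceil$-subsets on the right). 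The Phase~2 success probability is then $\Theta(t^2/n^q)$ per round, and $t=\omega n^{q/3}$ rounds finish the job for every $q\ge 2$.
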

\noindent Notice that  all thresholds stated in Proposition~\ref{prop:rsl} are independent of $m$, the number of edges. Moreover, the thresholds $\tau^{(r)}(C_m^{(s,\ell)})$ for $\ell\le s-r\le2\ell-1$ are higher than the lowers bound in Corollary~\ref{cor:lower_bound}, except when $m=3$ \emph{and} $s-r=\ell$.
To see this, note that $1/2>1/3=(r-s+2\ell)/3$ for $ s-r=2\ell-1$ and, in general, the inequality
$$\frac{r-s+2\ell}3\ge r+\ell-s+\frac{s-r}m$$
is equivalent to $(s-r)(2m-3)\ge\ell m$, which holds for $m\ge3$, since $s-r\ge\ell$, and is strict for $m\ge4$ or $s-r\ge\ell+1$.
Hence, for $s\ge3$, $1\le\ell\le s/2$, $m\ge3$, and $s-2\ell+1\le r\le s-\ell$, with the above mentioned exception,
$$\tau^{(r)}(C_m^{(s,\ell)})>n^{r-1/ \mu^{(r)}(C_m^{(s,\ell)})},$$
improving the lower bound from Corollary~\ref{cor:lower_bound}.
The smallest instances in this class, with $r=2$, are $C_4^{(4,2)}$ --- for which the two lower bounds on $\tau^{(2)}$ are, respectively, $n^{1/2}$ and $n^{2/3}$, and $C_3^{(5,2)}$ --- with the two lower bounds, $1$ and $n^{1/2}$. The first one can be generalized: by Theorem~\ref{prop:rsl}, 
for all $m\ge4$, we have $\tau^{(2)}(C_m^{(4,2)})=n^{2/3}$, while the lower bound in Corollary~\ref{cor:lower_bound} is $n^{2/m}$.

\subsection{Probabilistic tools}\label{prelim} Here we gather some elementary probabilistic facts and estimates to be used later throughout the proofs. We begin with a version of the second moment method, useful for so called \emph{counting} random variables, where the variance is being expressed in terms of the second factorial moment. Let $Y$ be a nonnegative, integer-valued random variable. Then, for every $\epsilon>0$, Chebyshev's inequality gives
\begin{equation}\label{2ndMM}
\Prob\left(|Y-\E Y|\ge\epsilon\E Y\right)\le\frac{\mathrm{Var}(Y)}{\epsilon^2(\E Y)^2}=\frac1{\epsilon^2}\left(\frac{\E(Y(Y-1))}{(\E Y)^2}+\frac1{\E Y}-1\right).
\end{equation}
Assuming that $\E Y\to\infty$ as $n \to \infty$, in order to show that the above probability tends to 0, it suffices to show that $\E(Y(Y-1))\sim(\E Y)^2$.

Next, we give an estimate of the probability that the random multi-$r$-graph $R_{t}^{(r)}:=\{U_1,\dots,U_t\}$ contains a fixed sub-multi-$r$-graph. More precisely, let $F$ be a multi-$r$-graph with the vertex set $V(F)\subset [n]$, $h$ vertices, $m$ edges, and  multiplicities $m_1,\dots,m_{\binom  hr}$ (some of which may be equal to 0). We want to estimate $\Prob(F\subset R_t^{(r)})$.

Consider
 first a small example. Let $r=2$ and $T$ be the triangle on a fixed vertex set $\{1,2,3\}$  with the edge $\{1,2\}$ doubled, that is, the multiplicities are $2,1,1$. If one insists that the  times of hitting particular edges  are fixed, say, at $1\le t_{13}<t_{12}< t_{23}<t_{12}'\le t$, then the probability of actually creating $T$  at these designated times is precisely
\begin{align*}&\left(1-\frac3{\binom n2}\right)^{t_{13}-1}\times\frac1{\binom n2}\times\left(1-\frac2{\binom n2}\right)^{t_{12}-t_{13}-1}\times\frac1{\binom n2}\times
\\&\left(1-\frac2{\binom n2}\right)^{t_{23}-t_{12}-1}\times\frac1{\binom n2}\times\left(1-\frac1{\binom n2}\right)^{t_{12}'-t_{23}-1}\times\frac1{\binom n2}\sim \left(\frac1{\binom n2}\right)^4,
\end{align*}
 as long as $t = o(n^2)$. The number of ways to select the four hitting times and assign them to the four edges, due to the exchangeability of $t_{12}$ and $t_{12}'$, is $\binom{t}4\times 4!/2$. Thus,
$$\Prob(T\subset R_t^{(2)})\sim \binom{t}4\frac{4!}2 \left(\frac1{\binom n2}\right)^4\sim\frac{1}2 \left(\frac{t}{\binom n2}\right)^4.$$
Similarly, in the general case, setting $p=t/\binom nr$,
\begin{equation}\label{multi}
\Prob(F\subset R_t^{(r)})\sim\frac{p^{m}}{m_1!\cdots m_{\binom hr}!}.
\end{equation}

The proof of Proposition~\ref{prop:rsl}  uses the following simple lemma.

\begin{lemma}\label{trice}
For all $\lfloor x/2\rfloor\le q\le r$, a.a.s.\ every $q$-element subset of $[n]$ is contained in at most three sets $U_i$, $i=1,\dots,t$.
\end{lemma}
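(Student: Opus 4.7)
The plan is to use a straightforward first-moment estimate, applied once for each $q$ in the range. Since the range $\lfloor x/2\rfloor\le q\le r$ contains only $O(1)$ integers, by a union bound it suffices to prove, for each fixed such $q$, that with high probability no $q$-set is contained in four or more of the $U_i$. So, for each $q$, let $Z_q$ denote the number of $q$-element subsets $T\subseteq[n]$ that are contained in at least four of the sets $U_1,\dots,U_t$. I would show $\E Z_q=o(1)$ and conclude via Markov.

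The key computation is transparent. The $U_i$ are mutually independent and uniform over $\binom{[n]}{r}$, so for any fixed $q$-set $T$ and any fixed indices $i_1<i_2<i_3<i_4$ in $[t]$,
$$
\Prob\bigl(T\subseteq U_{i_1}\cap U_{i_2}\cap U_{i_3}\cap U_{i_4}\bigr)
=\left(\frac{\binom{n-q}{r-q}}{\binom{n}{r}}\right)^{4}=O(n^{-4q}).
$$
Summing over the $\binom{n}{q}=O(n^q)$ choices of $T$ and the $\binom{t}{4}=O(t^4)$ choices of four indices and plugging in $t=\omega n^{x/3}$,
$$
\E Z_q \;\le\; O\!\bigl(n^{q}\cdot t^{4}\cdot n^{-4q}\bigr)\;=\;O\!\bigl(t^{4}n^{-3q}\bigr)\;=\;O\!\bigl(\omega^{4}\,n^{\,4x/3\,-\,3q}\bigr).
$$

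The step requiring care (and the main obstacle in the write-up) is verifying that the exponent $4x/3-3q$ is strictly negative throughout the stated range of $q$. Since $q\ge\lfloor x/2\rfloor$ forces $3q\ge 3\lfloor x/2\rfloor$, one checks case-by-case (even versus odd $x$, and the boundary case $q=\lfloor x/2\rfloor$) that $3q-4x/3$ is bounded below by a positive constant; the slack is narrowest when $x$ is small and odd, where the proof leans on the freedom to take $\omega$ growing as slowly as we wish (so the $\omega^4$ factor is swallowed). Once this inequality is in place, $\E Z_q=O(\omega^4n^{-c})=o(1)$ for a suitable constant $c=c(q,x)>0$, and Markov plus the union bound over the $O(1)$ values of $q$ finish the proof.
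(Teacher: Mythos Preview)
Your approach matches the paper's exactly: define $Z_q$ as the number of $q$-sets contained in at least four of the $U_i$, bound $\E Z_q=O(\omega^4 n^{4x/3-3q})$, and apply Markov. The paper's proof is a one-line version of yours.

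The gap is in your exponent verification. You assert that ``one checks case-by-case \ldots\ that $3q-4x/3$ is bounded below by a positive constant,'' but this is false at the bottom of the range when $x$ is odd: for $q=\lfloor x/2\rfloor=(x-1)/2$ one computes $3q-4x/3=(x-9)/6$, which is non-positive for every odd $x\le 9$. Concretely, at $x=3$, $q=1$ the bound reads $\E Z_q=\Theta(\omega^4 n)$, which diverges regardless of how slowly $\omega$ grows---so your appeal to ``the freedom to take $\omega$ growing as slowly as we wish'' cannot rescue this case. (The paper's own proof makes the same slip, asserting ``$4x/3<3\lfloor x/2\rfloor$ for $x\ge 2$''; this suggests the intended lower bound on $q$ in the lemma is $\lceil x/2\rceil$ rather than $\lfloor x/2\rfloor$, for which $3q\ge 3x/2>4x/3$ is immediate, and this is what the subsequent application actually needs.) For even $x$, and more generally for all $q\ge\lceil x/2\rceil$, your argument is complete and identical to the paper's.
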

\proof Let $X$ be the number of $q$-element sets contained in at least four sets  $U_i$, $i=1,\dots,t$.
For a fixed $q$-element set $Q$, the probability that it is contained in at least four sets  $U_i$ is $O(t^4/n^{4q})$. Thus, as there are $\binom nq<n^q$ such sets and $4x/3<3\lfloor x/2\rfloor\le3q$ for $x\ge2$,
$\E X=O(t^4/n^{3q})=\omega^4n^{4x/3-3q}=o(1)$.
\qed

For the proof of Theorem~\ref{thm:upper_bound_balanced}   we will need the following lemma which is a straightforward generalization of Theorem 3.29 from~\cite{JLR} (the edge-disjoint case) to hypergraphs. Given integers $2\le r<n$, a real $p:=p(n)\in(0,1)$, and an $r$-graph $F$, let
$$\Phi_F:=\Phi_F(n,p)=\min\{n^{v_{F'}}p^{e_{F'}}:\; F'\subseteq F,\; e_{F'}>0\}$$ 
be the order of magnitude of the expectation of the ``least expected'' sub-hypergraph of $F$. This quantity, in turn, determines the order of magnitude
of the largest number $D_F:=D_F(n,p)$ of edge-disjoint copies of $F$ one can find in $G^{(r)}(n,p)$, a random $n$-vertex $r$-graph obtained by turning each $r$-element subset of vertices into an edge independently with probability $p$.
\begin{lemma}[Janson, {\L}uczak, Ruci\'{n}ski \cite{JLR}]\label{disj} For all integers $r\ge2$ and every $r$-graph $F$, there exist constants $0<a<b$ such that
if $\Phi_F\to\infty$, then  a.a.s. $a\Phi_F\le D_F\le b\Phi_F$. 
\end{lemma}

Lemma~\ref{disj} will be used to facilitate the desired outcome of Phase 1 of a strategy supporting the proof of Theorem~\ref{thm:upper_bound_balanced}.
However, in order to apply this lemma in our context, we need first to address two issues: (i) the appearance of repeated edges and (ii) the uniformity of the model -- as opposed to the binomial model  $G^{(r)}(n,p)$. For the latter
we will use a consequence of an asymptotic model  equivalence result   from \cite[Corollary 1.16(i)]{JLR}. Let $G^{(r)}(n,t)$ be an $r$-graph chosen uniformly at random from all $r$-graphs on vertex set $[n]$ which have $t$ edges.

\begin{lemma}[Janson, {\L}uczak, Ruci\'{n}ski \cite{JLR}]\label{equi} For all integers $r\ge2$ and every increasing property $Q$ of $r$-graphs, if  $G^{(r)}(n,p)$ has $Q$ a.a.s., then $G^{(r)}(n,t)$ also has $Q$ a.a.s., provided $p=t/\binom nr$.
\end{lemma}

The issue of repeated edges can be resolved by taking an appropriate random subsequence of the process $(R_t^{(r)})_t$.

\begin{lemma}\label{no-rep} For all integers $r\ge2$ and every sequence $t:=t(n)=o(n^r)$, there is a joint distribution of the  random multi-$r$-graph $R_t^{(r)}$ and the random equiprobable  $r$-graph $G^{(r)}(n,t')$, where $t' = t-t^{3/2}/n^{r/2}=t-o(t)$ such that a.a.s.\ $G^{(r)}(n,t')\subset R_t^{(r)}$.
\end{lemma}

\proof The expected number of times a repetition occurs in $(R_t^{(r)})_t=(U_1,\dots,U_t)$ (that is, an edge is selected again) is at most $t\times t/\binom nr=o(t)$. Thus, by Markov's inequality, a.a.s.\ there are no more than $t^{3/2}/n^{r/2}$  such times. This means that  along with $(R_t^{(r)})_t$ one can a.a.s.\ generate its sub-process $(U_{j_1},\dots,U_{j_{t'}})$ with $t'$ ``unparalleled''  edges. Indeed, just ignore a chosen edge whenever it had been chosen before. Then the next edge, provided it is not ignored, is selected uniformly at random from those $r$-tuples of vertices that are not present already. We may identify the sub-process $(U_{j_1},\dots,U_{j_{t'}})$ with the ``static'' equiprobable $r$-graph $G^{(r)}(n,t')$. Finally, note that when $t=o(n^r)$, we have $t^{3/2}/n^{r/2}=o(t)$. \qed

\medskip

Lemmas~\ref{equi} and~\ref{no-rep} together imply a swift transition between our model and the standard binomial model.

\begin{corollary}\label{L+L}
 Let $r\ge2$ and $Q$ be an increasing property of $r$-graphs. Further, let $t:=t(n)=o(n^r)$, $t'=t-t^{3/2}/n^{r/2}$, and $p:=p(n)=t'/\binom nr$. If $G^{(r)}(n,p)$ has $Q$ a.a.s., then  $R_t^{(r)}=\{U_1,\dots,U_t\}$ also has $Q$ a.a.s.
\end{corollary}

\proof If $G^{(r)}(n,p)$ has $Q$ a.a.s., then, by Lemma~\ref{equi}  $G^{(r)}(n,t')$ also has $Q$ a.a.s. By Lemma~\ref{no-rep}, a.a.s.\ $R_t^{(r)}$ contains a copy of $G^{(r)}(n,t')$ and thus, by the monotonicity of $Q$, it too possesses $Q$ a.a.s.
\qed

\medskip

Before turning to the proof of Theorem~\ref{thm:upper_bound_balanced}, we need to show a simple fact.

\begin{claim}\label{simple} If an $r$-graph $F$ is edge-balanced and $p=o(n^{-1/g(F)})$, then $\Phi_F=n^{v_F}p^{e_F}$.
\end{claim}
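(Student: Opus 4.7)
The goal is to show that among all sub-$r$-graphs $F'\subseteq F$ with $e_{F'}\ge 1$, the quantity $n^{v_{F'}}p^{e_{F'}}$ is (asymptotically) minimized by $F'=F$ itself. Since adjoining isolated vertices to $F'$ only increases $n^{v_{F'}}p^{e_{F'}}$, I may assume $F'$ has no isolated vertices; hence any proper subgraph $F'\subsetneq F$ satisfies $e_{F'}<e_F$. The case $e_F=1$ is immediate (the only admissible $F'$ is $F$), so I focus on $e_F\ge 2$, where $g(F)=(e_F-1)/(v_F-r)$.

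For such an $F'$, the desired inequality $n^{v_F}p^{e_F}\le n^{v_{F'}}p^{e_{F'}}$ is equivalent to $n^{v_F-v_{F'}}p^{e_F-e_{F'}}\le 1$. Using $p=o(n^{-1/g(F)})$, this will follow from
$$\frac{v_F-v_{F'}}{e_F-e_{F'}}\le \frac{1}{g(F)}=\frac{v_F-r}{e_F-1}.$$
A short expansion rearranges this to the equivalent form
$$(v_F-r)(e_{F'}-1)\le (v_{F'}-r)(e_F-1),$$
which, when $e_{F'}\ge 2$, is precisely the edge-balanced hypothesis $g(F')\le g(F)$ after clearing denominators. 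In the remaining case $e_{F'}=1$, the left-hand side vanishes and the inequality reduces to $v_{F'}\ge r$, which holds trivially because $F'$ contains an edge.

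I do not anticipate any real obstacle: the argument amounts to an algebraic rearrangement that aligns the target inequality with the edge-balanced assumption. The only mild care required is in tracking the piecewise definition of $g$ — the split between $e_{F'}=1$ (where $g(F')=1/r$) and $e_{F'}\ge 2$ (where $g(F')=(e_{F'}-1)/(v_{F'}-r)$) — but both branches are handled uniformly by the derived inequality above.
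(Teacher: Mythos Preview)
Your proof is correct and follows essentially the same approach as the paper: both derive from the edge-balanced hypothesis the inequality $\frac{e_F-e_{F'}}{v_F-v_{F'}}\ge g(F)$ (equivalently, your rearranged form $(v_F-r)(e_{F'}-1)\le(v_{F'}-r)(e_F-1)$), and then use $p=o(n^{-1/g(F)})$ to conclude $n^{v_F-v_{F'}}p^{e_F-e_{F'}}=o(1)$. Your treatment is in fact slightly more careful than the paper's, which restricts attention to $v_F>v_{F'}>r$ and leaves the boundary cases ($e_{F'}=1$, or $v_{F'}=v_F$) implicit; you handle the single-edge case explicitly.
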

\proof
First observe that for all $F'\subseteq F$ with $v_F>v_{F'}>r$ the inequality $g(F')\le g(F)$ implies that $g(F)\le\frac{e_F-e_{F'}}{v_F-v_{F'}}$. Thus,
$$n^{v_F-v_{F'}}p^{e_F-e_{F'}}=\left(np^{\frac{v_F-v_{F'}}{e_F-e_{F'}}}\right)^{v_F-v_{F'}}\le\left(np^{g(F)}\right)^{v_F-v_{F'}}=o(1),$$
which yields that, indeed, $\Phi_F=n^{v_F}p^{e_F}$.
\qed

\section{Proof of Theorem~\ref{thm:lower_bound_general}}\label{s3}

Here we prove Theorem~\ref{thm:lower_bound_general}, restated below for convenience.

\LBgeneral*

\begin{proof} Set $G_t:=G_t^{(r,s)}$ and let $H$ be an $s$-graph with $k$ vertices and $m$-edges. This generic proof relies on an obvious observation that for a copy of $H$ to exist in $G_t$, there must be, in the first place, a set of $k$ vertices spanning at least $m$ edges of $G_t$. Formally, for any $j$, $1 \le j \le m$, any time $t$, and any strategy $\mathcal{S}$, let $X_{j}^{\mathcal{S}}(t)$ be a random variable counting the number of $k$-element sets of vertices that induce in $G_t$ at least $j$ edges at the end of round $t$.
We will assume that the player plays according to a strategy $\mathcal{S}$. However, since we only provide a universal upper bound for the expected value of $X_{j}^{\mathcal{S}}(t)$, it will actually not depend on $\mathcal{S}$. Therefore, to unload the notation a little bit, let us suppress the dependence on the strategy $\mathcal{S}$.

We will show by induction on $j$ that for any $1 \le j \le m$ we have
\begin{equation}\label{eq:lower_bound_induction}
\E X_{j}(t) \le t^{j} k^{r(j-1)} n^{k-s+r - rj}\qquad\qquad\mbox{for all }\;t\ge1.
\end{equation}
The base case $j=1$, holds trivially and deterministically, as we  have
$$
X_{1}(t) \le t \binom{ n-s}{k-s } \le t n^{k-s}.
$$
Indeed, there are precisely $t$ edges at the end of round $t$, and each of them is contained in  $\binom{n-s}{ k-s }$ sets of size $k$ (as we are after an upper bound, we ignore the possible repetitions of the $k$-sets here).

For the inductive step, suppose that~(\ref{eq:lower_bound_induction}) holds for some value of $j-1$, $1 \le j-1 < m$ (and all $t$) and our goal is to show that it holds for $j$ too (again, for all $t$).
 We say that a set $W \subseteq [n]$, $|W|=k$, is of \emph{type $j$ at time $t$} if it spans in $G_t$ at least $j$ edges and we define an indicator random variable $I_{j}^{W}(t)$  equal to 1 if $W$ is of type $j$ at time $t$, and 0 otherwise. Thus (as a sanity check),
 \begin{equation}\label{XI}
 X_{j}(t)=\sum_{W\in\binom{[n]}k}I_{j}^{W}(t).
 \end{equation}

Note that in order to \emph{create} a set $W$ of type $j$ at time $i$, it is necessary that $W$ was of type $j-1$ at time $i-1$ (in fact, having exactly $j-1$ edges), as well as,  the $r$-vertex set selected by the semi-random hypergraph process at time $i$ is contained in $W$, that is, $U_i \subseteq W$.
Thus, setting also $J^{W}(t)=1$ if $U_t\subseteq W$ and 0 otherwise, we have
\begin{equation}\label{XIJ}
X_{j}(t)\le \sum_{i=1}^{t}\sum_{W\in \binom{[n]}k}I_{j-1}^{W}(i-1)J^{W}(i).
\end{equation}
Since $U_i $ is selected uniformly at random from $\binom{[n]}r$,
$$\E(J^{W}(i))=\Prob(J^{W}(i)=1)=\frac{\binom kr}{\binom nr}\le\frac{k^r}{n^r},$$
as $k\le n$.

We now take the expectation on both sides of \eqref{XIJ}.   Using the linearity of expectation, and the independence of $I_{j-1}^{W}(i)$ and $J^{W}(i)$,  \eqref{XI}, 
we get that
\begin{eqnarray*}
\E X_{j}(t) &\le& \sum_{i=1}^{t} \ \E\left(\sum_{W\in\binom{[n]}k}I_{j-1}^{W}(i-1)\right)\frac{k^r}{n^r} ~~=~~ \sum_{i=1}^{t} \ \E X_{j-1}(i-1) \frac{k^r}{n^r}\\
&\le& \sum_{i=1}^t \left(i^{j-1} k^{r(j-1-1)} n^{k-s+r - r(j-1)}\right) \cdot \frac{k^r}{n^r} ~~\le~~ t^{j} k^{r(j-1)} n^{k-s+r - rj},
\end{eqnarray*}
and so (\ref{eq:lower_bound_induction}) holds for $j$ too. This finishes the  inductive proof of \eqref{eq:lower_bound_induction}.

The desired conclusion is now easy to get. Note that, by~\eqref{eq:lower_bound_induction} with $j=m$,
$$
\E X_{ m}(t) \le t^{m} k^{r({m}-1)} n^{k-s+r - r{m}} = O \left(  n^{k-s+r} \left( \frac {t}{n^{r}} \right)^{m} \right).
$$
Hence, if $t = o( n^{r-(k-s+r)/{m}} )$, then $\E X_{ m }(t) = o(1)$ and so, by Markov's inequality, $X_{ m }(t) = 0$ a.a.s. Since the presence of a copy of $H$ in $G_t$ implies that $X_{ m }(t) \ge 1$, we conclude that a.a.s.\ $G^{(r,s)}_t \not\in \mathcal{P}_H$ which was to be proved.
\end{proof}

\bigskip

\section{Proof of Theorem \ref{thm:upper_bound_balanced}}\label{s4}
In this section we prove Theorem~\ref{thm:upper_bound_balanced}, restated here for convenience.

\UBbalanced*

\medskip

\begin{proof}[Proof of Theorem~\ref{thm:upper_bound_balanced}] For  integers $s>r\ge2$, let $H$ be an $(s,s-r)$-starplus on $k \ge s$ vertices with $\lambda_1$ rays and excess
$\lambda_2$, satisfying the assumptions of Theorem \ref{thm:upper_bound_balanced}.
Set
$$m=|E(H)|=\lambda_1+\lambda_2\;,\quad\kappa=r-\frac{k-s+r}{m}\;,\quad\mbox{and}\quad t=\omega n^{\kappa},$$
 where $\omega:=\omega(n)\to\infty$ as $n \to \infty$ but, say, $\omega=o(\log n)$.

Let us again abbreviate $G_t:=G_t^{(r,s)}$. To play the game ${\mathcal P}_H$, we equip the player with the following strategy. The vertex set $C=\{1,\dots,s-r\}$ is put aside. From the player's point of view there will be two phases of the game (but just one for $\lambda_2=0$), lasting, respectively,
$t_1$ and $t_2 := t-t_1$ steps, where
$$t_1=\begin{cases}
t & \mbox{ when }\lambda_2=0 \\
t/2 & \mbox{ when \eqref{eqn:ell-gen} is strict}\\
t/\omega_1 & \mbox{ when there is equality in \eqref{eqn:ell-gen}},
\end{cases}$$ where
$$\omega_1=\omega^{\frac{\lambda_1+\lambda_2/2}{\lambda_1}}.$$
For convenience, we also set
$$p=\frac{t}{\binom nr},\quad p_1=\frac{t_1}{\binom nr},\quad \text{ and } \quad p_2=\frac{t_2}{\binom nr}.$$


During Phase 1, whenever a random $r$-set $U_i$ lands within $[n]\setminus C$, the player draws the edge $U_i\cup C$, that is, they choose $V_i=C$.
 The goal of this phase is to collect  sufficiently many
 \emph{edge-disjoint} copies of $H_1$ on $[n]\setminus C$ created purely by the random $r$-sets $U_i$ of $R_t^{(r)}$.
 According to the player's strategy, this will yield in $G_{t_1}$ plenty of copies of the $s$-uniform $c$-star on $k$ vertices with the same center $C$ whose flowers are isomorphic to $H_1$. This will end the proof when $\lambda_2=0$. In fact, in this special case all we need is just one copy of $H_1$.

So, let us start with the special case $\lambda_2=0$. Then,
$$p_1\sim r!\omega n^{-\frac{k-s+r}m}=r!\omega n^{-\frac{v_{H_1}}{e_{H_1}}}.$$ Moreover, since $H_1$ is edge-balanced, it is also balanced (see Appendix, Claim \ref{ebal->bal}) in the usual sense, that is, $e_{H'_1}/v_{H'_1}\le e_{H_1}/v_{H_1}$ for all sub-$r$-graphs $H_1'$ of $H_1$. Thus, it can be routinely shown by the second moment method (cf. the proof of the 1-statement of \cite[Theorem 3.4]{JLR}) that a.a.s. $G^{(r)}(n,p_1')$, where $p_1'=t'/\binom nr$ and $t'$ are given in Corollary \ref{L+L}, contains a copy of $H_1$ vertex-disjoint from $C$. (The expected number of copies of $H_1$ containing at least one vertex of $C$ is  $O(\omega^{\lambda_1}/n)=o(1)$.) By Corollary \ref{L+L}, the same property is a.a.s. satisfied by $R_t^{(r)}$, which completes the proof in this case.

\medskip

From now on, assume that $\lambda_2\ge 1$.
Recall that $H_1$ is the flower of $H$ and note that $g(H_1)=\frac{\lambda_1-1}{k-s}$. So,  if \eqref{eqn:ell-gen} is strict, then
$$p_1=\frac p2\sim\frac{r!}2\omega n^{-\frac{k-s+r}{\lambda_1+\lambda_2}}=o(n^{-1/g(H_1)}).$$
On the other hand, if there is equality in \eqref{eqn:ell-gen}, then
$$p_1=\frac{\omega n^{-\frac{k-s+r}{\lambda_1+\lambda_2}}}{\omega_1}\overset{\eqref{eqn:ell-gen}}{=}\frac{n^{-1/g(H_1)}}{\omega^{\lambda_2/2\lambda_1}}=o(n^{-1/g(H_1)}),$$
again. Thus, in either case we have
\begin{equation}\label{p1}
p_1=o(n^{-1/g(H_1)}).
\end{equation}

By Lemma \ref{disj} (noting that $\Phi_{H_1} = n^{k-s+r}p_1^{\lambda_1} = \Theta\left(n^{(k-s+r)(1-\lambda_1/m)}\right) \rightarrow \infty$), Claim \ref{simple}, Equation \eqref{p1}, and  Corollary \ref{L+L}, for some $a>0$, there is a.a.s.\ a family $\mathcal H''$ of  edge-disjoint copies of $H_1$ in $R_t^{(r)}$ of size $|\mathcal H''|\sim an^{k-s+r}p_1^{\lambda_1}$.  The expected number of copies of $H_1$ intersecting $C$ is $O(|\mathcal H''|/n)$, so, after deleting them, we obtain a family $\mathcal H'$ of  edge-disjoint copies of $H_1$, all of which are vertex-disjoint from $C$, of asymptotically the same size as $\mathcal H''$. Also, crucially, the expected number of pairs of edge-disjoint copies of $H_1$ which share at least $r$ vertices is, by \eqref{eqn:ell-gen} and the definition of $t_1$, $O(|\mathcal H'|^2/n^r)=o(|\mathcal H'|)$. Thus, by further deleting from $\mathcal H'$ one copy of each such pair, we obtain the ultimate family $\mathcal H$ of  edge-disjoint copies of $H_1$ which avoid $C$, and  pairwise share fewer than $r$ vertices, whose size is
$$J:=|\mathcal H|\sim an^{k-s+r}p_1^{\lambda_1}.$$

Let $\mathcal H=\{H_1^{(1)},\dots,H_1^{(J)}\}$.
 By the player's strategy,
each $H_1^{(i)}$ forms in $G^{(r,s)}_{t_1}$ the flower of an  $(s-r)$-star $S^{(i)}$ with center $C$.
 In order to turn one of them into a copy of the starplus $H$, during Phase 2, it has to be hit  $\lambda_2$ times by the random $r$-sets which, collectively, should be extendable (by the player) to a copy of~$H_2$, the cap of $H$, and thus create a copy of $H$. For simplicity, we assume that the $\lambda_2$ $r$-sets are to be contained in the $H_1^{(i)}$, that is, disjoint from $C$.

To this end, as a preparation, for each  $i=1,\dots, J$, we designate a multi-$r$-graph $M_i$ of $\lambda_2$  $r$-element subsets of $V(S_1^{(i)})$ (with possible, and sometimes necessary, repetitions) such that their suitable extensions to $s$-sets lead to a copy $H_2^{(i)}$ of $H_2$. This can be easily done by selecting (in a template copy of $H_2$) one $r$-element subset of each edge of $H_2$, disjoint from $C$. See Figure~\ref{fig:starplus} for one example; as another, more abstract example, consider an instance where the flower $H_2\supseteq  K_6^{(3)}$ and $r=2$ -- clearly, some of the 15 pairs of the vertices of the clique must appear in $M$ more than once (as there are 20 edges to be covered).

Since the $H^{(i)}_1$'s share pairwise fewer than $r$ vertices, the families $M_i$ are pairwise disjoint, so there is no ambiguity for the player. During Phase 2, whenever a random $r$-set $U_i$ lands on one of the $r$-sets in $M_i$ for some $i$, the player draws the corresponding $s$-edge (within $V(S_1^{(i)})$) and gradually builds a copy of $H$.

 \begin{figure}[ht]
    \centering
    \includegraphics[scale=.8]{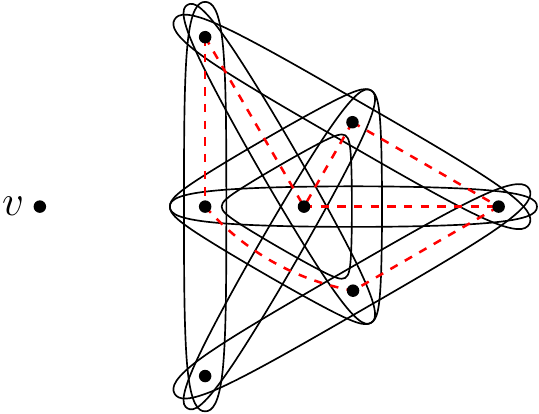}
    \caption{A 3-uniform starplus on 8 vertices with surplus edges forming the Fano plane (the 21 edges containing $v$ are not shown). The dashed-line red pairs indicate a possible choice of the graph $M$ (one of $3^7$). In this particular case, obviously,  $M$ cannot be a multigraph.}
    \label{fig:starplus}
\end{figure}

Next, we move to a detailed description of Phase 2 of the process which lasts $t_2:=t-t_1$ steps.
 Set $R^{(r)}_{t_2}$ to be the random $r$-graph  consisting of the random $r$-sets $U_i$, $i=t_1+1,\dots,t$. Thus, $R^{(r)}_{t_2}$  adds random $r$-edges to a fixed, typical instance of $G^{(r,s)}_{t_1}$. Further, let $I_i=1$ if $M_i\subseteq R_{t_2}$ and $I_i=0$ otherwise. Then, our goal is to prove that a.a.s.\  $Y:=\sum_{i=1}^{J}I_i>0$. Unlike in phase one, we cannot rely on Corollary \ref{L+L}, as $M$ may be a multigraph. Instead, we apply the second moment method, as described in Subsection \ref{prelim},  to $Y$ along with the estimate \eqref{multi}.

 By symmetry, the expectation $\E I_i=\Prob(M_i\subseteq R_{t_2})$ is the same for all $i$.
Denoting by $m_1,\dots,m_{q}$ the  multiplicities of the $r$-sets of vertices in $M$, where $m_1+\cdots+m_{q}=\lambda_2$ and $q=\binom{k-s+r}r$, we obtain, using \eqref{multi} and the definitions of $J$ and $t_2$,
\begin{equation}\label{multiEY}\E Y=J\E I_1\sim J\frac{p_2^{\lambda_2}}{m_1!\cdots m_{q}!}=\begin{cases}
       \Theta\left( \omega^{m}\right) \quad \mbox{ when \eqref{eqn:ell-gen} is strict}    \\
   \Theta\left( \omega^{\lambda_2/2}\right) \mbox{ when there is equality in \eqref{eqn:ell-gen},}                        .                                   \end{cases}
   \end{equation}
    where $m=\lambda_1+\lambda_2$.
In fact, the choice of $\omega_1$ has been driven by this very calculation.

 Again by symmetry,
$$\E(Y(Y-1))=J(J-1)\Prob(I_1=I_2=1).$$
 Since  the families $M_1$ and $M_2$  are edge-disjoint (and the number of common vertices does not matter), similarly as above, applying \eqref{multiEY} to $M_1\cup M_2$, we get the estimate
$$\E(Y(Y-1))\sim J^2\frac{p_2^{2\lambda_2}}{m_1!^2\cdots m_{q}!^2}\sim (\E Y)^2.$$
So, by \eqref{2ndMM} with $\epsilon=1/2$, a.a.s.\ $Y\ge \tfrac12\E Y>0$, which completes the proof. \end{proof}

 \section{Proof of Theorem \ref{thm:general_cliques}} \label{sec:K_6}

We restate Theorem~\ref{thm:general_cliques} below for convenience.

\cliques*

The proof of Theorem \ref{thm:general_cliques} relies on a bold extension of the strategy used in the proof of Theorem~\ref{thm:upper_bound_balanced}.
Since the details are quite technical, we decided to present the argument gently, beginning with the smallest open case, $K_6^{(3)}$, then outline the proof for all cliques $K_k^{(3)}$, $k\ge 6$ and $r=2$, before finally moving to the general case.  For an $r$-graph $H$ and a natural number $m$, we denote by $mH$ the multi-$r$-graph obtained by
replacing every edge of $H$ by $m$ parallel edges.

\subsection{The clique $K_6^{(3)}$}\label{clic}
Although $\tau^{(2)}(K_6^{(3)})$ has been already determined in~\cite{ShD}, we use this special case as a gentle introduction to the general proof of Theorem \ref{thm:general_cliques}. To this end, we prove the following weaker bound.

 \begin{proposition}\label{K6}
$\tau^{(2)}(K_6^{(3)})\le n^{9/5}$
 \end{proposition}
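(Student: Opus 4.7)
The plan is to design a three-phase strategy centered on a fixed 2-vertex anchor set, extending the Phase~1 / Phase~2 approach of Theorem~\ref{thm:upper_bound_balanced} to overcome the density constraint~\eqref{eqn:ell} that forbids a direct application of Corollary~\ref{thm:upper_bound_complete} to $K_6^{(3)}$. Fix $L = \{v_1, v_2\} \subset [n]$ and aim to find a $4$-set $W \subset [n]\setminus L$ such that $L \cup W$ carries a copy of $K_6^{(3)}$ in $G_t^{(2,3)}$. Partition the $20$ edges of this clique according to $|e \cap L|$ into $4$ ``type-$L$'' edges $\{v_1, v_2, w\}$, $12$ ``crossing'' edges $\{v_i, u, u'\}$, and $4$ ``inner'' edges $\{u,u',u''\} \subset W$.

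The $t = \omega n^{9/5}$ rounds (with $\omega = \omega(n)\to\infty$ slowly) are partitioned into three parts $A_1$, $A_2$, $B$ of roughly equal length. Across all parts, whenever $|U_t \cap L| \ge 1$ the player extends $U_t$ with the unique remaining $L$-vertex (if $|U_t \cap L|=1$) or a uniformly random non-$L$ vertex (if $U_t = L$); both choices produce a type-$L$ edge. In parts $A_1$ and $A_2$, whenever $U_t \cap L = \emptyset$, the player extends with $v_1$ (resp.\ $v_2$), thereby building two ``residue graphs'' $R_1, R_2$ on $[n]\setminus L$ of density $p = \Theta(\omega n^{-1/5})$, whose pairs correspond to crossing edges. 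In part $B$, upon receiving $U_t = \{u, u'\} \subset [n]\setminus L$, the player consults a list of ``candidate'' $4$-sets (see below) and, if some candidate $W \supset \{u,u'\}$ still needs an inner edge containing $\{u,u'\}$, responds with the appropriate $u'' \in W\setminus\{u,u'\}$.

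Call $W \in \binom{[n]\setminus L}{4}$ a \emph{candidate} if both $R_1$ and $R_2$ contain $K_4^{(2)}$ on $W$ (supplying all $12$ crossing edges on $L \cup W$) and all $4$ type-$L$ edges through $W$ have been created by the end of the process. Since the three defining events are supported on disjoint subsets of rounds, a first-moment calculation yields $\Theta(\omega^{16} n^{4/5})$ expected candidates. A brief combinatorial check of $K_4$ shows that any four distinct pair-hits in $\binom{W}{2}$ during Phase~$B$ admit a matching of the four triples of $W$ to the hits (the only obstruction, a ``triangle'' miss-subgraph of $K_4$, requires at most three hits), so the player's rule completes all four inner edges of $W$ in that event; each pair in $\binom{W}{2}$ is hit in Phase~$B$ with expected multiplicity $\Theta(\omega n^{-1/5})$, so the probability of $\ge 4$ distinct hits is $\Theta(\omega^4 n^{-4/5})$, giving $\Theta(\omega^{20}) \to \infty$ expected completed copies of $K_6^{(3)}$.

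The main obstacle is the second-moment bound on the completion count $Y$: two candidates $W, W'$ have coupled completion events when $|W \cap W'| \in \{2, 3\}$, because they may share pairs in the residue graphs or, more severely, a common inner triple, mirroring the dependency analysis in the proof of Theorem~\ref{thm:upper_bound_balanced}. Partitioning $\E[Y^2]$ by $|W \cap W'|$, the plan is to verify that disjoint pairs contribute $(\E Y)^2(1+o(1))$ while the overlap contributions each carry strongly negative powers of $n$ coming from the extra shared structure, so that $\E[Y^2] = (1+o(1))(\E Y)^2$ and Chebyshev's inequality gives $Y \ge 1$ a.a.s., producing a copy of $K_6^{(3)}$ in $G_t^{(2,3)}$ and the desired bound $\tau^{(2)}(K_6^{(3)})\le n^{9/5}$.
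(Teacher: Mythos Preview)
Your approach mirrors the paper's closely: both anchor at a fixed $2$-set $L$, partition the $20$ edges of $K_6^{(3)}$ by their intersection with $L$, build the $16$ $L$-touching edges in a first stage and the $4$ inner triples in a second. Your time-split $A_1/A_2$ (extend with $v_1$ vs.\ $v_2$) is functionally equivalent to the paper's multiplicity trick (first hit of a pair $\to n-1$, second hit $\to n$), and your candidate count $\Theta(\omega^{16}n^{4/5})$ matches the paper's count of copies of the rooted multigraph $2K_4^{+1}$.

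The substantive gap is in Phase~$B$. Your random variable $Y$ counts \emph{completed} candidates, but completion depends on the player's tie-breaking when a pair $\{u,u'\}$ lies in several candidates simultaneously---and it will, since candidates may share two or three vertices. Your second-moment sketch treats the completion events for overlapping $W,W'$ as if they were determined by the random hits alone, but serving an inner edge of $W$ may consume a hit that $W'$ also needed; thus $Y$ is not a function of the randomness alone and a Chebyshev argument on it is not well-posed. The paper sidesteps this by first pruning (via a second-moment estimate on the overlap terms $S_g$, $g\ge 2$) to a family $\mathcal G$ of $\Theta(n^{4/5})$ copies that pairwise share at most one vertex outside $L$, then designating for each a fixed $4$-cycle $M_i$ of pairs. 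The $M_i$ are then pairwise edge-disjoint, so there is no ambiguity in Phase~$2$ and the final second-moment argument is on a clean sum of indicator variables. Inserting this pruning step into your argument would close the gap.

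Two smaller points. First, your candidates are declared using type-$L$ edges created ``by the end of the process'', but the player must consult the candidate list \emph{during} Phase~$B$; restrict the definition to type-$L$ edges created before $B$ begins. Second, your matching claim is correct only if ``four distinct pair-hits'' means hits on four \emph{distinct pairs}; four hits landing on the star $\{ad,bd,cd\}$ leave the triple $abc$ unreachable. This does not kill the first moment (good configurations still occur with probability $\Theta(\omega^4 n^{-4/5})$), but the parenthetical justification as written is incorrect.
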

 \begin{proof}
 Consider the following version of the strategy used in the proof of Theorem~\ref{thm:upper_bound_balanced}. In essence, we alter the way the edge set of the target hypergraph $K_6^{(3)}$ is split between Phase~1 and Phase~2. Although $s-r=3-2=1$, we put aside not one but two vertices, say $n-1$ and $n$.
  Set $t=\omega n^{9/5}$ where $\omega =\omega(n) \rightarrow \infty$ with $n$ and  $\omega \le \log{n}$ say, and set $t_1=t_2=t/2$,  $p=t/\binom n2$ and $p_i=t_i/\binom n2$, $i=1,2$.

  In Phase 1 the \emph{first} time a $2$-element subset $U$ of $[n-2]$ is randomly selected,  it is extended by the player to the $3$-edge $U\cup\{n-1\}$, while if $U$ is selected for the second time, it is extended to
$U\cup\{n\}$. In addition, whenever a random pair $U$ contains $n-1$ but not $n$, it is extended to the triple $U\cup\{n\}$.
 So, in $R_{t_1}=G^{(2,2)}_{t_1}$, we are after  double cliques $2K_4$ with vertex sets in $[n-2]$, rooted at $n-1$ with the root connected by a single edge to all four vertices of the double clique (see Figure~\ref{fig:clique}). Let us denote such a graph by $2K_4^{+1}$.  By player's strategy, each copy of $2K_4^{+1}$ in $R^{(2)}_{t_1}$ yields a copy of $K_6^{(3)}-K_4^{(3)}$ in $G^{(2,3)}_{t_1}$.

 \begin{figure}
     \centering
     \includegraphics[scale=.9]{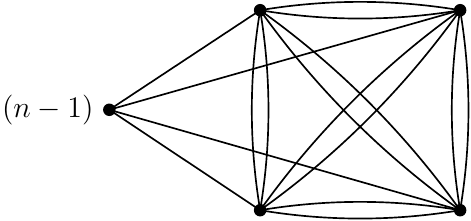}
     \caption{The graph $2K_4^{+1}$}
     \label{fig:clique}
 \end{figure}

 As in the previous proof, we would like to show that a.a.s.\ there are many copies of $2K_4^{+1}$ in $R^{(2)}_{t_1}$ which pairwise share at most one vertex from $[n-2]$. We cannot, however, apply the approach presented before and based, in particular, on Corollary~\ref{L+L}, because now we are counting copies of \emph{rooted multi-graphs}  $2K_4^{+1}$. Let $X$ be the number of copies of $2K_4^{+1}$ in $R_{t_1}$.
By the second moment method, we are going to show that a.a.s.\  $X=\Theta(\E X)=\Theta(n^4p_1^{16})$.  In doing so, we follow the technique described in Section~\ref{prelim}.

By~\eqref{multi},
$$\E X\sim \binom{n-2}4\frac{p_1^{16}}{2^6}=\Theta(n^4p_1^{16})=\Theta(\omega^{16}n^{4/5}).$$ To estimate $\E X(X-1)$, we split all pairs of distinct copies of $2K_4^{+1}$ in $K_n$ according to the size $g$ of their non-rooted vertex-intersection (disregarding the root $n-1$).  Then, by~\eqref{multi},
$$\E X(X-1)\sim\sum_{g=0}^3\binom{n-2}{8-g}\binom{8-g}{(4-g)!^2g!}\frac{p_1^{32-g^2}}{2^{12-\binom g2}}\sim\sum_{g=0}^3 \frac{(2\omega)^{32-g^2}}{(4-g)!^2g!2^{12-\binom g2}} n^{8-g-(32-g^2)/5}.$$
Denoting the four summands above by $S_g$, $g=0,1,2,3$, we see that $S_0\sim(\E X)^2=\Theta(n^{8/5})$, while $S_1=\Theta(n^{4/5})$ and $S_2,S_3=\Theta(n^{2/5})$. In conclusion, by~\eqref{2ndMM}, a.a.s.\ $X=\Theta(\E X)$ as claimed.

The above estimates, in addition, imply that the expected number of pairs of copies of $2K_4^{+1}$ in $R^{(2)}_{t_1}$, which share at least one double edge is $n^{2/5}=o(n^{4/5})$. Hence, removing one copy from each such pair, we obtain a.a.s a family $\mathcal G$ of $\Theta(n^{4/5})$ copies of $2K_4^{+1}$ in $R^{(2)}_{t_1}$, which pairwise share at most one vertex other than the root $n-1$.

To see what happens in Phase 2, consider the double clique contained in one of the copies of $2K_4^{+1}$ belonging to $\mathcal G$, say, on vertices $1,2,3,4$. In order to turn the corresponding copy of $K_6^{(3)}-K_4^{(3)}$ into a copy of $K_6^{(3)}$, one needs to add to it four edges -- the four 3-element subsets of $[4]$.
 This can be facilitated by the following strategy: when during the process $R^{(2)}_{t_2}$ a pair $\{j,j+1\}$ is hit, $j=1,\dots,4$, the player extends it to $\{j,j+1,j+2\}$ (here $5:=1$ and $6:=2$), that is, by adding the next vertex along the cycle $12341$. In the notation of the proof of Theorem~\ref{thm:upper_bound_balanced}, we thus have $M=\{12,23,34,14\}$ and $|\mathcal G|$ designated copies of the 4-cycle $M$ at the end of Phase 1.
By~\eqref{multi}, the expected number of those  of them which will be hit in Phase 2 is $\Theta(n^4p_1^{16}p_2^{4})=\Theta(\omega^2)\to\infty$. Again, by the second moment method (details, similar to those at the end of the proof of Theorem~\ref{thm:upper_bound_balanced}, are omitted) a.a.s.\ at least one of them will be present in $G_t$, completing, per player's strategy, a copy of $K_6^{(3)}$.
\end{proof}

\subsection{Larger 3-uniform cliques ($r=2$)}

Here we prove Theorem~\ref{thm:general_cliques} still in the case  $r=2,s=3$, but for \emph{all} $k$. We singled out this special case, because it is the only one in which we may express the result explicitly. Indeed, as proved in the Appendix (around inequality~\eqref{equiv}), for $s=3$, $r=2$, and every $k$, the smallest integer  which satisfies~\eqref{constraint} is given by~\eqref{elka}, that is,
$$\ell_k:=\ell_k(2,3)=\left\lceil k+\frac32-\sqrt{6k+1/4}\right\rceil.$$

\begin{proposition}\label{Kk} For every $k\ge4$,
$$\tau^{(2)}(K^{(3)}_k)\le n^{r-\frac{k-\ell_k}{\binom k3-\binom{\ell_k}3}},$$
where $\ell_k$ is as above.

 \end{proposition}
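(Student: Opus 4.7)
The plan is to extend the two-phase strategy from the proof of Proposition~\ref{K6} to arbitrary $k \ge 4$, and to identify the optimal $\ell$ via the quadratic inequality derived from constraint~\eqref{constraint}.

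Fix $\ell := \ell_k$ as in the proposition, set $t = \omega \cdot n^{2 - (k-\ell)/(\binom{k}{3} - \binom{\ell}{3})}$ with $\omega(n) \to \infty$ slowly, split $t = t_1 + t_2$ into equal halves, and put $p_i = t_i/\binom{n}{2}$. Fix a set $L \subseteq [n]$ with $|L| = \ell$, playing the role of $\{n-1, n\}$ in the $K_6^{(3)}$ argument. In Phase 1 the player extends each random pair so as to build in $G^{(2,3)}_{t_1}$, on many floating working sets $W \in \binom{[n]\setminus L}{k-\ell}$, the sub-$3$-graph of the target $K_k^{(3)}$ on $L \cup W$ consisting of every triple that involves at least one vertex of $L$. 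The player's extensions are assigned by pair-type ($W$-pairs, $(L, W)$-pairs, and $L$-pairs) with carefully chosen multiplicities, generalising the $2K_4^{+1}$ pattern of Proposition~\ref{K6}. Using the multigraph probability estimate~\eqref{multi} together with a second-moment argument (mirroring the $\E X$ and $\E X(X-1)$ computations for $K_6^{(3)}$), one obtains a.a.s.\ a family $\mathcal{G}$ of candidate $W$'s of the required order, pairwise sharing few vertices.

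In Phase 2, each $W \in \mathcal{G}$ is equipped with a multigraph $M_W$ on $V(W)$ whose edges (with multiplicities) correspond to a pair-cover of the remaining $\binom{k-\ell}{3}$ triples of $K_k^{(3)}$ inside $W$; the player extends every random pair matching a designated slot by a suitable third $W$-vertex. A second application of the second moment method --- as in Phase 2 of Proposition~\ref{K6} --- shows that a.a.s.\ at least one $M_W$ is fully realised in $R^{(2)}_{t_2}$, completing a copy of $K_k^{(3)}$. The choice $\ell = \ell_k$ ensures, via constraint~\eqref{constraint}, that the final second-moment estimate gives $|\mathcal{G}| \cdot \Prob(M_W \text{ fully hit}) \to \infty$. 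The fact that, for $r = 2, s = 3$, the smallest integer $\ell$ satisfying~\eqref{constraint} is $\lceil k + 3/2 - \sqrt{6k + 1/4}\rceil$ is verified in the Appendix around~\eqref{equiv}, after reducing the constraint to an explicit quadratic in $\ell$.

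The principal technical obstacle is the Phase~1 design when $\ell \ge 3$. In that regime, the target $K_k^{(3)}$ on $L \cup W$ contains $\binom{\ell}{3} > 0$ triples lying fully inside the fixed set $L$. At the threshold scale, a random pair landing in $L$ is a rare event (expected number $\binom{\ell}{2} t/\binom{n}{2} \to 0$), so these fully-$L$ triples cannot be built via random $L$-pair hits in the same naive way as for $K_6^{(3)}$. The remedy is to redistribute the player's extension multiplicities across $(L, W)$- and $W$-pairs so that every required Phase~1 triple is covered by a pair-hit of more accessible type, with the total number of ``useful'' Phase~1 hits precisely matching the quantity $T$ appearing in~\eqref{constraint}. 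Establishing sharp second-moment estimates under this intricate redistribution --- in particular controlling the contribution of pairs of copies of the Phase~1 structure that share many vertices --- is where the bulk of the technical work will lie.
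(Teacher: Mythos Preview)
Your overall architecture matches the paper's, but the ``remedy'' you propose for the triples inside $L$ when $\ell\ge 3$ cannot work. A triple $\{a,b,c\}\subseteq L$ is an edge of the target $K_k^{(3)}$, and in the semi-random process with $r=2$, $s=3$ the only way to create this edge is to have the random pair $U_i$ equal to one of the three $2$-subsets of $\{a,b,c\}$ and then let the player add the missing vertex. There is no way to ``redistribute'' this task to an $(L,W)$-pair or a $W$-pair, because any such pair already contains a vertex outside $\{a,b,c\}$ and hence cannot be completed to that triple. Since the expected number of random pairs landing inside the fixed $\ell$-set $L$ over all $t$ rounds is $O(t/n^2)\cdot\binom{\ell}{2}=o(1)$ at the stated threshold, these $\binom{\ell}{3}$ edges simply cannot be built this way, and your Phase~1 design is infeasible for every $k$ with $\ell_k\ge 3$ (that is, for all $k\ge 8$).

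The paper resolves this with an idea you are missing entirely: a preliminary \emph{Phase~0} and an induction on $k$. One first spends $t_0=\omega n^{2-(\ell-\bar\ell)/(\binom{\ell}{3}-\binom{\bar\ell}{3})}$ rounds to build, by the induction hypothesis applied to $K_\ell^{(3)}$, a single copy of $K_\ell^{(3)}$ on a set $L$; the monotonicity of $(k-\ell)/\bigl(\binom{k}{3}-\binom{\ell}{3}\bigr)$ in both variables guarantees $t_0=o(t)$. Only then do Phases~1 and~2 proceed essentially as you describe, building $K_k^{(3)}\setminus K_{k-\ell}^{(3)}$ and then $K_{k-\ell}^{(3)}$. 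A secondary point: the paper does not split $t_1=t_2=t/2$; when~\eqref{constraint} holds with equality one must take $t_1=t/\omega_1$ for a carefully chosen $\omega_1$ so that the critical overlap term $S_2$ in the Phase~1 second-moment computation is $o(\E X)$, and this is precisely where the definition of $\ell_k$ enters.
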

 \begin{proof}[Proof (outline)]
  The proof is by induction on $k$. The need for induction comes from a new phase of the player's strategy, Phase 0, when a copy of the clique $K^{(3)}_{\ell_k}$ is built.
  It follows from Theorem~\ref{thm:upper_bound_balanced} and Proposition~\ref{K6} that the statement is true for $k\le6$ (with $\ell_4=\ell_5=1$ and $\ell_6=2$), so let $k\ge7$. For ease of notation we put $\ell:=\ell_k$.
  To facilitate induction, set
  $$\bar\ell=\left\lceil \ell+\frac32-\sqrt{6\ell+1/4}\right\rceil,$$
   and observe that by the monotonicity of function $f_s(k,\ell)$ (see Appendix, the comment after the proof of~\eqref{mono})
  \begin{equation}\label{less}
  \frac{\ell-\bar\ell}{\binom{\ell}3-\binom{\bar\ell}3}>\frac{k-\ell}{\binom k3-\binom{\ell}3}.
  \end{equation}

   Set
  $$t=\omega n^{2-\frac{k-\ell}{\binom k3-\binom{\ell}3}}\quad\mbox{and}\quad t_0=\omega n^{2-\frac{\ell-\bar\ell}{\binom{\ell}3-\binom{\bar\ell}3}},$$
  for $\omega =\omega(n) \rightarrow \infty$ with $n$ and $\omega \le \log{n}$, say,
  and note that, by~\eqref{less}, $t_0=o(t)$.
  Further, set $t_1=t/\omega_1$, where
  $$\omega_1=\omega^{\frac{\binom k3-\binom{\ell}3-\binom{k-\ell}3/2}{\binom k3-\binom{\ell}3-\binom{k-\ell}3}}.$$
  Finally, set $t_2=t-t_0-t_1$, and  $p_i=t_i/\binom n2$, $i=0,1,2$.

 We split the game into three phases. In the preliminary Phase 0 which lasts $t_0$ steps, we produce a.a.s.\ a copy of $K^{(3)}_\ell$ by the induction's hypothesis applied to $\ell$. Fix one such copy with vertex set $L$.  Without loss of generality, we may assume that $L=[\ell]$.

In Phase 1, which lasts $t_1$ steps,  whenever a 2-element subset $U$ of $[n]\setminus L$ is randomly selected for the $i$-th time, $i=1,\dots,\ell$, we extend it to the triple $U\cup\{i\}$.
Moreover, whenever a 2-element subset $U$ of the form $U=\{u,j\}$, where $u\in[n]\setminus L$ and $j\in L$, is randomly selected for the $i$-th time, $i=1,\dots,\ell-j$, we extend it to the triple $U\cup\{j+i\}$ (see Figure~\ref{fig:clique_phase1}).

\begin{figure}[h]
    \centering
    \includegraphics[scale=.8]{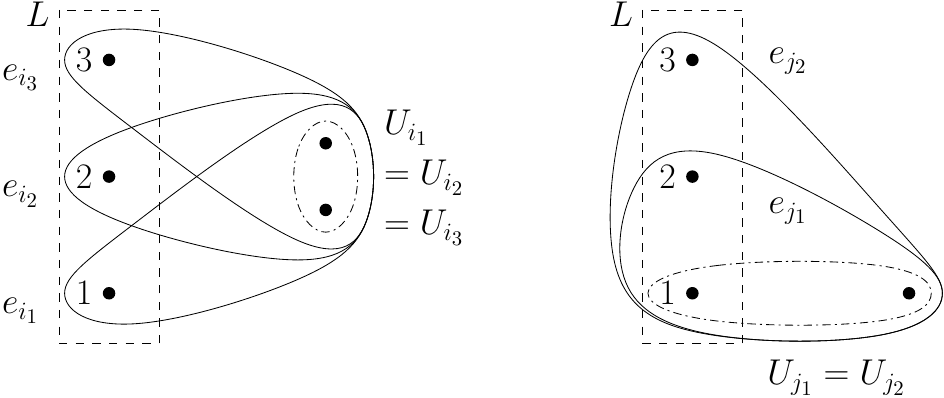}
    \caption{How two different sets $U$ are extended to edges each time they are randomly selected in Phase 1, where $\ell = 3$, $i_1 < i_2 < i_3$ and $j_1 < j_2$.}
    \label{fig:clique_phase1}
\end{figure}

Thus, our goal is to produce many copies of the rooted multi-graph $F$ consisting of a multi-clique  $\ell K^{(2)}_{k-\ell}$ with vertices in $[n]\setminus L$ and, for each vertex $u\in[n]\setminus L$, of $(\ell-1)+\cdots+1=\binom\ell2$ extra edges connecting $u$ with $L$ in such a way that the multiplicity of the edge $uj$ is $\ell-j$, $j=1,\dots,\ell$ (see Figure~\ref{fig:bigger_clique} for an example in the case $k=9$ and $\ell=3$). By the player's strategy, a copy of $F$ in $R^{(2)}_{t_1}$ corresponds to a copy of $K_k^{(3)}-K_{k-\ell}^{(3)}$ in $G_{t_0+t_1}$.

\begin{figure}[h]
    \centering
    \includegraphics[scale=.8]{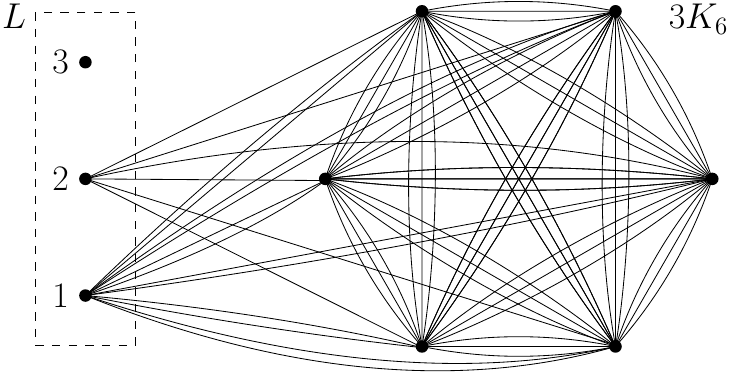}
    \caption{The multigraph $F$ when $k=9$ and $\ell=3$}
    \label{fig:bigger_clique}
\end{figure}

Let $X$ be the number of copies of $F$ in $R^{(2)}_{t_1}$.  Setting
$$h=k-\ell,$$
and noting that
$$\ell\binom{h}2+\binom{\ell}2h=\binom k3-\binom\ell3 -\binom{h}3,$$
we have, by~\eqref{multi},
$$\E X\sim\binom{n-\ell}{h}p_1^{\ell\binom{h}2+\binom{\ell}2h}=
\Theta\left(\left(\frac{\omega}{\omega_1}\right)^{-\frac12\binom h3}n^{\frac{h\binom h3}{\binom k3-\binom{\ell}3}}\right)\to\infty.$$

 By the second moment method, we will soon show that a.a.s.\ there are $\Theta(\E X)$ copies of $F$ at the end of Phase~1. But crucially, we need that, as before, most of them are edge-disjoint (within $[n]\setminus L$), to avoid ambiguity in Phase~2. ``To kill two birds with one stone'', we will estimate quantities $S_g$, $g=0,\dots,h-1$, defined as expected numbers of ordered pairs of copies of $F$ in $R^{(2)}_{t_1}$ which share $g$ vertices outside of $L$. Note that
$\E X(X-1)=\sum_{g=0}^{h-1}S_g$, while $\frac12\sum_{g=2}^{h-1}S_g$ is the expected number of pairs of copies of $F$ which share at least one pair of vertices outside $L$. We aim at showing that
\begin{equation}\label{EE2}
\E X(X-1)\sim S_0\sim(\E X)^2
\end{equation}
and
\begin{equation}\label{S2}
\sum_{g=2}^{h-1}S_g=o(\E X).
\end{equation}
 We have
$$S_g\sim\binom{n-\ell}{2h-g}\binom{2h-g}{h-g,h-g,g}p_1^{\ell\left(2\binom{h}2-\binom g2\right)+\binom\ell2(2h-g)}.$$
Thus,
$$S_0\sim\frac{n^{2h}}{h!^2}p_1^{2\left(\ell\binom{h}2+\binom\ell2h\right)}\sim(\E X)^2.$$
Next,
$$S_1=O\left(n^{2h-1}p_1^{2\left(\ell\binom{h}2+\binom\ell2h\right)-\binom\ell2}\right)=O\left(\frac{(\E X)^2}{np_1^{\binom\ell2}}\right)=o\left((\E X)^2\right),$$
as $np_1^{\binom\ell2}\to\infty$ (since $\ell\le k-2$). Now comes the critical $S_2$. We claim that, by the definition of $\ell$ and~\eqref{constraint},
$$S_2=O\left(n^{2h-2}p_1^{\ell\left(2\binom{h}2-1\right)+\binom\ell2(2h-2)}\right)=o(\E X),$$
equivalently,
$$n^{h-2}p_1^{\ell\left(\binom{h}2-1\right)+\binom\ell2(h-2)}=o(1).$$
Indeed, if there is a strict inequality in~\eqref{constraint}, then the left-hand-side above is of the order $\Theta(n^{-\epsilon})$ for some $\epsilon>0$.
Otherwise the polynomial term disappears and we are looking at
$$\left(\frac{\omega}{\omega_1}\right)^{\binom k3-\binom{\ell}3-\binom{h}3-\ell^2}.$$
However, 
$$\binom k3-\binom{\ell}3-\binom{h}3-\ell^2>0,$$
is equivalent to $\ell\le k-3$ which is true for $k\ge7$ (see Appendix,~\eqref{k-2}). Thus, in this case $S_2=o(\E X)$ as well.

 The same is true for $g=3,\dots,h-1$, which can be demonstrated by induction on~$g$. Assume that for some $2\le g\le h-2$,
$$S_g=\Theta\left(n^{2h-g}p_1^{\ell\left(2\binom{h}2-\binom g2\right)+\binom\ell2(2h-g)}\right)=o(\E X),$$
equivalently,
$$ n^{h-g}p_1^{\ell\left(\binom{h}2-\binom g2\right)+\binom\ell2(h-g)}=o(1).$$
However,  the  equation above can be rewritten as
$$\left(np_1^{\ell(k+g-2)/2}  \right)^{h-g} =o(1),$$
which implies that
$$np_1^{\ell(k+g-2)/2}=o(1).$$  This, in turn, implies that
$$np_1^{\ell(k+(g+1)-2)/2}=o(1)$$
(as, trivially, $p_1=O(1)$), and, consequently,
$$\left(np_1^{\ell(k+(g+1)-2)/2}  \right)^{h-g-1}=o(1),$$
which, by the same token as above, is equivalent to $S_{g+1}=o(\E X)$. Thus, we have proved~\eqref{EE2} and~\eqref{S2}. Consequently, by~\eqref{2ndMM} with, say $\epsilon=1/2$, a.a.s.\ $X=\Theta(\E X)$, and, more importantly, by standard removal, we obtain a.a.s.\ a family $\mathcal F$ of $\Theta(\E X)$ copies of $F$ in $R_{t_1}$ which pairwise share at most one vertex outside $L$. As mentioned earlier, each copy of $F$ yields a copy of $K_k^{(3)}-K_{k-\ell}^{(3)}$ in $G_{t_0+t_1}$.

In Phase 2, a.a.s.\ the player's goal is to extend at least one of them to a copy of $K_k^{(3)}$. This will be possible if a copy of $F$ is hit by the random pairs of $R_{t_2}$ at least  $\binom{h}3$ times and onto appropriate spots. To this end, let $M$ be a multi-graph obtained by selecting one pair of vertices from each triple of $K_{k-\ell}^{(3)}$. For each $F'\in\mathcal F$, let $M'$ be a copy of $M$ on $V(M')\setminus L$. Let $\mathcal M$ be the family of such copies of $M$ and let $Y$ be the number of them present in $R_{t_2}$. Then, by~\eqref{multi},
$$\E(Y)=\Theta\left(\E X\times p_2^{\binom{h}3}\right)=\Theta\left( n^hp_1^{\ell\binom{h}2+\binom{\ell}2h}p_2^{\binom{h}3} \right)=\Theta\left( \frac{\omega^{\binom{k}3-\binom{\ell}3}}{\omega_1^{\binom{k}3-\binom{\ell}3-\binom h3}}  \right)=\Theta\left(\omega^{\tfrac12\binom h3}\right),$$
which goes to $\infty$ as $n$ goes to $\infty$.
 Finally, one can easily show, again by the second moment method, that a.a.s.\ $Y>0$, which means that the player can indeed create a copy of $K_k^{3}$ in $G_t^{(2,3)}$.
 \end{proof}

\subsection{General cliques}\label{genclic}
In this subsection we prove Theorem~\ref{thm:general_cliques} in its full generality. Experienced with the proofs presented in the two previous subsections, we just outline here how to extend them to arbitrary $2\le r<s$. Given $r,s$, $k$, and $\ell:=\ell_k(r,s)$, so far the general scheme for the player has been to build a desired clique in  three big chunks: $K_\ell^{(s)}$ (Phase 0, vacuous when $\ell<s$), $K_{\ell,k-\ell}^{(s)}$ (Phase 1), and $K_{k-\ell}^{(s)}$ (Phase 2).  We basically follow that suit in the general case, with the border between Phases 1 and 2 refined.

\begin{proof}[Proof of Theorem \ref{thm:general_cliques} (outline)] We  proceed by induction on $k\ge s$, with $r$ and $s$ fixed, $2\le r<s$. The base of induction, the case $k=s$ is trivial (then $\ell_s(r,s)=s-r$). Fix $k>s$ and assume the statement is true for all $s\le k'<k$.
Let $\ell=\ell_k(r,s)\ge s-r$ be the smallest integer satisfying~\eqref{constraint} and
$$t=\omega n^{r-\frac{k-\ell}{\binom ks-\binom{\ell}s}}.$$
If $\ell<s$, we set $L=[\ell]$ and skip Phase 0.
Otherwise,  let $\bar\ell$ stand for the smallest integer $\bar\ell$ satisfying~\eqref{constraint} with $k$ and $\ell$ replaced, respectively, by $\ell$ and $\bar\ell$.
Phase 0 will last
$$t_0=\omega n^{r-\frac{\ell-\bar\ell}{\binom{\ell}s-\binom{\bar\ell}s}}$$
steps. Again, $t_0=o(t)$, by the monotonicity of $\frac{k-\ell}{\binom ks-\binom{\ell}s}$.
Since $s\le\ell\le k-r<k$,
by the induction assumption we a.a.s. get and fix a copy  of $K_\ell^{(s)}$ whose vertex set we denote by $L$.
Without loss of generality, set $L=[\ell]$.

Let $H_1$ be the sub-$s$-graph of $K_k^{(s)}$ consisting of all edges with at least $s-r$ but fewer than $s$ vertices in a fixed $\ell$-element vertex subset $L_0$. Further, let $H_2$  be the sub-$s$-graph of $K_k^{(s)}$ consisting of all edges with fewer than $s-r$ vertices in $L_0$ (see Figure~\ref{fig:L0}). Observe that $K_k^{(s)}=K_\ell^{(s)}\cup H_1\cup H_2$. Moreover,
$$|H_1|=\sum_{j=1}^{r}\binom{k-\ell}j\binom{\ell}{s-j}\quad\mbox{and}\quad|H_2|=\sum_{j=r+1}^{s}\binom{k-\ell}j\binom{\ell}{s-j},$$
so $|H_1|+|H_2|=\binom ks-\binom{\ell}s$, as it should. Set $\eta_i=|H_i|$, $i=1,2$, for convenience. If $\ell=k-r$, then $\eta_2=0$ and no second phase is needed. Indeed, we then take $t_1=t-t_0$ and  a.a.s.\ find a copy of $H_1$ in $G^{(r,s)}_{t_1}$ by the second moment method and the player's strategy described below.

\begin{figure}[h]
    \centering
    \includegraphics[scale=0.8]{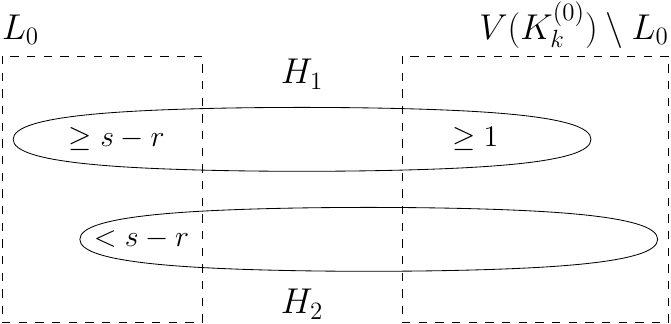}
    \caption{The edges of $K_k^{(s)}$ lying in $H_1$ and $H_2$.}
    \label{fig:L0}
\end{figure}

Otherwise, that is, when $\ell<k-r$,
we take $t_1=t/\omega_1$, where
$$\omega_1= \omega^{\frac{\eta_1+\eta_2/2}{\eta_1}}.$$
In Phase 1, we are going to build many copies of $H_1$ in $R_{t_1}^{(r)}$  with the set $L_0$ mapped onto $L$ (order preserving). In Phase 2 at least one of them will be extended  by a copy of $H_2$ to form an ultimate copy of $K_k^{(s)}$.

The player's strategy in Phase 1 is, thus, as follows. Set $j_0:=\max\{1,s-\ell\}$.
For every $j_0\le j\le r$, assign to each $(s-j)$-element subset $S$ of $L$ one of its $(r-j)$-element subsets and denote it by  $T_S$. Note that for $j=r$ all $(s-r)$-element sets $S$ are assigned the empty set, that is, $T_S=\emptyset$. Given an $(r-j)$-element subset $T$ of $L$, let  $S_T^{(1)},\dots,S_T^{(m_T)}$ be all sets $S\in\binom L{s-j}$ for which $T=T_S$. Observe that for some $T$ we may have $m_T=0$,  and that $\sum_{T\in \binom{L}{r-j}}m_T=\binom{\ell}{s-j}$. Whenever a random $r$-set $U$ is hit for the $i$-th time, $i=1,\dots, m_{U\cap L}$, the player extends it to the $s$-set $U\cup S_{U\cap L}^{(i)}$.

Thus, in order for the player to generate  a copy of $H_1$ rooted at $L$, the random $r$-sets in $R_{t_1}^{(r)}$ must form a copy of the $r$-graph $F$, rooted at $L$, which consists of $k-\ell$ vertices in addition to $L$, and such that every $r$-element subset $e$ of vertices in $F$ has multiplicity $m_{e\cap L}$.
 Note that $F$ has the same number of edges as $H_1$, that is, $|F|=\eta_1$. Let $X$ be the number of copies of $F$ in $R_{t_1}^{(r)}$. Then, by~\eqref{multi}, letting $h = k-\ell$,
 $$\E X=\Theta\left(n^{k-\ell}p_1^{\eta_1}\right)=\Theta\left(\left(\frac{\omega}{\omega_1}\right)^{\eta_1}n^{h-\frac{h\eta_1}{\eta_1+\eta_2}} \right)\to\infty,$$
 because the exponent of $n$ is positive, while $\omega$ and $\omega_1$ are at most logarithmic in $n$.
 Now, by a standard second method one can show that a.a.s.\  $X=\Theta(n^{k-\ell}p_1^{\eta_1})$. Moreover, similarly to the proof of Proposition~\ref{Kk}, one can  show that most of the copies of $F$ share pairwise fewer than $r$ vertices outside $L$. Indeed, setting as before $S_g$, $g=0,\dots,h-1$, for the expected numbers of ordered pairs of copies of $F$ in $R^{(r)}_{t_1}$ which share $g$ vertices outside $L$, we have
 $$S_g\sim\binom{n-\ell}{2h-g}\binom{2h-g}{h-g,h-g,g}p_1^{2\eta_1-\sum_{j=1}^r\binom gj\binom{\ell}{s-j}}.$$
 Hence, $S_0\sim(\E X)^2$ and, for $g\le r$,
 $$S_g\sim\frac{(\E X)^2}{n^gp_1^{\sum_{j=1}^g\binom gj\binom{\ell}{s-j}}}=o((\E X)^2),$$
 since $(h/g)\sum_{j=1}^g\binom gj\binom{\ell}{s-j}<\eta_1$ and, recall, $\binom ks-\binom{\ell}s=\eta_1+\eta_2$. For $g\ge r$, however, we need a stronger bound on $S_g$, namely, that $S_g=o(\E X)$, or equivalently,
 $$T_g:=n^{h-g}p_1^{\sum_{j=1}^r\binom{\ell}{s-j}\left\{\binom hj -\binom gj \right\}}=o(1).$$
 This can be shown by induction on $g$, $g=r,\dots,h-1$. Let $\alpha$ denote the  left hand side of~\eqref{constraint}, that is,
 $$\alpha=k-\ell-r-\frac{k-\ell}{\binom ks-\binom{\ell}{s}}\sum_{j=1}^r\binom\ell{s-j}\left[\binom{k-\ell}j-\binom rj\right].$$
 For $g=r$,
 $$T_r=n^\alpha(\omega/\omega_1)^{\eta_1-\sum_{j=1}^r\binom{\ell}{s-j}\binom rj}.$$ If $\alpha<0$, we are done. Otherwise, observe that $\omega=o(\omega_1)$ while $\eta_1>\sum_{j=1}^r\binom{\ell}{s-j}\binom rj$ as $r<h$, and we are done again. Now assume that for some $r\le g\le h-2$ we have $T_g=o(1)$. To proceed, we rewrite $T_g$ as
 $$T_g=n^{h-g}p_1^{\sum_{j=1}^r\binom{\ell}{s-j}\frac{h-g}{j!}f_j(h,g)},$$
 where
 $$f_j(h,g)=\sum_{i=1}^j\frac{(h)_j-(g)_j}{h-g}$$ is an increasing function of $g$ (see Appendix,~\eqref{mono}).
 As $T_g=o(1)$ implies that
 $$np_1^{\sum_{j=1}^r\binom{\ell}{s-j}\frac{1}{j!}f_j(h,g)}=o(1),$$
 we  also have
 $$\left(np_1^{\sum_{j=1}^r\binom{\ell}{s-j}\frac{1}{j!}f_j(h,g+1)}\right)^{h-g-1}=o(1),$$
 which is equivalent to $T_{g+1}=o(1)$, or $S_{g+1}=o(\E X)$.
 We conclude that at the end of Phase 1, there is a.a.s.\  a family $\mathcal F$ of $\Theta(n^hp_1^{\eta_1})$ copies of $F$ every two of which share fewer than $r$ vertices outside $L$. Every copy $F'$ of $F$, via the player's strategy, corresponds to a copy $H_1'$ of $H_1$ in $G_{t_1}^{(r,s)}$.

 In Phase 2, to turn a copy $H_1'$  into a copy of $K_k^{(s)}$, we still need to place onto it
$$\eta_2=\sum_{j=r+1}^s\binom{k-\ell}j\binom{\ell}{s-j}$$
extra edges forming  a copy $H_2'$ of $H_2$.  These are the edges with at least $r+1$ vertices outside $L$, so the player can create them all from random $r$-edges $U$ of $R_{t_2}^{(r)}$ falling onto the $L$-free part of a copy of $F$. Similarly as in Phase 1, we assign to each $s$-edge $S$ of $H_2$ an $r$-element subset $T_S$ disjoint from $L_0$, and the sets $S_T^{(i)}$, $i=1,\dots, m_T$, are defined as before. This way we obtain a multi-$r$-graph $M$ whose edge multiplicities sum up to $\eta_2$. For each $F'\in\mathcal F$, let $M'$ be a copy of $M$ on $V(M')\setminus L$, and let $\mathcal M$ be the family of all those copies of $M$. Further,  let $Y$ be the number of the copies of $M$ in $\mathcal M$ present in~$R^{(r)}_{t_2}$. Then, by~\eqref{multi},
$$\E Y=\Theta\left(n^hp_1^{\eta_1}p_2^{\eta_2}\right)=\Theta\left(\frac{\omega^{\binom ks-\binom{\ell}s}}{\omega_1^{\eta_1}} \right)\to\infty.$$
Finally, by the second moment method, one can routinely show that a.a.s.\ $Y>0$ and, consequently,  the player will create a copy of $K_k^{(s)}$ by the end of Phase 2. The player's strategy is straightforward again: whenever a random $r$-edge $U\subset V(M')$ is drawn for the $i$-th time, $i=1,\dots, m_U$, the player extends it to the $s$-edge $S_U^{(i)}\subseteq V(F')$  which has been assigned to $U$.
\end{proof}

\section{Proof of Proposition~\ref{prop:rsl}}\label{s6}

In this section we prove Proposition~\ref{prop:rsl}, repeated below for convenience.

\rsl*

Due to the assumption $\ell\le s/2$, non-consecutive edges of an $\ell$-tight $s$-cycle and $s$-path are disjoint. Also, for cycles, every edge has exactly $s-2\ell$ vertices of degree 1 and $2\ell$ vertices of degree 2, while in paths of length at least~2, one can distinguish two edges, each with $s-\ell$ vertices of degree 1 and $\ell$ vertices of degree 2. We refer to them as the \textbf{end-edges} of the path.

Recall  that in the $i$-th step of the semi-random process,  $U_i$ is a random $r$-element subset selected uniformly from all $r$-element subsets of $[n]$. Thus, for any fixed subset $T\subseteq[n]$, by Bernoulli's inequality,
\begin{equation}\label{Tbound}\Prob(U_i\cap T\neq\emptyset)=1-\frac{\binom{n-|T|}r}{\binom nr}\le1-\left(\frac{n-|T|}n\right)^r\le\frac{r|T|}n=O(|T|/n).
\end{equation}
Set $e_i=U_i\cup V_i$ for convenience, and  assume throughout that $s-r\ge\ell$.

\begin{proof}[Proof of  Proposition~\ref{prop:rsl}]

\subsubsection*{Case $H=P_m^{(s,\ell)}$.}

We equip the player with the following strategy. The player will grow just one copy of $P_m^{(s,\ell)}$ beginning with $e_1$ and extending it whenever the next random edge $U_i$ is disjoint from the so far built path. If this happens, then one constructs the set $V_i$, and consequently the whole edge $e_i$, by including in it $\ell$ vertices of degree one belonging to an end-edge of the current path, and any $s-\ell-r$ ``fresh" vertices, that is, not belonging to the current path (see Figure~\ref{fig:path}). Otherwise, the player ``wastes'' the move by doing whatever.
The probability of failure in at least one of the first $m$ steps is, by~\eqref{Tbound},  $O(1/n)=o(1)$ and so the player can complete a copy of $H$ a.a.s.\ in just $m$ steps. Thus, we have $\tau^{(r)}(H)=1$.

\begin{figure}
    \centering
    \includegraphics[scale=.7]{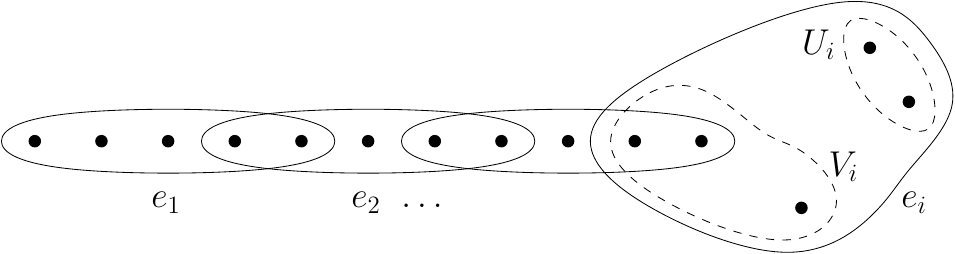}
    \caption{An example of how to build the path $P_m^{(5,2)}$ when $r=2$.}
    \label{fig:path}
\end{figure}

\subsubsection*{Case $H=C_m^{(s,\ell)}$, $s-r\ge2\ell$.}  A similar strategy also works in this case. The player first constructs a path $P:=P_{m-1}^{(s,\ell)}$, as described above. Then, in the $m$-th and final step, provided $U_m\cap V(P)=\emptyset$, the player composes $V_m$ of  $\ell$ vertices of degree one from each end-edge of $P$ and any $s-2\ell$ fresh vertices. The probability of failure is, again, $o(1)$. Thus, $\tau^{(r)}(H)=1$.

\subsubsection*{Case $H=C_m^{(s,\ell)}$, $s-r\le2\ell-1$, lower bound} To establish the desired lower bound on $\tau^{(r)}(H)$, notice that no matter how the game progresses, in order to achieve a copy of $H$, the final edge $e_i$, $i\ge m$, has to connect the two end-edges, say $e'$ and $e''$, of a copy of $P_{m-1}^{(s,\ell)}$ built so far. As the player can only contribute $s-r<2\ell$ vertices to $e_i$, the random set $U_i$ must draw at least $2\ell-(s-r)$ vertices from the one-degree vertices of $e'$ and $e''$.

We consider separately the case $s-r=2\ell-1$, since then $U_i$ needs just one vertex from $e'\cup e''$. Since at time $i$ there are $i-1$ edges, the probability of $U_i$ intersecting at least one of them is, by~\eqref{Tbound}, $O(i/n)$. Summing over all times $i\le t$, the probability that this will happen by time $t$ is $O(t^2/n)=o(1)$ whenever $t=o(n^{1/2})$. This proves that $\tau^{(r)}(H)\ge n^{1/2}$ in this special case.

When $s-r\le2\ell-2$, $U_i$ may hit both, $e'$ and $e''$, so we need to consider all \emph{pairs} of edges.
Since, at any time $i$, there are $\binom{i-1}2<i^2$ pairs of edges, the probability of $U_i$ hitting at least $r-s+2\ell$ vertices from the union of one pair is
$$O\left(i^2\times \frac{n^{r-(2\ell-s+r)}}{n^r}\right)=O\left(\frac{i^2}{n^{r-s+2\ell}}\right).$$ Summing over all times  $i\le t$, the probability that this will happen by time $t$ is $O\left(t^3/n^{r-s+2\ell}\right)$, which is $o(1)$ for any $t=o\left(n^{\frac{r-s+2\ell}3}\right)$. This proves that $\tau^{(r)}(H)\ge n^{\frac{r-s+2\ell}3}$.

\subsubsection*{Case $H=C_m^{(s,\ell)}$, $s-r=2\ell-1$.}  We have $\tau^{(r)}_{H}\ge n^{1/2}$ and want to prove a matching upper bound.
Let $t=\omega n^{1/2}$, where $\omega:=\omega(n)\to\infty$ arbitrarily slowly.
This time we propose a more sophisticated player's strategy which consists of three phases. In Phase 0 we build, a.a.s.\ in just $m-3$ steps  the path $P=P_{m-3}^{(s,\ell)}$ of length $m-3$, as described earlier in this proof. At this point we see already that the smallest cases $m=3$ and $m=4$ are somewhat special. Indeed, for $m=3$ Phase 0 is vacuous, while for $m=4$ it consists of just one step as we take $P=\{e_1\}$.

For $m\ge5$, in each end-edge of $P$ we fix a set of $\ell$ vertices of degree one and call these sets  $L'$ and $L''$. For $m=4$, we take $L',L''\subset e_1$, $L'\cap L''=\emptyset$. For $m=3$, $L'=L''=:L$ is an arbitrary fixed subset of $[n]$ of size $\ell$.

In Phase 1, which lasts $t_1:=\lfloor(t-m+3)/2\rfloor$ steps, the player in alternating time steps creates a set $E'$ of $t':=\lfloor t_1/3\rfloor$   edges containing $L'$, and a set $E''$ of $t'$  edges containing $L''$, whose sets of new vertices are disjoint from $V(P)\setminus(L'\cup L'')$ as well as from each other. (For $m=3$, the player creates $2t'$ edges containing $L$ but otherwise mutually disjoint.)

This is feasible, because the probability that a random $r$-set $U_i$ is \emph{not} disjoint from all previously built edges is, by~\eqref{Tbound}, $O(t_1/n)$. Thus, the expected number of such ``failed'' steps is $O(t_1^2/n) = O(\omega^2)$, and so, a.a.s.\  at least $t_1-\omega^3\ge t'$ sets $U_i$ drawn in Phase 1 are disjoint from all previously built edges. Each time such a $U_i$ arrives, the player extends it to an $s$-edge $e_i$ by including in $V_i$ the set $L'$ for $i$ odd and to $L''$ for $i$ even, while the remaining $\ell-1$ vertices of $V_i$ are to be ``fresh'', that is, not belonging to any previous edge (see Figure~\ref{fig:cycle_phase1}).

\begin{figure}[h]
    \centering
    \includegraphics[scale=0.7]{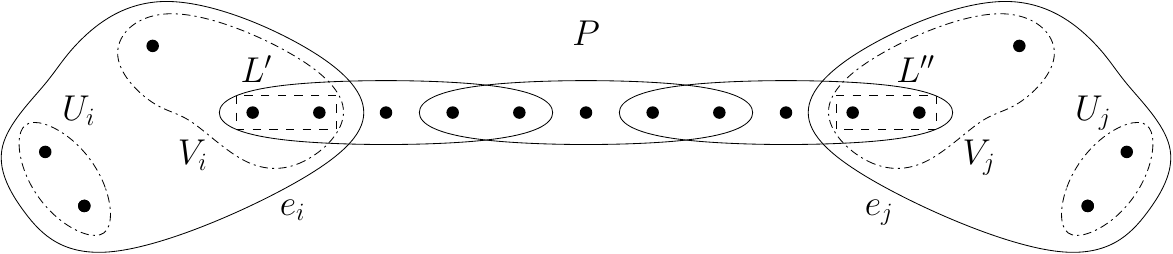}
    \caption{Phase 1 of building $C_6^{5,2}$ for $r=2$, where $i$ is odd and $j$ is even.}
    \label{fig:cycle_phase1}
\end{figure}

In Phase 2, lasting  $t_2=t_1$ steps, the player waits until a random set $U_i$ satisfies $U_i\cap V(P)=\emptyset$ (for $m=3$, $U_i\cap L=\emptyset$) and, for some edges $e'\in E'$ and $e''\in E''$, we have $|U_i\cap e'|=1$ and $|U_i\cap e''|=0$ (note that the existence of $e''$ satisfying the second condition is trivially guaranteed by the disjointness of edges in $E''$, since $t'>r$). Once this happens, a copy of $H$ can be created by including in $V_i$  $\ell-1$ vertices from $e'\setminus L'$ and $\ell$ vertices from $e''\setminus L''$ (see Figure~\ref{fig:cycle_phase2}). Then, $P$ together with edges $e',e_i,e''$ form a copy of $H$.

\begin{figure}[h]
    \centering
    \includegraphics[scale=0.7]{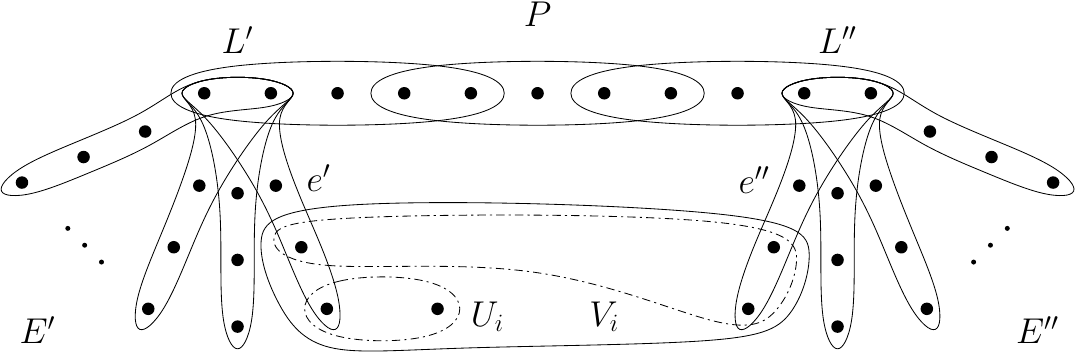}
    \caption{Phase 2 of building $C_6^{5,2}$ for $r=2$.}
    \label{fig:cycle_phase2}
\end{figure}

For ease of calculations, we will bound from below the probability that $U_i$ has this desired property  by adding the constraint that $$U_i\cap\bigcup_{e\in E'\setminus\{e'\}}(e\setminus L')=\emptyset.$$ Setting $z=|V(P)|$ for $m\ge4$ and $z=\ell$ for $m=3$, the probability that $U_i$  satisfies the stronger property is
$$\frac{t'(s-\ell)\binom{n-z-t'(s-\ell))}{r-1}}{\binom nr}=\Theta\left(t/n\right)$$
and so, the probability that it will not happen at all during Phase 2 is, by the chain formula,
$$\left(1-\Theta(t/n)\right)^{t_2}\le \exp\{-\Theta(t^2/n)\}=\exp\{-\Theta(\omega^2)\}=o(1).$$
Hence, a.a.s.\ it will happen at least once during Phase 2 and the player will be able to construct a copy of $H$.


\subsubsection*{Case $H=C_m^{(s,\ell)}$, $s-r\le2\ell-2$.} For $s-r=2\ell-2$ we could basically repeat the above argument. However, for smaller values of $s-r$, due to the threshold being of order $\Omega(n)$, it stops working. The reason is that we cannot have more than $n$ disjoint sets. Therefore, we unify our approach and present a proof valid for all cases when $s-r=2\ell-x$, $2\le x\le \min\{r,\ell\}$ --- the upper bound on $x$ follows from the assumptions $s\ge 2\ell$ and $s\ge r+\ell$. Recall that we have  $\tau^{(r)}_{H}\ge n^{x/3}$ and are after a matching upper bound. Let $t=\omega n^{x/3}$, where $\omega:=\omega(n)\to\infty$ arbitrarily slowly.

Now, we are ready to present player's strategy which a.a.s.\ results in creating a copy of $H$ in $G_t^{(s,r)}$.
Phase 0 is the same as in the case $x=1$. Before Phase~1, in addition to fixing sets $L'$ and $L''$ ($L$ for $m=3$), we partition the vertex set $[n]\setminus V(P)$ ($[n]\setminus L$ for $m=3$) into three sets $W_1,W_2,W_3$ of sizes $n_j=|W_j|\sim n/3$, $j=1,2,3$ (in fact, $n_j=\Theta(n)$ would suffice).

In Phase~1, which lasts $t_1:=\lfloor(t-m+3)/2\rfloor$ steps, every time a set $U_i$ is contained in $W_1$ ($W_2$, resp.) the player extends it by including in $V_i$ the set $L'$ ($L''$, resp.) plus some arbitrary $\ell-x$ vertices of $W_1$ ($W_2$, resp.) (see Figure~\ref{fig:cycle2_phase1}). Clearly, there is a constant $c>0$ such that a.a.s.\ at least $ct$ sets $U_i$ are contained in $W_1$ and the same holds for $W_2$. The two sets of edges obtained that way are denoted $E'$ and $E''$, resp.

\begin{figure}[h]
    \centering
    \includegraphics[scale=0.7]{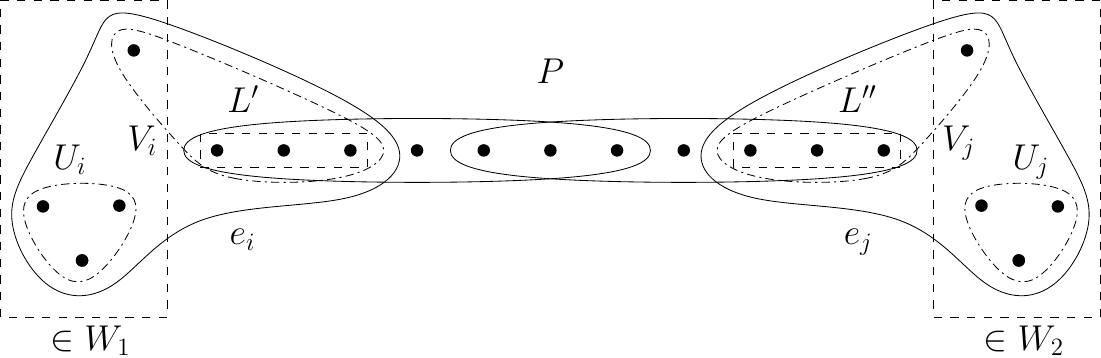}
    \caption{Phase 1 of building $C_5^{7,3}$ for $r=3$.}
    \label{fig:cycle2_phase1}
\end{figure}

In Phase~2, which lasts $t_2=t_1$ steps, the player waits until an edge $U_i$ arrives which, for some $e'\in E'$ and $e''\in E''$, contains exactly $\lfloor x/2\rfloor$ vertices from $e'\setminus L'$, $\lceil x/2\rceil$ vertices from $e'' \setminus L''$, and all remaining $r-x$ vertices belong to $W_3$.
Then the player simply extends $U_i$ by adding $\ell-\lfloor x/2\rfloor$ vertices of $e'\setminus L'$ and  $\ell-\lceil x/2\rceil$ vertices of $e''\setminus L''$. Then $P$ plus the edges $e',e_i,e''$ form a copy of $C_m^{(s,\ell)}$ (Figure~\ref{fig:cycle2_phase2}).

\begin{figure}[h]
    \centering
    \includegraphics[scale=0.7]{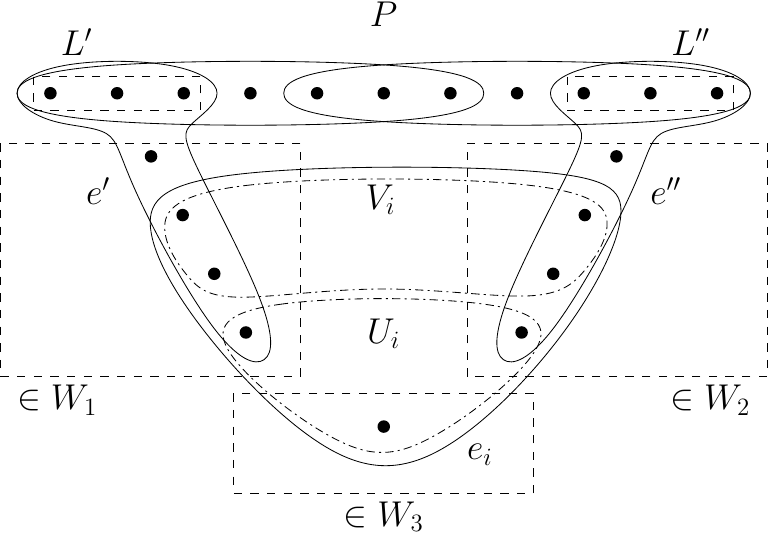}
    \caption{Phase 2 of building $C_5^{7,3}$ for $r=3$.}
    \label{fig:cycle2_phase2}
\end{figure}

By Lemma~\ref{trice}, there are at least $ct\binom r{\lfloor x/2\rfloor}/3=c_1t$  \emph{distinct} $\lceil x/2\rceil$-element sets contained in sets $e'\setminus L'$, $e'\in E'$, and the same is true for
$\lceil x/2\rceil$-element sets in $e''\setminus L''$, $e''\in E''$.
Hence, the probability that $U_i$ has the desired property is at least
$$\frac{(c_1t)^2\times\binom{n_3}{r-x}}{\binom nr}=\Theta(t^2/n^x).$$
Consequently, the probability that it will not happen at all during Phase 2 is, by the chain formula,
   $$\left(1-\Theta(t^2/n^x)\right)^{t_2}\le \exp\{-\Theta(t^3/n^x)\}=\exp\{-\Theta(\omega^3)\}=o(1).$$
This completes the proof of Proposition~\ref{prop:rsl}. \end{proof}

\section{Open Questions}

An interesting question is for what $H$ the weak lower bound from Theorem~\ref{thm:lower_bound_general}, or more generally from Corollary \ref{cor:lower_bound}, yields the correct value of $\tau^{(r)}(H)$. In Theorem~\ref{thm:upper_bound_balanced} we described a broad class of such hypergraphs, but we doubt it is complete.

\begin{problem}
Given $1<r<s$, determine \emph{all} $s$-graphs $H$ for which $\tau^{(r)}(H)=n^{r- \frac1{\mu_G^{(r,s)}}}$.
\end{problem}

A more ambitious goal is to pinpoint the threshold $\tau^{(r)}(H)$ in full generality.
\begin{problem}\label{P2}
Given $1<r<s$, determine $\tau^{(r)}(H)$ for all $s$-graphs $H$.
\end{problem}

So far, beyond Theorem~\ref{thm:upper_bound_balanced}, we succeeded only for $\ell$-tight $s$-uniform paths  and cycles  under, however, quite strong assumptions on $\ell$ and $r$ (see Proposition~\ref{prop:rsl}). Thus,  a first modest task could be to solve Problem~\ref{P2} for the remaining cases of paths and  cycles. Another target class is that of complete $s$-graphs where, except for a handful of small cases, we only have some lower and upper bounds.

\bibliographystyle{abbrv}
\bibliography{refs}

\section*{Appendix}

\subsection*{Edge-balanced implies balanced}

Recall that  an $r$-graph $F$, $r\ge2$, is edge-balanced if for all sub-$r$-graphs  $F'\subset F$ with $e_{F'}\ge1$ we have $g(F')\le g(F)$, where $g(F)=1/r$ if $e_F=1$ and  $g(F)=\frac{e_F-1}{v_F-r}$ if $e_F>1$. Further, we call $F$ balanced if $e_{F'}/v_{F'}\le e_{F}/v_{F}$ for all sub-$r$-graphs $F'$ of $F$.

\begin{claim}\label{ebal->bal}
If an $r$-graph $F$ is edge-balanced, then it is also balanced.
\end{claim}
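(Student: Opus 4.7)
The plan is to prove the claim by direct algebra, using the edge-balancedness hypothesis together with the elementary combinatorial observation that deleting $k$ edges from an $r$-graph deletes at most $rk$ vertices. First I would dispose of the trivial or near-trivial cases. If $e_F=1$, then under the standing convention $V(F)=\bigcup E(F)$ we have $v_F=r$, so any $F'\subseteq F$ with $e_{F'}\ge 1$ must coincide with $F$ and equality holds. If $e_F\ge 2$ and $e_{F'}=1$, then applying edge-balancedness to a single-edge subgraph gives $1/r=g(F')\le g(F)=(e_F-1)/(v_F-r)$, which rearranges to $v_F\le re_F$, i.e., $e_F/v_F\ge 1/r=e_{F'}/v_{F'}$.

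For the main case $e_F,e_{F'}\ge 2$, I would cross-multiply the edge-balancedness inequality to obtain $(e_{F'}-1)(v_F-r)\le(e_F-1)(v_{F'}-r)$ and expand, which after a line of routine algebra rearranges to
\[
e_{F'}v_F-e_Fv_{F'}\;\le\;(v_F-v_{F'})-r(e_F-e_{F'}).
\]
The decisive ingredient is then the purely combinatorial inequality $v_F-v_{F'}\le r(e_F-e_{F'})$, which I would justify as follows: every vertex of $V(F)\setminus V(F')$ lies in some edge of $F$ (no isolated vertices), and it cannot lie in any edge of $F'$ (else it would belong to $V(F')$), so it is contained in one of the $e_F-e_{F'}$ deleted edges, each of which has exactly $r$ vertices. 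Substituting this bound into the previous inequality gives $e_{F'}v_F-e_Fv_{F'}\le 0$, which is precisely $e_{F'}/v_{F'}\le e_F/v_F$.

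I anticipate no serious obstacle: the argument is essentially mechanical once one notices that $(v_F-v_{F'})-r(e_F-e_{F'})\le 0$ is exactly what turns the edge-balancedness inequality into the balancedness inequality. The only point to handle with care is the underlying convention that an $r$-graph has no isolated vertices (equivalently, $V(F)=\bigcup E(F)$), which is needed both in the single-edge case and in the vertex-counting step above; this convention is implicit in the way the densities $g(F)$ and $e_F/v_F$ are used in the main proofs.
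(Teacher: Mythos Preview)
Your proof is correct and follows essentially the same route as the paper's: both rewrite the edge-balancedness inequality as $e_{F'}v_F - e_Fv_{F'} \le (v_F - v_{F'}) - r(e_F - e_{F'})$ and then show the right-hand side is nonpositive. The one substantive difference is in that last step: the paper derives $v_F - v_{F'} \le r(e_F - e_{F'})$ algebraically, by combining the equivalent form $(e_F - e_{F'})/(v_F - v_{F'}) \ge (e_F - 1)/(v_F - r)$ of edge-balancedness with the inequality $(e_F - 1)/(v_F - r) \ge 1/r$ (i.e., $v_F \le r e_F$), whereas you obtain it directly from the combinatorial observation that every lost vertex lies in a lost edge. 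Your route is arguably cleaner, since it uses edge-balancedness only once.

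One small remark: you phrase the absence of isolated vertices as a ``standing convention,'' but in the paper it is not assumed---it is \emph{derived} from edge-balancedness (for $e_F \ge 2$, removing an isolated vertex strictly increases $g$). This does not affect your argument, but it is worth stating that way rather than as an unexplained convention.
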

\proof Let $F$ be an  edge-balanced $r$-graph. Observe that $\delta(F)\ge1$, since otherwise it would not be edge-balanced. Set $v:=v_F$ and $e:=e_F$. Let $F'\subset F$, $e':=e_{F'}\ge1$. Setting also $v':=v_{F'}$, we have by assumption that
\begin{equation}\label{eb}
\frac{e'-1}{v'-r}\le\frac{e-1}{v-r}.
\end{equation}
Our goal is to show that
\begin{equation}\label{b}
\frac{e'}{v'}\le\frac{e}{v}.
\end{equation}
Equation \eqref{eb} is equivalent to
\begin{equation}\label{eb'}
ve'+(v+re)\le v'e+(v+re')
\end{equation}
as well as to
\begin{equation}\label{eb''}
\frac{e-e'}{v-v'}\ge\frac{e-1}{v-r}.
\end{equation}
It follows from \eqref{eb'} that if
\begin{equation}\label{3}
v'+rv\ge v+re',\quad\mbox{or, equivalently,}\quad \frac{e-e'}{v-v'}\ge\frac1r,
\end{equation}
then \eqref{b} must hold. Now, \eqref{3} follows from \eqref{eb''} and
\begin{equation}\label{4}
\frac{e-1}{v-r}\ge\frac1r.
\end{equation}
Finally, \eqref{4} is equivalent to $v\le re$ which is trivially true, as $F$ has no isolated vertices. \qed

\subsection*{Balanced starpluses}

Recall that for $2\le r<s$, an $s$-graph $H$ is \emph{$r$-balanced} if for every subgraph $H'\subset H$ with at least one edge,
$f^{(r)}(H')\le f^{(r)}(H)$, where $f^{(r)}(H)=\frac {|E(H)|}{|V(H)|-s+r}$.
By just comparing the statements of Theorem~\ref{thm:upper_bound_balanced} and Corollary~\ref{cor:lower_bound} it follows that all starpluses satisfying the assumptions of the former statement must be  $r$-balanced. Nevertheless, we provide here a direct proof of this fact in the special case of full $(s,s-r)$-starpluses, which can be viewed as a double check of the correctness of our results.

\begin{proposition}\label{starbal} Let $2\le r<s$ and let $H$ be  a full $(s,s-r)$-starplus on $k$ vertices with excess $\lambda$ satisfying inequality \eqref{eqn:ell}. Then $H$ is $r$-balanced.
\end{proposition}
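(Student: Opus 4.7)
The plan is to fix an arbitrary sub-$s$-graph $H'\subseteq H$ with $v:=v_{H'}\ge s$, set $h:=v-(s-r)$ (so that $r\le h\le N:=k-s+r$), and prove
\begin{equation*}
f^{(r)}(H')=\frac{e_{H'}}{h}\le\frac{\binom{N}{r}+\lambda}{N}=f^{(r)}(H).
\end{equation*}

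First I would bound the numerator: $H'$ can inherit at most $\binom{h}{r}$ star edges (only possible when $C\subseteq V(H')$, in which case there are $h$ non-centre vertices available, and otherwise none) together with at most $\lambda$ excess edges, so $e_{H'}\le\binom{h}{r}+\lambda$. It therefore suffices to show $(\binom{h}{r}+\lambda)/h\le(\binom{N}{r}+\lambda)/N$ for every integer $h\in\{r,\dots,N\}$.

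The core of the argument is then a short discrete-convexity calculation. Setting $F(h):=N\bigl(\binom{h}{r}+\lambda\bigr)-h\bigl(\binom{N}{r}+\lambda\bigr)$, the desired inequality becomes $F(h)\le0$. A direct difference gives
\begin{equation*}
F(h+1)-F(h)=N\binom{h}{r-1}-\binom{N}{r}-\lambda,
\end{equation*}
which is non-decreasing in $h$, so $F$ is convex on $\{r,\dots,N\}$ and hence bounded above by $\max\bigl(F(r),F(N)\bigr)$. Clearly $F(N)=0$, while $F(r)=N-r\binom{N}{r}+(N-r)\lambda$ is exactly the rearrangement of \eqref{eqn:ell} (using $N-r=k-s$), so $F(r)\le0$. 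Hence $F(h)\le0$ throughout, proving the inequality and therefore that $H$ is $r$-balanced.

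There is no real obstacle here: the combinatorial bound on $e_{H'}$ is immediate from the definition of a full starplus, and the convexity of $F$ follows from the monotonicity of $h\mapsto\binom{h}{r-1}$. Notably, the hypothesis \eqref{eqn:ell} is used exactly once, to bound $F(r)$, which suggests it is the sharp threshold for $r$-balance within this family of full $(s,s-r)$-starpluses.
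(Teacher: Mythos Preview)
Your proof is correct and, in its key technical step, cleaner than the paper's. Both arguments start from the same combinatorial bound $e_{H'}\le\binom{h}{r}+\lambda$ (the paper writes this as $\binom{k'-c}{r}+\lambda$ with $c=s-r$), so the reduction to the one-variable inequality $(\binom{h}{r}+\lambda)/h\le(\binom{N}{r}+\lambda)/N$ is common ground.

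Where you diverge is in establishing that inequality. The paper first handles $k'=s$ separately, then for $k'\ge s+1$ rewrites the inequality, invokes the (slightly weakened) bound $\lambda<r\binom{k-c}{r}/(k-s)$, reduces to a falling-factorial inequality $(k'-c)(k-c)_r\ge(k-c)(k'-c)_r$, and verifies it by a three-way case split on $k-k'\in\{1,2,\ge3\}$. Your convexity argument for $F(h)=N\bigl(\binom{h}{r}+\lambda\bigr)-h\bigl(\binom{N}{r}+\lambda\bigr)$ replaces all of this: since $F(h+1)-F(h)=N\binom{h}{r-1}-\binom{N}{r}-\lambda$ is non-decreasing, $F$ is convex on $\{r,\dots,N\}$, hence bounded above by its endpoint values $F(N)=0$ and $F(r)\le0$ (the latter being exactly \eqref{eqn:ell}). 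This avoids both the separate treatment of $k'=s$ and the case analysis, and makes transparent that \eqref{eqn:ell} is precisely the condition $F(r)\le0$. The paper, in exchange, obtains a strict inequality for proper $H'$ with $k'\ge s+1$, but that is not needed for $r$-balance.
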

\proof Let $H'\subset H$, $H'\neq H$. Without loss of generality, we assume that $H'$ is an induced sub-$s$-graph of~$H$. Let $s\le k'<k$ be the number of vertices of $H$ and set $c=s-r$. If $k'=s$, then $e(H')=1$ and so $f^{(r)}(H')=1/r$, while
$$f^{(r)}(H)=\frac{\binom{k-c}r+\lambda}{k-c}\ge\frac{\binom{k-c}r}{k-c}\ge\frac1r ,$$
because the last inequality is equivalent to $\binom{k-c-1}{r-1}\ge1$.

Assume from now on that $k'\ge s+1$. We do not know how many vertices of the center of $H$ belong to $H'$, but nevertheless, the number of edges of $H'$ can be bounded from above by $\binom{k'-c}r+\lambda$. We are going to prove that
\begin{equation}\label{strict}
\frac{\binom{k-c}r+\lambda}{k-c}>\frac{\binom{k'-c}r+\lambda}{k'-c},
\end{equation}
which is, in fact, a bit stronger statement than what is claimed. First note that \eqref{strict} is equivalent to
$$\frac{\binom{k-c}r}{k-c}>\frac{\binom{k'-c}r}{k'-c}+\frac{k-k'}{(k-c)(k'-c)}\lambda.$$ As, by \eqref{eqn:ell}, $\lambda<\tfrac{r\binom{k-c}r}{k-s}$, the above inequality, and thus, \eqref{strict} itself, follows from
$$
\frac{\binom{k-c}r}{k-c}\ge\frac{\binom{k'-c}r}{k'-c}+\frac{(k-k')r\binom{k-c}r}{(k-c)(k'-c)(k-s)}
$$
which, in turn, is equivalent to
\begin{equation}\label{foll}
(k'-c)(k-c)_r\ge(k-c)(k'-c)_r.
\end{equation}
To prove \eqref{foll}, we consider three cases with respect of $k-k'$. Assume first that $k-k'\ge3$ and transform \eqref{foll} to
$$\left(1+\frac{k-k'}{k-c}\right)\cdots\left(1+\frac{k-k'}{k-s+2}\right)\left(1+\frac1{k-s}\right)\ge1+\frac1{k'-s}.$$
Imagining the  left-hand-side completely cross-multiplied, we infer that the above inequality follows from
$$\sum_{i=c}^{s-2}\frac{k-k'}{k-i}+\frac1{k-s}\ge\frac1{k'-s}.$$
As $c=s-r\ge s-2$, the sum above has at least one summand and the L-H-S can be bounded from below by $\tfrac{k-k'}{k'-s+2}$ which, in turn, is at least $\tfrac1{k'-s}$.

When $k-k'=1$, setting $x:=k-s$, \eqref{foll} becomes $(x-1)(x+r)\ge x^2$, equivalently, $x\ge\tfrac r{r-1}$ which is true, because $k\ge k'+1\ge s+2$.
Similarly, when $k-k'=2$, \eqref{foll} becomes $(x-2)(x+r)(x+r-1)\ge x^2(x-1)$. As $r\ge2$, the latter follows from $(x-2)(x+2)(x+1)\ge x^2(x-1)$ which is true for $x\ge3$. But $k\ge k'+2\ge s+3$, so indeed $x=k-s\ge3$. \qed

\subsection*{Properties of $\ell$-tight paths and cycles}
In this subsection we prove some properties of $\ell$-tight paths and cycles used in the main body of the paper. We begin with an observation which follows from the definitions of both structures.

\begin{obs}\label{ob} The following two statements are true:
\begin{itemize}
\item[(i)] Every induced sub-$s$-graph of $P_m^{(s,\ell)}$ is a spanning sub-$s$-graph of a path $P_{m'}^{(s,\ell)}$ for some $m'\le m$.
\item[(ii)] Every induced and \emph{proper} sub-$s$-graph of $C_m^{(s,\ell)}$ is a spanning sub-$s$-graph of a path $P_{m'}^{(s,\ell)}$ for some $m'<m$. \qed
\end{itemize}
\end{obs}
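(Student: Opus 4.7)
The plan is to read off both parts directly from the linear (resp.\ cyclic) structure of the $\ell$-tight path and cycle, after fixing coordinates. For $P:=P_m^{(s,\ell)}$, label the vertices $v_1,\dots,v_N$ with $N=(s-\ell)m+\ell$ along the linear ordering so that the edges are $e_i=\{v_{(i-1)(s-\ell)+1},\dots,v_{(i-1)(s-\ell)+s}\}$ for $i=1,\dots,m$. An induced sub-$s$-graph $H'$ on $V'\subseteq V(P)$ retains precisely those $e_i$ with $e_i\subseteq V'$; let $I\subseteq[m]$ index these surviving edges and set $m':=|I|$.

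For part (i), I would construct an auxiliary $P_{m'}^{(s,\ell)}$ whose vertex set is in bijection with $V'$, so that the edges of $H'$ become a subset of the edges of $P_{m'}^{(s,\ell)}$. The bijection is built by walking along the linear order on $V'$ and identifying the surviving edges, in their induced order on $I$, with the $m'$ edges of $P_{m'}^{(s,\ell)}$. The key structural claim to verify is that any two consecutive surviving edges (under the induced order on $I$) share exactly $\ell$ vertices of $V'$; this is inherited from the overlap structure of $P_m^{(s,\ell)}$ itself, using the hypothesis $\ell\le s/2$ (the regime used in Proposition~\ref{prop:rsl}) under which non-consecutive edges of $P_m^{(s,\ell)}$ are disjoint, so no ``accidental'' extra overlaps can appear.

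For part (ii), I would use that a proper induced sub-$s$-graph of $C:=C_m^{(s,\ell)}$ is missing at least one edge $e_{i_0}\in E(C)$. Cutting the cyclic ordering of $V(C)$ immediately after the last vertex of $e_{i_0}$ produces a linear ordering under which $C\setminus\{e_{i_0}\}$ is realised as a copy of $P_{m-1}^{(s,\ell)}$ on the same vertex set. Since $H'\subseteq C\setminus\{e_{i_0}\}$, it is an induced sub-$s$-graph of this path, and applying part (i) yields a path $P_{m'}^{(s,\ell)}$ with $m'\le m-1<m$ of which $H'$ is a spanning sub-$s$-graph.

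The main delicacy lies in part (i): I must verify that collapsing $V'$ onto the vertex set of $P_{m'}^{(s,\ell)}$ is genuinely well-defined, i.e.\ that the resulting identification of vertices is consistent across all surviving edges and that the overlap pattern of these edges really is that of an $\ell$-tight path on $m'$ edges. Once the bookkeeping is set up carefully, the claim follows by a straightforward induction on the number of ``gaps'' in $I$ (places where an index is absent but both neighbours are present): each gap removal shortens $V'$ by $s-\ell$ vertices and leaves the overlap pattern of the surviving edges unchanged.
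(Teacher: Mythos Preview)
The paper offers no proof for this observation (it is marked with a \qed\ and said to ``follow from the definitions''), so there is nothing to compare your argument against directly. However, your proposal contains a genuine error, and it is worth noting that the statement itself, read literally, is not quite correct either.

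Your central structural claim in part~(i) --- that any two surviving edges which are consecutive in the induced order on $I$ share exactly $\ell$ vertices of $V'$ --- is false. Take $P_3^{(3,1)}$ on vertices $1,\dots,7$ with edges $\{1,2,3\},\{3,4,5\},\{5,6,7\}$, and let $V'=\{1,2,3,5,6,7\}$. The surviving edges are $e_1=\{1,2,3\}$ and $e_3=\{5,6,7\}$, which are disjoint, not overlapping in $\ell=1$ vertex. Your ``gap removal'' induction does not repair this: killing $e_2$ here required removing only the single vertex $4$ from $V'$, not $s-\ell=2$ vertices, so the bookkeeping you propose does not track $|V'|$ correctly. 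In fact this same example shows the observation is literally false as stated: $|V'|=6$, but $P_{m'}^{(3,1)}$ has $2m'+1$ vertices, which is never $6$, so $H'$ cannot be a \emph{spanning} sub-$s$-graph of any $P_{m'}^{(3,1)}$.

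There is a smaller slip in your reduction for part~(ii): when $\ell<s/2$, deleting one edge $e_{i_0}$ from $C_m^{(s,\ell)}$ (keeping all vertices) does not yield a copy of $P_{m-1}^{(s,\ell)}$ on the same vertex set, since $C_m^{(s,\ell)}$ has $(s-\ell)m$ vertices while $P_{m-1}^{(s,\ell)}$ has $(s-\ell)(m-1)+\ell$; the $s-2\ell$ ``private'' vertices of $e_{i_0}$ become isolated.

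What the paper actually \emph{uses} downstream (in Claims~\ref{degen} and~\ref{miu}) is only the weaker and correct fact that for any sub-$s$-graph $H'$ of $P_m^{(s,\ell)}$ (or proper sub-$s$-graph of $C_m^{(s,\ell)}$) with $m'$ edges one has $v_{H'}\ge (s-\ell)m'+\ell=v_{P_{m'}^{(s,\ell)}}$, so that $f^{(r)}(H')\le f^{(r)}(P_{m'}^{(s,\ell)})$. That inequality is what you should aim to prove; it follows directly by writing $V(H')\supseteq e_{i_1}\cup\cdots\cup e_{i_{m'}}$ and bounding the union using $|e_{i_j}\cap e_{i_{j+1}}|\le\ell$.
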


Recall that $d(H)=\max\{\delta(H'):\; H'\subseteq H\}$ is the degeneracy of a hypergraph $H$.
\begin{claim}\label{degen} For all $1\le\ell<s$ and $m\ge1$ we have
 $d(P_m^{(s,\ell)})=1$, while for $m\ge \lfloor (s+1)/(s-\ell)\rfloor$, we have $d(C_m^{(s,\ell)})=\lfloor\frac s{s-\ell}\rfloor$.
 \end{claim}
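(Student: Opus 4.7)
The plan is to reduce both equalities to Observation~\ref{ob} together with the easy fact that for any sub-$s$-graph $H'\subseteq H$, the induced sub-$s$-graph $H''=H[V(H')]$ satisfies $\delta(H')\le\delta(H'')$ (since $E(H')\subseteq E(H'')$), so it suffices to bound $\delta$ on induced sub-$s$-graphs. For $P_m^{(s,\ell)}$, the $s-\ell$ vertices at the beginning of the linear ordering each lie in only the first edge, giving $\delta(P_m^{(s,\ell)})=1$ and thus $d(P_m^{(s,\ell)})\ge 1$. For the matching upper bound, any induced sub-$s$-graph $H''$ of $P_m^{(s,\ell)}$ is, by Observation~\ref{ob}(i), a spanning sub-$s$-graph of some $P_{m'}^{(s,\ell)}$ with $m'\le m$; the first vertex of $P_{m'}^{(s,\ell)}$ has degree $1$ there, hence degree at most $1$ in $H''$, so $\delta(H'')\le 1$.

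For $C_m^{(s,\ell)}$ the first task is to compute $\delta(C_m^{(s,\ell)})$. I would label the $k=m(s-\ell)$ vertices by $\mathbb{Z}_k$ so that the edges are $e_j=\{j(s-\ell),\dots,j(s-\ell)+s-1\}\pmod k$ for $j=0,\dots,m-1$; then $v\in e_j$ iff $j(s-\ell)\in\{v-s+1,\dots,v\}\pmod k$. The ``starting positions'' $j(s-\ell)$ are exactly the $m$ multiples of $s-\ell$ in $\mathbb{Z}_k$, evenly spaced at distance $s-\ell$, so $\deg(v)$ equals the number of such multiples inside a length-$s$ cyclic window, which is either $\lfloor s/(s-\ell)\rfloor$ or $\lceil s/(s-\ell)\rceil$. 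Since the former count is attained for at least one choice of $v$ (unless $(s-\ell)\mid s$, in which case all degrees coincide), we obtain $\delta(C_m^{(s,\ell)})=\lfloor s/(s-\ell)\rfloor$ and hence $d(C_m^{(s,\ell)})\ge\lfloor s/(s-\ell)\rfloor$.

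For the reverse inequality, the key observation is that every \emph{proper} induced sub-$s$-graph $H''\subsetneq C_m^{(s,\ell)}$ is, by Observation~\ref{ob}(ii), a spanning sub-$s$-graph of some $P_{m'}^{(s,\ell)}$ with $m'<m$; the path analysis then gives $\delta(H'')\le 1\le\lfloor s/(s-\ell)\rfloor$. Combining with $\delta(C_m^{(s,\ell)})=\lfloor s/(s-\ell)\rfloor$ yields the desired equality. The only step that is not a direct appeal to Observation~\ref{ob} is the cyclic degree count, which I expect to be the main technical point, although quite short once the ``multiples in a length-$s$ window'' reformulation is in place.
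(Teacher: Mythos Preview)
Your proof is correct and follows essentially the same approach as the paper's: both reduce to Observation~\ref{ob} for (proper) sub-$s$-graphs and to the computation of $\delta(C_m^{(s,\ell)})$ for the full cycle. The paper is more terse, simply asserting $\delta(C_m^{(s,\ell)})=\lfloor s/(s-\ell)\rfloor$ without the explicit cyclic-window argument and without spelling out the reduction to induced subgraphs that you include.
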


\proof For the first statement observe that $P_m^{(s,\ell)}$ as well as every sub-$s$-graph of $P_m^{(s,\ell)}$ contains a vertex of degree 1 (in fact, there are at least two such vertices). As for the cycle, by Observation~\ref{ob}(ii) and the first part of this proof, it suffices to consider only $H'=C_m^{(s,\ell}$. Then, the conclusion follows, because $\delta(C_m^{(s,\ell})=\lfloor\frac s{s-\ell}\rfloor$.
\qed

\medskip

Recall that for $2\le r\le s$ and an $s$-graph $H$ with at least $s$ vertices
	$$f^{(r)}(H)=\frac {|E(H)|}{|V(H)|-s+r}\quad\mbox{and}\quad\mu^{(r)}(H)=\max_{H' \subseteq H,\ |V(H')|\ge s}f^{(r)}(H').$$

\begin{claim}\label{miu} For all $r\ge2$, $1\le\ell<s$,
$$\mu^{(r)}(P_m^{(s,\ell)})=\begin{cases}\frac1r \quad\mbox{for}\quad r\le s-\ell\\\frac{m}{(s-\ell)m+\ell-s+r} \quad\mbox{otherwise}
\end{cases}
$$
and, assuming $m\ge \lfloor (s+1)/(s-\ell)\rfloor$,
$$\mu^{(r)}(C_m^{(s,\ell)})=\max\left\{\frac m{(s-\ell)m-s+r},\frac 1r \right\}=\begin{cases}
\frac1r \quad\mbox{for}\quad r\le s-2\ell \\\frac m{(s-\ell)m-s+r}\quad\mbox{for}\quad r\ge s-\ell.
\end{cases}
$$
In particular, for $r\ge s-\ell$, $C_m^{(s,\ell)}$ is $r$-balanced.
 \end{claim}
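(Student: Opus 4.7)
The plan is to reduce the computation of $\mu^{(r)}$ to a single-parameter family of candidates: the subpaths $P_{m'}^{(s,\ell)}$ for $1 \le m' \le m$, plus (in the cycle case) the cycle itself. Let
\[
g(m') := f^{(r)}(P_{m'}^{(s,\ell)}) = \frac{m'}{(s-\ell)m' + \ell - s + r},
\]
so that $g(1) = 1/r$. The difference $g(m'+1) - g(m')$ has the sign of $\ell - s + r = r - (s-\ell)$, so $g$ is non-decreasing when $r \ge s - \ell$ and non-increasing when $r \le s - \ell$.

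For the path, given any sub-$s$-graph $H' \subseteq P_m^{(s,\ell)}$ with $v_{H'} \ge s$, I would first replace $H'$ by the induced sub-$s$-graph on $V(H')$, which can only add edges and thus only increase $f^{(r)}$. By Observation~\ref{ob}(i) this induced sub-$s$-graph is a spanning sub-$s$-graph of some $P_{m'}^{(s,\ell)}$, hence has at most $m'$ edges on $(s-\ell)m' + \ell$ vertices, giving $f^{(r)}(H') \le g(m')$. The reverse inequality is witnessed by the subpaths themselves, so $\mu^{(r)}(P_m^{(s,\ell)}) = \max_{1\le m' \le m} g(m')$, which by the monotonicity of $g$ equals $1/r$ for $r \le s-\ell$ and $g(m)$ otherwise.

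For the cycle, applying Observation~\ref{ob}(ii) and the path result to proper sub-$s$-graphs gives
\[
\mu^{(r)}(C_m^{(s,\ell)}) = \max\bigl\{g_c(m),\; g(m-1),\; 1/r\bigr\}, \qquad g_c(m) := \frac{m}{(s-\ell)m - s + r}.
\]
The central calculation is the identity
\[
g_c(m) - g(m-1) = \frac{\ell m - (s-r)}{\bigl[(s-\ell)m - s + r\bigr]\cdot\bigl[(s-\ell)(m-2) + r\bigr]},
\]
so $g_c(m) \ge g(m-1)$ iff $\ell m \ge s - r$. For $r \ge s - \ell$ this holds trivially (as $s - r \le \ell \le \ell m$), while for $r \le s - \ell$ an analogous rearrangement shows $g(m-1) \le 1/r$ as soon as $m \ge 2$. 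Either way $g(m-1)$ is dominated by $\max\{1/r, g_c(m)\}$, producing the claimed max formula.

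To derive the two explicit regimes, I would note that $g_c(m) \le 1/r$ rearranges to $m(s-\ell-r) \ge s - r$, and since $(s-r)/(s-\ell-r) \le 2$ when $r \le s - 2\ell$, this holds for $m \ge 2$; hence $\mu^{(r)} = 1/r$ in that regime. Conversely, for $r \ge s - \ell$ the inequality $m(r - s + \ell) \ge r - s$ is immediate, so $g_c(m) \ge 1/r$ and $\mu^{(r)}(C_m^{(s,\ell)}) = g_c(m) = f^{(r)}(C_m^{(s,\ell)})$, demonstrating that the cycle is $r$-balanced. The main obstacle is the bookkeeping across the three regimes of $r$ (below $s - 2\ell$, intermediate, and at least $s - \ell$), together with verifying that the hypothesis $m \ge \lfloor(s+1)/(s-\ell)\rfloor$ delivers $m \ge 2$ wherever the argument demands it.
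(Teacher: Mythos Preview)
Your argument is correct and follows essentially the same route as the paper: reduce via Observation~\ref{ob} to the subpaths $P_{m'}^{(s,\ell)}$ (and, for the cycle, the full cycle), then settle everything by the monotonicity of $g(m')$ in $m'$, governed by the sign of $r-(s-\ell)$. The only cosmetic differences are that the paper checks monotonicity via the derivative of $f(x)=x/((s-\ell)x+\ell-s+r)$ rather than first differences, and it eliminates the subpath term in the cycle formula by the one-line observation $f(m)<g_c(m)$ (smaller denominator) rather than your explicit computation of $g_c(m)-g(m-1)$; the verification of the two displayed regimes is identical.
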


\proof By Observation~\ref{ob} it suffices to consider only those (proper) sub-$s$-graphs of $P_m^{(s,\ell)}$ and $C_m^{(s,\ell)}$ which are $\ell$-tight paths themselves. Thus,
$$\mu^{(r)}(P_m^{(s,\ell)})=\max_{1\le m'\le m}\frac{m'}{(s-\ell)m'+\ell-s+r}$$
and
$$\mu^{(r)}(C_m^{(s,\ell)})=\max\left\{\frac m{(s-\ell)m-s+r},\;\max_{1\le m'<m}\frac{m'}{(s-\ell)m'+\ell-s+r}\right\}.$$

As function $f(x)=\frac x{(s-\ell)x+\ell-s+r}$ has the derivative $f'(x)=\frac{\ell+r-s}{((s-\ell)+\ell-s+r)^2}$, we have
$f(x)\le f(1)= 1/r$ whenever $\ell-s+r\le0$  whereas
$$f(x)\le f(m)=\frac{m}{(s-\ell)m+\ell-s+r}<\frac m{(s-\ell)m-s+r}$$ whenever $\ell-s+r\le0$. This yields the formula for $\mu^{(r)}(P_m^{(s,\ell)})$ and the  left-hand-side formula for $\mu^{(r)}(C_m^{(s,\ell)})$. Finally, notice that for $r\le s-2\ell$,
$$m\ge 2\ge \frac{s-r}{s-r-\ell}$$
which implies that
$$\frac m{(s-\ell)m-s+r}\le\frac1r.$$
\qed

\medskip

Recall that for an $s$-graph $F$, $s\ge2$, its \textbf{density}  $g(F)$ is defined as $1/s$ if $e_F=1$ and  $\frac{e_F-1}{v_F-s}$ if $e_F>1$, and that we call $F$ \textbf{edge-balanced} if for all sub-hypergraphs  $F'\subset F$ with $e_{F'}>0$ the inequality $g(F')\le g(F)$ holds.

\begin{claim}\label{edge-bal}
For all $s\ge2$ and $m\ge s+1$, the tight cycle $C_m^{(s)}$ is edge-balanced.
 \end{claim}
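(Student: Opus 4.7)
The plan is to show that $g(F) \le g(C_m^{(s)}) = (m-1)/(m-s)$ for every sub-$s$-graph $F \subseteq C_m^{(s)}$ with $e_F \ge 1$, via a short case analysis driven by the cyclic structure of $C_m^{(s)}$.

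First I would dispose of two easy extremes. If $e_F = 1$, then $g(F) = 1/s$, and the target inequality $1/s \le (m-1)/(m-s)$ reduces to $m(s-1) \ge 0$, which is trivial. If $F = C_m^{(s)}$, then $g(F)$ equals the claimed upper bound with equality. So I may assume $e_F \ge 2$ and $F \subsetneq C_m^{(s)}$ throughout the rest of the argument.

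Next I would reduce to \emph{induced} sub-$s$-graphs. For $e_F \ge 2$, replacing $F$ by its induced supergraph on $V(F)$ only adds edges, and the map $e \mapsto (e-1)/(v_F-s)$ is strictly increasing in $e$ for $e \ge 2$, so this reduction only strengthens the hypothesis. With $F$ now induced, its vertex set $V := V(F) \subsetneq [m]$ decomposes into maximal arcs of consecutive vertices in the cyclic order of $C_m^{(s)}$. An arc of length $a$ contributes exactly $a-s+1$ edges when $a \ge s$, and none when $a < s$. Deleting short arcs (of length below $s$) decreases $v_F$ without changing $e_F$, so it only increases $g(F)$, and I may further assume every arc has length at least $s$.

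It remains to split on the number $t$ of arcs. If $t = 1$, then $F$ is a tight path; since $e_F \ge 2$ forces arc length $a \ge s+1$, one computes $v_F = a$, $e_F = a - s + 1$, and $g(F) = (a-s)/(a-s) = 1 \le (m-1)/(m-s)$, using $s \ge 2$. If $t \ge 2$, summing the arc-length identities yields $v_F = e_F + t(s-1)$, hence $v_F - s = e_F + t(s-1) - s \ge e_F$ because $t(s-1) - s \ge 2(s-1) - s = s - 2 \ge 0$; therefore $g(F) = (e_F-1)/(v_F-s) < 1 \le (m-1)/(m-s)$. I do not foresee any genuine obstacle; the only subtlety is that the monotonicity reduction to induced subgraphs requires $e_F \ge 2$, which is exactly why the single-edge case is peeled off separately at the start.
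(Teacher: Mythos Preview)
Your proof is correct and follows essentially the same route as the paper: both arguments show that every proper sub-$s$-graph $F$ satisfies $g(F)\le 1<(m-1)/(m-s)=g(C_m^{(s)})$. The paper packages the structural step as Observation~\ref{ob}(ii) (every induced proper sub-$s$-graph of $C_m^{(s)}$ is a spanning sub-$s$-graph of some tight path $P_{m'}^{(s)}$, for which $g=1$), whereas you unpack this by decomposing $V(F)$ into arcs and computing directly; your explicit treatment of the $t\ge 2$ case is a nice alternative to the path-embedding step. One tiny presentational wrinkle: when you pass to the induced supergraph on $V(F)$, you should note that if $V(F)=[m]$ this supergraph is $C_m^{(s)}$ itself (so the inequality is immediate) before asserting $V(F)\subsetneq[m]$.
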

 \proof  Recall that $P_m^{(s)}$ has exactly $m+s-1$ vertices.
We have $g(C_m^{(s)})=\frac{m-1}{m-s}>1$. Moreover, for every induced proper subgraph $F$ of $C_m^{(s)}$ we have, for some $m'<m$,  $g(F)\le g(P_{m'}^{(s)})=\frac{m'-1}{(m'-s+1)-s}=1$ which finishes the proof. \qed

\medskip
In particular, for $m=s+1$, we infer that the clique $K_{s+1}^{(s)}$ is edge-balanced.
Below,  we show that all hyper-cliques are edge-balanced. (We switch from $s$- to $r$-uniformity, as we apply this result to $H_1$ -- see Section~\ref{match}.)
\begin{claim}\label{clicbal}
For all $2\le r<t$, the $r$-uniform clique $K_t^{(r)}$ on $t$ vertices is edge-balanced.
\end{claim}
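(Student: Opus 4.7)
Here is the plan. Write $F = K_t^{(r)}$ and let $F' \subsetneq F$ be any sub-$r$-graph with $e_{F'} \ge 1$, on $v' := v_{F'}$ vertices with $e' := e_{F'}$ edges. I want to show $g(F') \le g(F) = \bigl(\binom{t}{r}-1\bigr)/(t-r)$.

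First I would dispose of the easy case $e' = 1$, in which $g(F') = 1/r$. Here $g(F) \ge 1/r$ is equivalent to $r\binom{t}{r} \ge t$, i.e.\ $t\binom{t-1}{r-1} \ge t$, which is immediate since $t > r$. From now on assume $e' \ge 2$, so in particular $v' > r$ and $g(F') = (e'-1)/(v'-r)$. For fixed $v'$, this expression is maximized by taking as many edges as possible; since $F' \subseteq K_t^{(r)}$ forces $e' \le \binom{v'}{r}$, the worst case is $F' = K_{v'}^{(r)}$. It therefore suffices to prove
\[
\phi(v) := \frac{\binom{v}{r} - 1}{v - r}
\]
is non-decreasing in $v$ for $v \ge r+1$.

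For the monotonicity, the key identity is the telescoping / hockey-stick formula
\[
\binom{v}{r} - 1 \;=\; \binom{v}{r} - \binom{r}{r} \;=\; \sum_{j=r}^{v-1} \left[\binom{j+1}{r} - \binom{j}{r}\right] \;=\; \sum_{j=r}^{v-1} \binom{j}{r-1}.
\]
Thus
\[
\phi(v) \;=\; \frac{1}{v-r} \sum_{j=r}^{v-1} \binom{j}{r-1}
\]
is the arithmetic mean of the $v-r$ numbers $\binom{r}{r-1}, \binom{r+1}{r-1}, \ldots, \binom{v-1}{r-1}$. Since $\binom{j}{r-1}$ is strictly increasing in $j \ge r-1$, the mean strictly increases when we enlarge the range, so $\phi$ is (strictly) increasing. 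Hence $\phi(v') \le \phi(t)$, giving $g(F') \le g(F)$, and $K_t^{(r)}$ is edge-balanced.

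The only mildly delicate step is the reduction to cliques, since one has to separately check the corner case $e' = 1$ (where $g$ is defined by the alternative formula $1/r$ rather than by $(e'-1)/(v'-r) = 0$). Apart from that, the argument is just the hockey-stick identity plus the observation that averaging an increasing sequence over a growing prefix gives a monotone result.
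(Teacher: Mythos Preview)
Your proof is correct and in fact more complete than the paper's own argument. Both proofs reduce the claim to the monotonicity in $v$ of $\phi(v)=\bigl(\binom{v}{r}-1\bigr)/(v-r)$, but they verify this monotonicity differently. The paper compares $\phi(q)$ with $\phi(q-1)$ directly: it rewrites the desired inequality as $(q-1-r)\binom{q}{r}+1\ge(q-r)\binom{q-1}{r}$, drops the $+1$, and after cancellation obtains $q(q-1-r)\ge(q-r)^2$, which it checks holds for $q\ge r+2$ when $r\ge2$. Your route via the hockey-stick identity $\binom{v}{r}-1=\sum_{j=r}^{v-1}\binom{j}{r-1}$ recasts $\phi(v)$ as the running mean of the strictly increasing sequence $\binom{j}{r-1}$, from which monotonicity is immediate and conceptually transparent.

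Beyond this, you also spell out two points the paper leaves implicit: the reduction from arbitrary subgraphs to sub-cliques (maximising $e'$ for fixed $v'$), and the single-edge case $g(F')=1/r$. Your argument thus stands on its own with no hidden steps, whereas the paper's version tacitly assumes the reader supplies those reductions.
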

\proof
We have to show that for every $t\ge q\ge r+2$,
$$\frac{\binom qr-1}{q-r}  \ge \frac{\binom{q-1}{r}-1}{q-1-r}$$
which is equivalent to
$$(q-1-r)\binom qr+1\ge(q-r)\binom{q-1}r.$$
Skipping $+1$ we get a stronger inequality, equivalent, after cancelation, to $q(q-1-r)\ge(q-r)^2$. This one, in turn, is valid, since for $r\ge2$ we have $q\ge r+2\ge\frac{r^2}{r-1}$.
\qed

\subsection*{Proof of inequality \eqref{in1}}

\begin{claim}\label{ineq} If  $s \ge 3$ and $s < k\le 2s-1$, then
$$\binom{k-1}{s} \le \frac{(s-1) \binom{k-1}{s-1}-(k-1)}{k - s}.$$
\end{claim}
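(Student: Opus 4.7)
The plan is to clear denominators and massage the inequality into a form whose only variable ingredients are $s$ and $j := k - s \in \{1, 2, \ldots, s-1\}$, then verify it by splitting on $j = 1$ versus $j \ge 2$.

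First I would multiply both sides by $k-s > 0$ and apply the standard identity $\binom{k-1}{s} = \frac{k-s}{s}\binom{k-1}{s-1}$ to eliminate $\binom{k-1}{s}$. After rearranging, the claim is equivalent to
$$s(k-1) \le \binom{k-1}{s-1}\bigl[s(s-1) - (k-s)^2\bigr].$$
Substituting $j = k - s$, this becomes
$$s(s+j-1) \;\le\; \binom{s+j-1}{s-1}\bigl[s(s-1) - j^2\bigr], \qquad 1 \le j \le s-1. \quad (\star)$$
Note that the bracketed factor is nonnegative throughout this range since $j^2 \le (s-1)^2 \le s(s-1)$.

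For the base case $j = 1$, $(\star)$ reduces to $s^2 \le s\bigl[s(s-1) - 1\bigr] = s^3 - s^2 - s$, equivalently $s(s^2 - 2s - 1) \ge 0$, which holds for all $s \ge 3$.

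For $2 \le j \le s-1$ I would use two rough bounds. On the left, $s(s+j-1) \le 2s(s-1)$ because $j \le s-1$. On the right, the sequence $\binom{s+j-1}{j}$ is nondecreasing in $j \ge 1$ (successive ratios equal $\tfrac{s+j}{j+1} \ge 1$), so $\binom{s+j-1}{s-1} = \binom{s+j-1}{j} \ge \binom{s+1}{2} = \tfrac{s(s+1)}{2}$; and $s(s-1) - j^2 \ge s(s-1) - (s-1)^2 = s-1$. Together these give a right-hand side of at least $\tfrac{s(s-1)(s+1)}{2}$, and the resulting inequality $2s(s-1) \le \tfrac{s(s-1)(s+1)}{2}$ is simply $s \ge 3$.

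The only mild obstacle is that neither of the crude estimates used for $j \ge 2$ is good enough for $j = 1$ (there $\binom{s+j-1}{s-1}$ is only $s$, not $\tfrac{s(s+1)}{2}$), which is why I handle $j = 1$ separately. Once $j \ge 2$, the jump in the binomial factor provides enough slack to absorb the worst case $j = s-1$ uniformly in $s$.
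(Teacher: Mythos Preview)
Your proof is correct and follows essentially the same approach as the paper: both arguments use the identity $\binom{k-1}{s}=\tfrac{k-s}{s}\binom{k-1}{s-1}$, treat the boundary case $k=s+1$ (your $j=1$) separately, and for $k\ge s+2$ invoke a lower bound on the relevant binomial coefficient (the paper uses $\binom{k-2}{s-2}\ge\binom{s}{s-2}$, you use $\binom{s+j-1}{j}\ge\binom{s+1}{2}$) together with $s\ge 3$. Your substitution $j=k-s$ and the consolidated form $(\star)$ make the estimates slightly more transparent, but the two proofs are structurally the same.
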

\begin{proof}
First, note that if $k=s+1$ the equation becomes
$$\binom{s}{s} \le (s-1) \binom{s}{s-1}-(s)$$
which is easily seen to be true.

Thus we may suppose that $k \ge s+2$.
We have
\begin{alignat*}{2}
    & &  k-s &\le s-1   \\
   &\implies \quad &   (k-s)^2 &\le (s-1)^2 \\
   &\implies &  (k-s)^2 &\le  s(s-1) - (s-1) \\
   &\implies &  \frac{k-s}{s} &\le \frac{s-1}{k-s} -
      \frac{s-1}{s(k-s)}  \\
   &\implies &
       \frac{k-s}{s}\binom{k-1}{s-1} &\le
     \frac{s-1}{k-s}\binom{k-1}{s-1} - \frac{s-1}{s(k-s)}\binom{k-1}{s-1} \\
   &\implies &
        \binom{k-1}{s} &\le
        \frac{s-1}{k-s}\binom{k-1}{s-1} -
       \frac{s-1}{s(k-s)}\binom{k-1}{s-1}
\end{alignat*}
so it suffices to show that
\begin{equation*}
    \frac{s-1}{s} \binom{k-1}{s-1} \ge k-1,
\end{equation*}
or equivalently that
\begin{equation}\label{eqn:binomials}
    \frac{s-1}{k-1} \binom{k-1}{s-1} =  \binom{k-2}{s-2}  \ge s.
\end{equation}
Since $\binom{k-2}{s-2} \ge \binom{s}{s-2} = \frac{s(s-1)}{2}$ and $s \ge 3$, equation \eqref{eqn:binomials} holds and the proof is complete.

\end{proof}

\subsection*{Functions $f_s(k,\ell)$ and $\ell_k(r,s)$}

Recall that function  $f_s(k,\ell)=\frac{(k)_s-(\ell)_s}{k-\ell}$ has appeared  in the exponent of the upper bound on the threshold $\tau^{(r)}(K_k^{(s)})$ in Theorem~\ref{thm:general_cliques}, while $\ell:=\ell_k(r,s)$ was the smallest integer $\ell$ such that
\begin{equation}\label{constraint_rep}
k-\ell-r-\frac{k-\ell}{\binom ks-\binom{\ell}{s}}\sum_{j=1}^r\binom\ell{s-j}\left[\binom{k-\ell}j-\binom rj\right]\le0.
\end{equation} (Here, for convenience, we repeat inequality~\eqref{constraint} from Section~\ref{2.3}.)

We first show that
\begin{equation}\label{mono}
f_s(k,\ell)\quad\mbox{ is (strictly) increasing in both variables}.
\end{equation} Indeed, one can show the recurrence
$f_s(k,\ell)=\ell f_{s-1}(k-1,\ell-1)+(k-1)_{s-1}$, which implies, by induction on $s$, that
$$f_s(k,\ell)=\sum_{i=1}^s(\ell)_{s-i}(k-s+i-1)_{i-1}.$$ This form reveals that $f_s(k,\ell)$ is increasing in $\ell$, as well as in $k$.
In particular, it follows that with $\ell:=\ell_k(r,s)$ and $\bar\ell$ being the smallest integer $\bar\ell$ satisfying~\eqref{constraint_rep} with $k$ and $\ell$ replaced, respectively, by $\ell$ and $\bar\ell$, we have $f_s(k,\ell)>f_s(\ell,\bar\ell)$ (since $\ell\le k-r<k$ and $\bar\ell\le\ell-r<\ell$).

 Next, we are going to determine $\ell_k:=\ell_k(2,3)$ explicitly, that is, to prove~\eqref{elka}. For $r=2$ and $s=3$,~\eqref{constraint_rep} becomes
$$k-\ell-2-\frac{6(k-\ell)\ell}{(k)_3-(\ell)_3}\left[\binom{k-\ell}2-1+\frac{\ell-1}2(k-\ell-2)\right]\le0,$$
which, in turn, is equivalent to
\begin{equation}\label{equiv}
\ell^2-(2k+3)\ell+k^2-3k+2\le0.
\end{equation}
 By solving the above quadratic inequality, we obtain
$$\ell_k=\left\lceil k+\frac32-\sqrt{6k+1/4}\right\rceil,$$
the same formula which appears in~\eqref{elka} and in Proposition~\ref{Kk}.

It has been mentioned earlier that $s-r\le \ell_k(r,s)\le k-r$. However, for $\ell_k(2,3)$ the upper bound can be sharpened under a mild assumption on $k$.
 Indeed, dropping the ceiling,
  \begin{equation}\label{k-2}
  \ell_k\le k+\frac52-\sqrt{6k+1/4}\le k-a
  \end{equation}
  for all $k\ge a(a+5)/6+1$. E.g., $\ell_k\le k-3$ for $k\ge5$, while $\ell_k\le k-4$ for $k\ge7$.

It is not easy to compute $\ell_k(r,s)$ in general. We have made an attempt at the next smallest case: $r=2$, $s=4$.
In this case \eqref{constraint_rep} becomes
$$k-\ell-2\frac{4!}{f_4(k,\ell)}\left\{\binom{\ell}3(k-\ell-2)+\binom{\ell}2\left[\binom{k-\ell}2-1\right]\right\}\le0,$$
equivalently
$$k^4+8k\ell^3+20k\ell^2+26k\ell+23k^2+12\le 3\ell^4+8k^2\ell+6k^2\ell^2+8k^3+4\ell^3+5\ell^2+20\ell,$$
which, after setting $x=k-\ell$ becomes
$$4kx^3+8kx^2+21k^2+10kx+20x+12\le 3x^4+20k^2x+5x^2+20k.$$
Assuming  $k$ is large and focusing on the leading terms, $4kx^3$ on the left and $3x^4+20k^2x$ on the right, it is easy to show that $x=O(\sqrt k)$, and so $\ell_k(2,4)=k-\Omega(\sqrt k)$. Indeed, first note that $x=O(k^{2/3})$, since otherwise $4kx^3-3x^4>x^4\gg k^2x$, a contradiction. So, assume that $x=\omega(k)\sqrt k$, where $\omega(k)\to\infty$, but $\omega(k)=O(k^{1/6})$. Now the left-hand-side is $\Theta(\omega(k)^3k^{5/2})$, while the right-hand-side is $\Theta(\omega(k)^4k^2+\omega(k)k^{5/2})$, a contradiction again.

\end{document}